\theoremstyle{plain}
{\theoremstyle{plain}\newtheorem{theorem}{Theorem}[section]}
\newtheorem{cor}{Corollary}[section]
{\theoremstyle{plain}\newtheorem{proposition}{Proposition}[section]}
{\theoremstyle{plain}\newtheorem{lemma}{Lemma}[section]}
{\theoremstyle{plain}}
{\theoremstyle{plain}\newtheorem{rem}{Remark}[section]}
\newtheorem{lem}{Lemma}[section]
\numberwithin{equation}{section}
\begin{document}
\title
{Fisher information bounds and applications to SDEs with small noise}
\author{Nguyen Tien Dung\thanks{Department of Mathematics, VNU University of Science, Vietnam National University, Hanoi, 334 Nguyen
Trai, Thanh Xuan, Hanoi, 084 Vietnam.}\,\,\footnote{Corresponding author. Email: dung@hus.edu.vn}
\and Nguyen Thu Hang\thanks{Department of Mathematics, Hanoi University of Mining and Geology, 18 Pho Vien, Bac Tu Liem, Hanoi, 084 Vietnam.}
}

\date{\today}          
\maketitle
\begin{abstract}In this paper, we first establish general bounds on the Fisher information distance to the class of normal distributions of Malliavin differentiable random variables. We then study the rate of Fisher information convergence in the central limit theorem for the solution of small noise stochastic differential equations and its additive functionals. We also show that the convergence rate is of optimal order.
\end{abstract}
\noindent\emph{Keywords:} Malliavin calculus, Fisher information, Small noise SDEs.\\
{\em 2020 Mathematics Subject Classification:} 60H07, 94A17, 60H10.

\section{Introduction}
Given a random variable $F$ with an absolutely continuous density $p_F$,
the Fisher information of $F$ (or its distribution) is defined by
$$
I(F)= \int_{-\infty}^{+\infty} \frac{p_F'(x)^2} {p_F(x)}dx=E[\rho_F^2(F)],
$$
where $p_F'$ denotes a Radon-Nikodym derivative of $p_F$ and $\rho_F:=p_F'/p_F$ is the score function. Furthermore, the Fisher information distance of $F$ to the normal distribution $N\sim \mathcal{N}(\mu,\sigma^2)$ is defined by
$$I(F\|N):=E\left[\left(\rho_F(F)+\frac{F-\mu}{\sigma^2}\right)^2\right].$$
If the derivative $p_F'$  does not exist then the Fisher information distance is defined to be infinite. We refer the reader to the monograph \cite{Johnson2004} for various properties if $I(F)$ and $I(F\|N).$ Here we note that $I(F\|N)$ is not a distance. However, it dominates many important distances in statistical applications such as the relative entropy (or Kullback-Leibler distance) and the supremum distance between densities. It also dominates the traditional distances such as Kolmogorov distance, total variation distance and Wasserstein distance, etc. Thus $I(F\|N)$  provides a very strong measure of convergence to normality when studying the central limit theorem.

The study of Fisher information convergence has a long history beginning in 1959 with the results of Linnik \cite{Linnik1959}. However, most of the existing results are devoted to the sums of independent random variables. For such sums, the quantitative estimates for the rate of convergence have been well studied, see \cite{Bobkov2014,B-J,Johnson2004,Johnson2020}. Recently, Nourdin \& Nualart \cite{Nourdin1} used the techniques of Malliavin calculus to obtain quantitative Fisher information bounds for the multiple Wiener-It\^o integrals. This is a remarkable contribution to the literature because, for the first time, the quantitative estimates for $I(F\|N)$ were obtained for the random variables $F$ not to be a sum of independent random variables.

In the present paper, our first purpose is to extend the method developed in \cite{Nourdin1} to a general class of Malliavin differentiable random variables. We provide two explicit estimates for the Fisher information distance in  Theorems \ref{lm2} and \ref{9hj}. Our second purpose is to investigate the rate of Fisher information convergence in the central limit theorem for the solution of stochastic differential equations (SDEs) with small noise:
\begin{align}\label{eq3}
X_{\varepsilon,t}=X_0+\int_0^tb(s,X_{\varepsilon,s})ds+\varepsilon\int_0^t\sigma(s,X_{\varepsilon,s})dB_s,\,\,\,t\in [0,T],
\end{align}
where the initial condition $X_0$ is a real number, $b,\sigma:[0,T]\times \mathbb{R}\to \mathbb{R}$ are deterministic functions, $(B_t)_{t\in [0, T]}$ is a standard Brownian motion and $\varepsilon\in (0,1)$ is a small parameter.
Let us consider the ordinary differential equation
\begin{align}\label{eq4}
x_{t}=X_0+\int_0^tb(s,x_s)ds,\mbox{  }t\in [0,T].
\end{align}
It is well known that, in theory of stochastic differential equations with small noise, one of the fundamental problems is to study the convergence of $X_{\varepsilon,t}$ to $x_t$ as $\varepsilon$ tends zero. The convergence can be described via large deviation principle, central limit theorem and moderate deviation principle, etc. (see two monographs \cite{Freidlin2012,Kutoyants1994} for more details). In particular, the central limit theorem results have been discussed by various authors. Under suitable assumptions, for example, it follows from \cite{Spiliopoulos2014,Suo2018} that, as $\varepsilon\to0,$
$$\tilde{X}_{\varepsilon,t}:=\frac{X_{\varepsilon,t}-x_t}{\varepsilon}\,\,\,\text{converges in distribution to}\,\,\,N_t$$
for every $t\in [0,T],$ where $N_t$ is a centered normal random variable with appropriate variance. Naturally, one may wonder whether the convergence also holds in the Fisher information distance (i.e. convergence holds in a stronger sense). Our Theorem \ref{klf9} below not only gives  an affirmative answer, but also provides a convergence rate of order $O(\varepsilon^2).$ In addition, in Theorem \ref{k7zlf}, we show that the rate $O(\varepsilon^2)$ is optimal as $\varepsilon\to0.$ In Theorem \ref{theorem2}, as a further illustration, we also obtain the optimal rate $O(\varepsilon^2)$ of Fisher information convergence for the additive functional of the form
\begin{align}\label{eq1}
	Y_{\varepsilon,t} =  \int_0^tf(s, X_{\varepsilon,s})ds,\mbox{  }t\in [0,T].
\end{align}
The rest of the paper is organized as follows. In Section \ref{sec1}, we recall some fundamental concepts of Malliavin calculus. Section \ref{sec2} contains the abstract results of this paper, two upper bounds on the Fisher information distance are given in Theorems \ref{lm2} and \ref{9hj}. Section \ref{sec2e} is devoted to the study of Fisher information convergence for the solution of (\ref{eq3}) and its additive functional (\ref{eq1}). The main results of this section is formulated and proved in Theorems \ref{klf9}, \ref{k7zlf} and \ref{theorem2}. An useful estimate for the negative moment of Volterra functionals is given in Appendix.
\section{Preliminaries}\label{sec1}
As we have said in the introduction, this paper is concerned with the Fisher information distance via the techniques of Malliavin calculus. For the reader's convenience, let us recall some elements of Malliavin calculus (for more details see \cite{nualartm2}). We suppose that $(B_t)_{t\in [0, T]}$ is defined on a complete probability space $(\Omega ,\mathcal{F},\mathbb{F},P)$, where $\mathbb{F}= (\mathcal{F}_t)_{t \in [0,T]}$ is a natural filtration generated by the Brownian motion $B$. For $h \in L^2[0, T]$, we denote by $B(h)$ the Wiener integral $$B(h)= \int_0^Th(t)dB_t.$$
Let $\mathcal{S}$ denote a dense subset of $L^2(\Omega ,\mathcal{F}, P)$ that consists of smooth random variables of the form
\begin{align}
	F = f(B(h_1),B(h_2),..., B(h_n) ),\label{iik}
\end{align}
where $n \in \mathbb{N}, f \in C_b^\infty(\mathbb{R}^n)$ and $ h_1, h_2, ..., h_n \in L^2[0,T]$. If $F$ has the form (\ref{iik}), we define its Malliavin derivative as the process $DF:= {D_tF, t\in [0,T]}$ given by
$$D_tF =\sum_{k=1}^{n}\frac{\partial f}{\partial x_k}(B(h_1),B(h_2),..., B(h_n))h_k(t).$$
More generally, for each $k\ge 1,$ we can define the iterated derivative operator on a cylindrical random variable by setting
$$ D_{t_1,...,t_k}^{k}F=D_{t_1}...D_{t_k}F. $$
For any $1 \le p,k< \infty$, we denote by $\mathbb{D}^{k,p}$ the closure of $\mathcal{S}$ with respect to the norm
$$||F||_{k,p}^p:= E|F|^p + E\left[\bigg(\int_0^T|D_uF|^2du\bigg)^{\frac{p}{2}}\right]+...+E\left[\bigg(\int_0^T...\int_0^T|D^{k}_{t_1,...,t_k}F|^2dt_1...dt_k\bigg)^{\frac{p}{2}}\right].$$
A random variable $F$ is said to be Malliavin differentiable if it belongs to $\mathbb{D}^{1,2}$. For any $F\in \mathbb{D}^{1,2},$ the Clark-Ocone formula says that
$$ F-E[F]=\int_0^{T}E[D_sF|\mathcal{F}_s]dB_s. $$
In particular, we have
$${\rm Var}(F)=\int_0^{T}E(E[D_sF|\mathcal{F}_s])^2ds=\int_0^{T}E\left[D_sFE[D_sF|\mathcal{F}_s]\right]ds. $$
An important operator in the Malliavin's calculus theory is the divergence operator $\delta$. It is the adjoint of derivative operator $D.$ The domain of $\delta$ is the set of all functions $u\in L^2(\Omega \times [0,T])$ such that
$$ E|\langle DF,u\rangle _{L^2[0,T]}|\le C(u)\|F\|_{L^2(\Omega)}, $$
where $C(u)$ is some positive constant depending on $u.$ In particular, if $u\in Dom\delta ,$ then $\delta(u)$ is characterized by following duality relationships
\begin{align}
\delta(uF)&=F\delta(u)-\left\langle DF,u\right\rangle_{L^2[0,T]}\label{ct2}\\
E[\left\langle DF,u\right\rangle_{L^2[0,T]}]&=E[F\delta (u)] \mbox{ for any } F\in \mathbb{D}^{1,2}.\label{ct3}
\end{align}
It is known that any random variable $F$ in $L^2(\Omega, \mathcal{F},P)$ can be expanded into an orthogonal sum of its Wiener chaos:
$$F=\sum\limits_{n=0}^\infty J_nF,$$
where $J_0=E[F]$ and $J_n$ denotes the projection onto the $n$th Wiener chaos. From this chaos expansion one may define the Ornstein-Uhlenbeck operator $L$ by
$$LF=\sum\limits_{n=0}^\infty-n J_nF.$$
The domain of $L$ is
\begin{equation*}
Dom L=\{F\in L^2(\Omega ):\sum_{n=1}^{\infty }n^{2}E\left|
J_{n}F\right|^{2}<\infty \}=\mathbb{D}^{2,2}\text{.}
\end{equation*}
Moreover, a random variable $F$ belongs to $Dom L$ if and only if $F\in Dom\delta D$ (i.e. $F\in \mathbb{D}^{1,2}$ and $DF\in Dom\delta$), and in this case: $\delta DF=-LF.$ We also define the operator $L^{-1}$ as follows: for every $F\in L^2(\Omega)$ with zero mean, we set
$$L^{-1}F=\sum\limits_{n=1}^\infty-\frac{1}{n} J_nF.$$
Note that, for any $F \in L^2(\Omega )$ with zero mean, we have that $L^{-1} F \in  \mathrm{Dom}L$,
and
\begin{equation*}
LL^{-1} F = F.
\end{equation*}

\section{General Fisher information bounds}\label{sec2}
We first construct the representation formula for the score function.
\begin{lemma}\label{uhuy}
Let $F\in \mathbb{D}^{1,2}$ and $u:\Omega \rightarrow L^2[0,T]$, and
suppose that $\left\langle DF,u\right\rangle_{L^2[0,T]}\neq 0$ a.s. and
$\frac{u}{\left\langle DF,u\right\rangle_{L^2[0,T]} }$ belongs to the domain of $\delta.$ Then the law of $F$ has an absolutely continuous density and its score function $\rho_F$ is given by
\begin{equation}\label{kofkl}
\rho_F(x):=p_F'(x)/p_F(x)=-E\left[\delta \left(
\frac{u}{\left\langle DF,u\right\rangle_{L^2[0,T]}}\right)\big| F=x \right],\,\,\,x\in \mathrm{supp}\,p_F.
\end{equation}
\end{lemma}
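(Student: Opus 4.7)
The plan is to recover the score function of $F$ by matching two expressions for $E[\varphi'(F)]$, where $\varphi$ is a smooth compactly supported test function on $\mathbb{R}$: one produced by Malliavin integration by parts, the other by classical integration by parts on $\mathbb{R}$. The identity $\rho_F = -E[\delta(u/\langle DF,u\rangle_{L^2[0,T]})\mid F]$ will then fall out of a distributional-derivative argument.

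Concretely, I would first use the Malliavin chain rule to write $D(\varphi(F)) = \varphi'(F)\,DF$, so that
\[
\varphi'(F) = \left\langle D(\varphi(F)),\, \frac{u}{\langle DF,u\rangle_{L^2[0,T]}}\right\rangle_{L^2[0,T]},
\]
using the hypothesis $\langle DF,u\rangle_{L^2[0,T]}\neq 0$ a.s. Since $u/\langle DF,u\rangle_{L^2[0,T]}\in\mathrm{Dom}\,\delta$ and $\varphi(F)\in\mathbb{D}^{1,2}$ (as $F\in\mathbb{D}^{1,2}$ and $\varphi$ is smooth and bounded), the duality relation (\ref{ct3}) applies and yields
\[
E[\varphi'(F)] = E\left[\varphi(F)\,\delta\!\left(\frac{u}{\langle DF,u\rangle_{L^2[0,T]}}\right)\right].
\]
Conditioning on $F$, this becomes $E[\varphi(F)\,g(F)]$ with $g(x):=E[\delta(u/\langle DF,u\rangle_{L^2[0,T]})\mid F=x]$, i.e.
\[
\int_{\mathbb{R}}\varphi'(x)\,p_F(x)\,dx \;=\; \int_{\mathbb{R}}\varphi(x)\,g(x)\,p_F(x)\,dx
\]
whenever $F$ already has a density $p_F$; in full generality, $E[\varphi'(F)] = E[\varphi(F)g(F)]$ for every smooth compactly supported $\varphi$.

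Next I would deduce absolute continuity of the law of $F$. Taking $|\varphi|\le 1$ in the identity gives $|E[\varphi'(F)]|\le E|\delta(u/\langle DF,u\rangle_{L^2[0,T]})|<\infty$, so the distribution of $F$ has finite total variation, and standard arguments (see, e.g., the proof of Proposition~2.1.1 in Nualart's book) yield that the law of $F$ admits a density $p_F\in L^1(\mathbb{R})$. Rewriting the identity as
\[
\int_{\mathbb{R}}\varphi'(x)\,p_F(x)\,dx \;=\; -\int_{\mathbb{R}}\varphi(x)\,\bigl(-g(x)\,p_F(x)\bigr)\,dx,
\]
one sees that the distributional derivative of $p_F$ equals $-g\,p_F$, which is in $L^1(\mathbb{R})$. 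Hence $p_F$ has an absolutely continuous version whose Radon--Nikodym derivative satisfies $p_F'(x) = -g(x)\,p_F(x)$ a.e., and dividing by $p_F(x)$ on $\mathrm{supp}\,p_F$ gives exactly formula (\ref{kofkl}).

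The main obstacle is not the computation but the passage from the weak identity $E[\varphi'(F)]=E[\varphi(F)g(F)]$ to actual absolute continuity of $p_F$ together with the a.e.\ identification of $p_F'/p_F$; this requires invoking the distributional characterization of Sobolev $W^{1,1}$ functions on $\mathbb{R}$ and ensuring that $g\,p_F$ is integrable, which follows because $g(F)\in L^1(\Omega)$ by Jensen's inequality applied to the conditional expectation. Once this technical point is handled, the formula for $\rho_F$ is immediate.
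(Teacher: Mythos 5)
Your proof is correct, and its core is the same as the paper's: both hinge on the Malliavin integration by parts $E[\varphi'(F)]=E[\varphi(F)\,\delta(u/\langle DF,u\rangle_{L^2[0,T]})]$ with the weight $u/\langle DF,u\rangle_{L^2[0,T]}$, followed by conditioning on $F$. The only real divergence is in how absolute continuity of $p_F$ is extracted. The paper first invokes Exercise~2.1.3 of Nualart to get the explicit continuous density $p_F(x)=E[\mathbf{1}_{\{F>x\}}\delta(u/\langle DF,u\rangle_{L^2[0,T]})]$, conditions inside that expectation, and reads off $p_F(x)=\int_x^\beta g(y)p_F(y)\,dy$ (with $g$ your conditional expectation), from which absolute continuity and $p_F'=-g\,p_F$ are immediate. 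You instead stay at the level of the weak identity $\int\varphi' p_F=\int\varphi\,g\,p_F$, get existence of a density from the total-variation criterion, and identify the distributional derivative via the $W^{1,1}$ characterization, checking $g\,p_F\in L^1$ by Jensen. Your route is self-contained where the paper's leans on the cited exercise (whose proof is itself the same duality computation applied to indicator approximations), at the cost of a slightly heavier appeal to Sobolev theory on $\mathbb{R}$; the paper's route gives the continuity of $p_F$ and the interval structure of the support essentially for free. Both are complete; no gap.
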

\begin{proof}This lemma is not new. For the sake of completeness, we shall give a proof here. According to Exercise 2.1.3 in \cite{nualartm2}, the law of $F$ has a continuous density
given by
\begin{equation}\label{Fmla2}
p_{F}\left( x\right) =E\left[ \mathbf{1}_{\left\{ F>x\right\} }\delta \left(
\frac{u}{\left\langle DF,u\right\rangle_{L^2[0,T]}}\right) \right],\,\,\, x\in \mathrm{supp}\,p_F.
\end{equation}
Note that the proof of (\ref{Fmla2}) is similar to that of Proposition 2.1.1 in \cite{nualartm2}. Since $F\in\mathbb{D}^{1,2},$ this implies that $\mathrm{supp}\,\rho_F$ is  a closed interval of $\mathbb{R}$ (see Proposition 2.1.7 in \cite{nualartm2}): $\mathrm{supp}\,\rho_F=[\alpha,\beta]$ with $-\infty\leq \alpha<\beta\leq \infty.$ It follows from (\ref{Fmla2}) that
\begin{align*}
p_{F}\left( x\right)&=E\left[ \mathbf{1}_{\left\{ F>x\right\} }E\left[\delta \left(
\frac{u}{\left\langle DF,u\right\rangle_{L^2[0,T]} }\right)\big| F\right]\right]\\
&=\int_x^\beta E\left[\delta \left(
\frac{u}{\left\langle DF,u\right\rangle_{L^2[0,T]} }\right)\big| F=y\right]p_{F}(y)dy.
\end{align*}
So $p_F$ is absolutely continuous and the representation (\ref{kofkl})  is verified. The proof of the lemma is complete.
\end{proof}
We now are ready to establish the Fisher information bounds by using suitable choices of the function $u.$
\begin{theorem}\label{lm2}Let $F\in \mathbb{D}^{2,4}$ and $N$ be a normal random variable with mean $\mu$ and variance $\sigma^2.$ Define
$$\Theta:=\left\langle DF,u \right\rangle_{L^2[0,T]},$$
where $u_t:=E[D_tF|\mathcal{F}_t],\,t\in[0,T].$  Assume that $\Theta\neq 0$ a.s. Then, we have
\begin{align}\label{ufaq}
I(F\|N)\leq c\left(\frac{1}{\sigma^4}(E[F]-\mu)^2+A_F|{\rm Var}(F)-\sigma^2|^2+C_{F}\left(E\|D\Theta\|_{L^2[0,T]}^4\right)^{1/2}\right),
\end{align}
where $c$ is an absolute constant and $A_F,C_{F}$ are positive constants given by
$$A_F:=\frac{1}{\sigma^4}\left(E\|u\|_{L^2[0,T]}^8E|\Theta|^{-8}\right)^{1/4},\,\,\,C_{F}:=A_F+\left(E\|u\|_{L^2[0,T]}^8E|\Theta|^{-16}\right)^{1/4}.$$
\end{theorem}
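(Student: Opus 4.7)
The plan is to apply Lemma \ref{uhuy} with the specific adapted choice $u_t = E[D_tF\mid\mathcal{F}_t]$, for which Clark-Ocone guarantees $\delta(u) = F - E[F]$, and then carefully unpack $\delta(u/\Theta)$ using the divergence product rule \eqref{ct2}. Writing $\delta(u/\Theta) = \Theta^{-1}\delta(u) - \langle D(\Theta^{-1}), u\rangle_{L^2[0,T]}$ and substituting $D(\Theta^{-1}) = -\Theta^{-2}D\Theta$ together with $\delta(u) = F - E[F]$ yields
$$\delta\!\left(\frac{u}{\Theta}\right) = \frac{F-E[F]}{\Theta} + \frac{\langle D\Theta,\,u\rangle_{L^2[0,T]}}{\Theta^2}.$$
Plugging this into the score representation $\rho_F(F) = -E[\delta(u/\Theta)\mid F]$, adding $(F-\mu)/\sigma^2$, and splitting $F-\mu = (F-E[F])+(E[F]-\mu)$, one obtains the three-term identity
$$\rho_F(F)+\frac{F-\mu}{\sigma^2} \;=\; \frac{E[F]-\mu}{\sigma^2} + (F-E[F])\,E\!\left[\frac{\Theta-\sigma^2}{\sigma^2\Theta}\,\Big|\, F\right] - E\!\left[\frac{\langle D\Theta, u\rangle_{L^2[0,T]}}{\Theta^2}\,\Big|\, F\right].$$

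Next I would square this identity, apply $(a+b+c)^2\le 3(a^2+b^2+c^2)$, and use conditional Jensen to strip off the inner conditional expectations. This reduces $I(F\|N)$ to the sum of $3(E[F]-\mu)^2/\sigma^4$, a \emph{variance-type} term $3\sigma^{-4}E[(F-E[F])^2(\Theta-\sigma^2)^2/\Theta^2]$, and a \emph{gradient-type} term $3E[\langle D\Theta,u\rangle_{L^2[0,T]}^2/\Theta^4]$. For the variance-type term the key observation is that, via Clark-Ocone and the duality \eqref{ct3}, $\mathrm{Var}(F) = E[\Theta]$, so one can decompose $\Theta - \sigma^2 = (\Theta - E[\Theta]) + (\mathrm{Var}(F)-\sigma^2)$. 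Squaring and applying $(x+y)^2\le 2(x^2+y^2)$ then separates this into a deterministic-factor piece of order $|\mathrm{Var}(F)-\sigma^2|^2\cdot E[(F-E[F])^2/\Theta^2]$ and a fluctuation piece $E[(F-E[F])^2(\Theta-E[\Theta])^2/\Theta^2]$. The gradient-type term is estimated directly by Cauchy-Schwarz on the inner product, giving $E[\|D\Theta\|_{L^2[0,T]}^2\|u\|_{L^2[0,T]}^2/\Theta^4]$, and then by H\"older with exponents $(2,4,4)$.

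Finally, all the remaining moments are handled by two iterated applications of Clark-Ocone and Burkholder-Davis-Gundy. Since $F-E[F] = \int_0^T u_s\,dB_s$ is an It\^o integral with adapted integrand, BDG yields $E[(F-E[F])^{2p}] \le c_p E\|u\|_{L^2[0,T]}^{2p}$ for $p=2,4$; and applying Clark-Ocone to $\Theta$ and then BDG combined with contractivity of conditional expectation gives $E[(\Theta-E[\Theta])^4] \le c\,E\|D\Theta\|_{L^2[0,T]}^4$, producing the square-root power $(E\|D\Theta\|_{L^2[0,T]}^4)^{1/2}$ that appears in the statement. H\"older splittings with exponents $8$ on $\|u\|$, $8$ or $16$ on $\Theta^{-1}$, and $4$ on $\|D\Theta\|$ then recover the coefficients $A_F$ and $C_F$ exactly. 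The main obstacle is the fluctuation piece: one must juggle the three random factors $(F-E[F])$, $(\Theta-E[\Theta])$, and $\Theta^{-1}$ and still emerge with the prescribed exponents, which is achieved by the Cauchy-Schwarz split $\sqrt{E[(F-E[F])^4/\Theta^4]}\cdot\sqrt{E[(\Theta-E[\Theta])^4]}$ combined with Jensen's inequality to convert fourth moments to square roots of eighth moments.
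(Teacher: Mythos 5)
Your proposal follows the paper's proof of Theorem \ref{lm2} essentially line for line: the same choice $u_t=E[D_tF|\mathcal{F}_t]$, the same unpacking of $\delta(u/\Theta)$ via \eqref{ct2}, the same three-term splitting with conditional Jensen, the same Cauchy--Schwarz/H\"older exponents producing $A_F$ and $C_F$, and the same BDG bound $E|F-E[F]|^8\le CE\|u\|^8$. (The only cosmetic difference is that you decompose $\Theta-\sigma^2=(\Theta-E[\Theta])+(\mathrm{Var}(F)-\sigma^2)$ before applying H\"older, whereas the paper applies H\"older first and decomposes inside $(E|\Theta-\sigma^2|^4)^{1/2}$; both give the stated constants up to the absolute factor $c$.)

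There is, however, one step whose justification as written does not go through: the claim that $E|\Theta-E[\Theta]|^4\le c\,E\|D\Theta\|^4_{L^2[0,T]}$ follows from ``Clark--Ocone, then BDG, combined with contractivity of conditional expectation.'' After Clark--Ocone and BDG you are left with $E\bigl[\bigl(\int_0^T(E[D_s\Theta|\mathcal{F}_s])^2ds\bigr)^2\bigr]$, and pointwise contractivity $(E[X|\mathcal{F}_s])^2\le E[X^2|\mathcal{F}_s]$ only reduces this to $E\bigl[\bigl(\int_0^T E[(D_s\Theta)^2|\mathcal{F}_s]ds\bigr)^2\bigr]$, which does not compare to $E\|D\Theta\|^4$ by any elementary argument (the natural Cauchy--Schwarz/Minkowski estimates run in the wrong direction because the outer square does not commute with the $s$-dependent conditioning). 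To finish along your route you would need Stein's inequality, i.e.\ the $L^4(L^2[0,T])$-boundedness of the diagonal projection $(h_s)\mapsto(E[h_s|\mathcal{F}_s])$, with a constant larger than $1$. The paper avoids this entirely by invoking inequality (3.19) of Nourdin--Peccati--Reinert \cite{Nourdin2009}, whose proof uses the Ornstein--Uhlenbeck integration by parts $E[G^4]=3E[G^2\langle DG,-DL^{-1}G\rangle]$ for centered $G$ rather than Clark--Ocone. Since the target inequality is true and standard, this is a repairable gap, but the tool you named is not sufficient to close it.
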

\begin{proof} For simplicity, we write $\langle.,. \rangle$ instead of $\langle.,. \rangle_{L^2[0,T]}$ and $\|.\|$ instead of $\|.\|_{L^2[0,T]}.$
 In Lemma \ref{uhuy}, we  use $u_t:=E[D_tF|\mathcal{F}_t],\,t\in[0,T].$ Note that, by the Clark-Ocone formula, we have
$$F-E[F]=\int_0^TE[D_tF|\mathcal{F}_t]dB_t=\delta(u).$$
Hence, by the integration-by-part formula (\ref{ct2}), we get the following representation for the score function
\begin{align*}
\rho_F(F)=-E\left[\frac{F-E[F]}{\Theta}+\frac{\left\langle D\Theta,u \right\rangle}{\Theta^2}\Big|F\right].
\end{align*}
As a consequence, we obtain
\begin{align*}
I(F\|N)&=E\left(\rho_F(F)+\frac{F-\mu}{\sigma^2}\right)^2\\
&=E\left(E\left[-\frac{F-E[F]}{\Theta}-\frac{\left\langle D\Theta,u \right\rangle}{\Theta^2}+\frac{F-\mu}{\sigma^2}\Big|F\right]\right)^2\\
&\leq E\left(-\frac{(F-E[F])(\sigma^2-\Theta)}{\sigma^2\Theta}-\frac{\left\langle D\Theta,u \right\rangle}{\Theta^2}+\frac{E[F]-\mu}{\sigma^2}\right)^2.
\end{align*}
 Using the Cauchy-Schwarz and H\"older inequalities we deduce
\begin{align}
&I(F\|N)\leq 3E\left[\frac{(F-E[F])^2(\Theta-\sigma^2)^2}{\sigma^4\Theta^2}\right]+3E\left[\frac{\left\langle D\Theta,u \right\rangle^2}{\Theta^4}\right]+\frac{3}{\sigma^4}(E[F]-\mu)^2\notag\\
&\leq \frac{3}{\sigma^4}\left(E|F-E[F]|^8E|\Theta|^{-8}\right)^{1/4}\left(E|\Theta-\sigma^2|^{4}\right)^{1/2}+3E\left[\frac{\|D\Theta\|^2\|u\|^2 }{|\Theta|^4}\right]+\frac{3}{\sigma^4}(E[F]-\mu)^2\notag\\
&\leq \frac{3}{\sigma^4}\left(E|F-E[F]|^8E|\Theta|^{-8}\right)^{1/4}\left(E|\Theta-\sigma^2|^{4}\right)^{1/2}
+3\left(E\|D\Theta\|^{4}\right)^{1/2}\left(E\|u\|^8E|\Theta|^{-16}\right)^{1/4}\notag\\
&+\frac{3}{\sigma^4}(E[F]-\mu)^2.\label{i1}
\end{align}
We note that $E[\Theta]={\rm Var}(F).$ Then, by the inequality  (3.19) in \cite{Nourdin2009} we have
\begin{align}
E|\Theta-\sigma^2|^{4}&\leq 8|{\rm Var}(F)-\sigma^2|^{4}+8E|\Theta-{\rm Var}(F)|^{4}\notag\\
&\leq 8|{\rm Var}(F)-\sigma^2|^{4}+72E\left[\|D\Theta\|^4\right].\label{i2}
\end{align}
On the other hand, by the Burkholder-Davis-Gundy inequality, there exists $C>0$ such that
\begin{align}
E|F-E[F]|^8&=E\left[\bigg(\int_0^TE[D_tF|\mathcal{F}_t]dB_t\bigg)^8\right]\notag\\
&\leq CE\left[\bigg(\int_0^T|E[D_tF|\mathcal{F}_t]|^2dt\bigg)^4\right]\notag\\
&=CE\|u\|^8.\label{i3}
\end{align}
So we can get the desired estimate (\ref{ufaq}) by inserting (\ref{i2}) and (\ref{i3}) into (\ref{i1}). This completes the proof of the theorem.
\end{proof}
\begin{theorem}\label{9hj}Let $F\in \mathbb{D}^{2,4}$ and $N$ be a normal random variable with mean $\mu$ and variance $\sigma^2.$ Define
$$\Gamma:=\left\langle DF,-DL^{-1}F \right\rangle_{L^2[0,T]}.$$
Assume that $\Gamma\neq 0$ a.s. Then, we have
\begin{align}\label{rde}
I(F\|N)\leq c\left(\frac{1}{\sigma^4}(E[F]-\mu)^2+A_F|{\rm Var}(F)-\sigma^2|^2+C_{F}\left(E\|D\Gamma\|_{L^2[0,T]}^4\right)^{1/2}\right),
\end{align}
where $c$ is an absolute constant and $A_F,C_{F}$ are positive constants given by
$$A_F:=\frac{1}{\sigma^4}\left(E\|DF\|_{L^2[0,T]}^8E|\Gamma|^{-8}\right)^{1/4},\,\,\,C_{F}:=A_F+\left(E\|DF\|_{L^2[0,T]}^8E|\Gamma|^{-16}\right)^{1/4}.$$
\end{theorem}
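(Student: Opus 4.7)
The plan is to mimic the proof of Theorem \ref{lm2}, now taking $u_t = -D_tL^{-1}F$ in Lemma \ref{uhuy} in place of the Clark-Ocone integrand $E[D_tF|\mathcal{F}_t]$. The role previously played by the Clark-Ocone identity $F - E[F] = \delta(u)$ is replaced by the identity $F - E[F] = \delta(-DL^{-1}F)$, which is an immediate consequence of $\delta D = -L$ applied to $L^{-1}F$ (well-defined on centred variables since $LL^{-1} = \mathrm{id}$ on the orthogonal complement of the constants).

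First, Lemma \ref{uhuy} with this choice of $u$ gives
$$\rho_F(F) = -E\!\left[\delta\!\left(\tfrac{-DL^{-1}F}{\Gamma}\right)\Big|F\right].$$
Using the product formula (\ref{ct2}) and $D(1/\Gamma) = -D\Gamma/\Gamma^2$, the divergence unfolds into
$$\rho_F(F) = -E\!\left[\tfrac{F-E[F]}{\Gamma} + \tfrac{\langle D\Gamma,-DL^{-1}F\rangle_{L^2[0,T]}}{\Gamma^2}\Big|F\right].$$
Adding $(F-\mu)/\sigma^2$, regrouping the variance term as $(F-E[F])(\Gamma-\sigma^2)/(\sigma^2\Gamma)$ and splitting off the mean-mismatch $(E[F]-\mu)/\sigma^2$, then squaring and applying Jensen together with $(a+b+c)^2\leq 3(a^2+b^2+c^2)$, reduces the problem to estimating three expectations.

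The second and third pieces are handled by Cauchy--Schwarz and H\"older exactly as in the proof of Theorem \ref{lm2}, producing contributions of the form $(E|F-E[F]|^{8}\,E|\Gamma|^{-8})^{1/4}(E|\Gamma-\sigma^{2}|^{4})^{1/2}$ and $(E\|D\Gamma\|^{4})^{1/2}(E\|DL^{-1}F\|^{8}\,E|\Gamma|^{-16})^{1/4}$. The factor $E|\Gamma-\sigma^{2}|^{4}$ is then reduced via inequality (3.19) of \cite{Nourdin2009}, using the fact that $E[\Gamma]=\mathrm{Var}(F)$, so that it becomes bounded by $8|\mathrm{Var}(F)-\sigma^{2}|^{4}+72E\|D\Gamma\|^{4}$, matching the structure of (\ref{rde}).

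The one genuine difference with Theorem \ref{lm2} is that the natural norm appearing on the right-hand side is $\|DL^{-1}F\|$, whereas the statement is formulated in terms of $\|DF\|$. To bridge this gap I would invoke two standard consequences of Meyer's inequalities: the contraction bound $E\|DL^{-1}F\|_{L^{2}[0,T]}^{p}\leq c_{p}E\|DF\|_{L^{2}[0,T]}^{p}$ and the Poincar\'e-type bound $E|F-E[F]|^{p}\leq c_{p}E\|DF\|_{L^{2}[0,T]}^{p}$ (both for $p\geq 2$). Absorbing the universal constants into $c$ yields (\ref{rde}) with the stated $A_{F}$ and $C_{F}$. The main technical hurdle is simply recording these Meyer-type controls correctly under the sole hypothesis $F\in\mathbb{D}^{2,4}$; once in hand, the derivation is a term-by-term parallel of the one for Theorem \ref{lm2}.
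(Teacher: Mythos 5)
Your proposal is correct and follows essentially the same route as the paper: the choice $u=-DL^{-1}F$, the identity $F-E[F]=\delta(-DL^{-1}F)$, the three-term decomposition with Cauchy--Schwarz/H\"older, the reduction of $E|\Gamma-\sigma^2|^4$ via $E[\Gamma]={\rm Var}(F)$ and inequality (3.19) of \cite{Nourdin2009}, and the passage from $\|DL^{-1}F\|$ to $\|DF\|$ (the paper uses the contraction inequality (3.17) of \cite{Nourdin2009}, i.e.\ Mehler's formula, rather than Meyer's inequalities, but this is immaterial since the constants are absorbed into $c$).
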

\begin{proof} Consider the stochastic process $u_t:=-D_tL^{-1}F,\,t\in[0,T].$  Note that
\begin{equation}\label{ct3xz}
F - E[F]=LL^{-1} F =-\delta DL^{-1} F=\delta(u).
\end{equation}
Hence, with the exact proof of (\ref{i1}), we obtain the following.
\begin{align}
I(F\|N)
&\leq \frac{3}{\sigma^4}\left(E|F-E[F]|^8E|\Gamma|^{-8}\right)^{1/4}\left(E|\Gamma-\sigma^2|^{4}\right)^{1/2}
\notag\\
&+3\left(E\|D\Gamma\|_{L^2[0,T]}^{4}\right)^{1/2}\left(E\|u\|_{L^2[0,T]}^8E|\Gamma|^{-16}\right)^{1/4}+\frac{3}{\sigma^4}(E[F]-\mu)^2.\label{it1}
\end{align}
By (\ref{ct3}) and (\ref{ct3xz}) we have ${\rm Var}(F)=E[F\delta(u)]=E[\Gamma].$ Hence, by the inequality  (3.19) in \cite{Nourdin2009}, we obtain
\begin{align}
E|\Gamma-\sigma^2|^{4}&\leq 8|{\rm Var}(F)-\sigma^2|^{4}+8E|\Gamma-{\rm Var}(F)|^{4}\notag\\
&\leq 8|{\rm Var}(F)-\sigma^2|^{4}+72E\left[\|D\Gamma\|_{L^2[0,T]}^4\right].\label{i2z}
\end{align}
We also have
\begin{equation}\label{i2z1}
E|F-E[F]|^8\leq 7^4E\|DF\|_{L^2[0,T]}^8.
\end{equation}
On the other hand, by the inequality  (3.17) in \cite{Nourdin2009}, we have
\begin{equation}\label{i2z2}
E\|u\|_{L^2[0,T]}^8\leq E\|DF\|_{L^2[0,T]}^8.
\end{equation}
Inserting (\ref{i2z}), (\ref{i2z1}) and (\ref{i2z2}) into (\ref{it1}) yields the desired bound (\ref{rde}). So the proof of the theorem is complete.
\end{proof}

\begin{rem} (i) We have implicitly assumed that the bounds (\ref{ufaq}) and (\ref{rde}) both involve finite quantities, as otherwise there is nothing to prove.

\noindent(ii) In general, the random variables $\Theta$ and $\Gamma$ are different from each other. However, they satisfy the relationship: $E[\Theta|F]=E[\Gamma|F]\,a.s.$ (see Proposition 2.3 in \cite{Dung2015}).

\end{rem}
\begin{rem}(i)  Theorem \ref{lm2} is of interest for the readers who are not used to working with the Ornstein-Uhlenbeck operator. On the other hand, comparing with Theorem \ref{lm2}, the advantage of Theorem \ref{9hj} lies in the fact that it can be extended to a more general setting: Suppose that $\mathfrak{H}$ is a real separable Hilbert space with scalar product denoted by $\langle.,.\rangle_\mathfrak{H}.$ We denote by $W = \{W(h) : h \in \mathfrak{H}\}$ an isonormal Gaussian process defined in a complete probability space $(\Omega,\mathcal{F},P),$ $\mathcal{F}$ is the $\sigma$-field generated by $W.$ Now Malliavin derivative operator is with respect to $W.$ Then, we have
\begin{equation}\label{yfh}
I(F\|N)\leq c\left(\frac{1}{\sigma^4}(E[F]-\mu)^2+A_F|{\rm Var}(F)-\sigma^2|^2+C_{F}\left(E\|D\Gamma\|_{\mathfrak{H}}^4\right)^{1/2}\right),
\end{equation}
where $\Gamma:=\left\langle DF,-DL^{-1}F \right\rangle_{\mathfrak{H}}$ and
$$A_F:=\frac{1}{\sigma^4}\left(E\|DF\|_{\mathfrak{H}}^8E|\Gamma|^{-8}\right)^{1/4},C_{F}:=A_F+\left(E\|DF\|_{\mathfrak{H}}^8E|\Gamma|^{-16}\right)^{1/4}.$$
The bound (\ref{yfh}) thus provides us a potential tool to study the Fisher information distance for stochastic differential equations driven by fractional Brownian motion, or stochastic partial differential equations.


\noindent(ii) Let $F=I_q(f)$ be a multiple Wiener-It\^o integral of order $q\geq 2.$ We have $-DL^{-1}F=\frac{1}{q}DF$ and hence, $\Gamma=\frac{1}{q}\|DF\|_{\mathfrak{H}}^2.$ We now use the moment estimates for $D^2F\otimes_1 DF$ provided in \cite{Nourdin1} and we obtain
$$\left(E\|D\Gamma\|_{\mathfrak{H}}^4\right)^{1/2}\leq c(E|F|^4-3),$$
where $c$ is a positive constant. So our bound  (\ref{yfh}) recovers the fourth moment bound established in \cite{Nourdin1} for the multiple Wiener-It\^o integrals.
\end{rem}
\section{Optimal Fisher information bounds for small noise SDEs}\label{sec2e}
In this Section, we apply Theorem \ref{lm2} to investigate the rate of Fisher information convergence for the solution to the equation (\ref{eq3}) and its additive functional (\ref{eq1}). Although  theory of stochastic differential equations with small noise is very rich, to the best of our knowledge, the results of this section are new. Given a function $h(t,x),$ we use the notations
$$h'(t,x)=\frac{\partial h(t,x)}{\partial{x}} \quad \text{and}\quad h''(t,x)=\frac{\partial ^2h(t,x)}{\partial x^2}.$$
We make the use of the following assumptions:
 \begin{itemize}
 	\item[$(A_{1})$] The coefficients $b, \sigma :[0,T]\times \mathbb{R}\to \mathbb{R}$ are measurable functions having linear growth, that is, there exists $L>0$ such that
 $$ |b(t,x)| +|\sigma(t,x)|\le L(1+|x|)\,\,\, \forall x\in \mathbb{R},t\in[0,T].$$
 	\item [$(A_2)$]$\sigma(t,x)$ and $b(t,x)$ are twice continuously differentiable in $x$ with the derivatives bounded by $L.$
 	\item [$(A_3)$] $f(t,x)$ is twice differentiable in $x,$ $f(t,x)$ together with its derivatives have polynomial growth and $\|f'\|_0:=\inf\limits_{(t,x)\in [0,T]\times \mathbb{R}}f'(t,x)>0.$
 \end{itemize}
The main results of this section are stated in the following theorems.
\begin{theorem}\label{klf9} Let $(X_{\varepsilon,t})_{t\in[0,T]}$ and $(x_t)_{t\in[0,T]}$ be the solutions to the equations (\ref{eq3}) and (\ref{eq4}), respectively. Define
$$\tilde{X}_{\varepsilon,t}:=\frac{X_{\varepsilon,t}-x_t}{\varepsilon},\,\,\,\beta_t^2:=\int_0^t \sigma^2(r,x_r)\exp \left(2\int_r^t b'(u,x_u)du  \right)dr,\,\,t\in[0,T].$$
Suppose the assumptions $(A_1)$-$(A_2)$ and that, for some $p_0>16,$
\begin{equation}\label{gy3}
E\left[\bigg(\int_0^t\sigma^2(r,X_{\varepsilon,r})dr\bigg)^{-p_0}\right]<\infty\,\,\,\forall\,\varepsilon\in (0,1),t\in(0,T].
\end{equation}
Then, for all $\varepsilon\in (0,1)$ and $t\in(0,T],$ we have
\begin{multline}
I(\tilde{X}_{\varepsilon,t}\|N_t)\leq C\bigg(\frac{t^4}{\beta_t^4}+\frac{t^4}{\beta_t^4}\bigg(E\bigg|\int_0^t\sigma^2(r,X_{\varepsilon,r})dr\bigg|^{-p_0}\bigg)^{\frac{2}{p_0}}\\+
t^4\bigg(E\bigg|\int_0^t\sigma^2(r,X_{\varepsilon,r})dr\bigg|^{-p_0}\bigg)^{\frac{4}{p_0}}\bigg)\varepsilon^2,\label{ukf}
\end{multline}
where $N_t$ denotes a normal random variable with mean zero and variance $\beta_t^2$ and $C$ is a positive constant not depending on $t$ and $\varepsilon.$
\end{theorem}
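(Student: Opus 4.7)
The plan is to apply Theorem \ref{lm2} with $F=\tilde{X}_{\varepsilon,t}$, target mean $\mu=0$ and target variance $\sigma^2=\beta_t^2$, and then show that each of the three summands on the right-hand side of (\ref{ufaq}) is $O(\varepsilon^2)$. The preparatory step is to differentiate (\ref{eq3}) in the Malliavin sense, which gives a linear SDE for $D_sX_{\varepsilon,t}$, $s\le t$, whose solution factorizes as
$$D_s X_{\varepsilon,t}=\varepsilon\,\sigma(s,X_{\varepsilon,s})\,\Phi_\varepsilon(s,t),\qquad D_s\tilde{X}_{\varepsilon,t}=\sigma(s,X_{\varepsilon,s})\,\Phi_\varepsilon(s,t),$$
where $\Phi_\varepsilon(s,\cdot)$ solves a linear SDE with drift $b'(r,X_{\varepsilon,r})$ and diffusion $\varepsilon\sigma'(r,X_{\varepsilon,r})$ starting from $1$ at time $s$. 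Under $(A_1)$-$(A_2)$, standard Gronwall-type estimates yield uniform-in-$\varepsilon$ $L^p$ bounds on $\Phi_\varepsilon$ and on its inverse, and show that $\Phi_\varepsilon(s,t)\to \exp(\int_s^t b'(u,x_u)du)$ in every $L^p$ as $\varepsilon\to 0$.

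With these derivatives in hand, the three terms in (\ref{ufaq}) are treated separately. For the mean term, taking expectations in (\ref{eq3}), Taylor-expanding $b(s,X_{\varepsilon,s})$ around $x_s$, and using that the first-order stochastic correction of $X_{\varepsilon,s}-x_s$ has mean zero, one obtains $|E[X_{\varepsilon,t}]-x_t|=O(\varepsilon^2)$, whence $(E[\tilde{X}_{\varepsilon,t}])^2=O(\varepsilon^2)$. For the variance term, the identity $E[\Theta]={\rm Var}(\tilde{X}_{\varepsilon,t})$ established in the proof of Theorem \ref{lm2} reduces matters to comparing $\Phi_\varepsilon$ and $X_{\varepsilon,\cdot}$ with their deterministic limits, giving $|{\rm Var}(\tilde{X}_{\varepsilon,t})-\beta_t^2|=O(\varepsilon)$. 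For the gradient term, one differentiates $\Theta=\int_0^t E[D_s\tilde{X}_{\varepsilon,t}|\mathcal{F}_s]\,D_s\tilde{X}_{\varepsilon,t}\,ds$ once more and observes that every resulting term involves the second Malliavin derivative $D^2X_{\varepsilon,t}$, which satisfies a linear equation whose forcing carries an extra factor of $\varepsilon$ inherited from the $\varepsilon\sigma$ in (\ref{eq3}); this yields $D^2X_{\varepsilon,t}=O(\varepsilon^2)$, hence $D\Theta=O(\varepsilon)$, and consequently $(E\|D\Theta\|_{L^2[0,T]}^4)^{1/2}=O(\varepsilon^2)$.

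It remains to control the negative moment factors $E|\Theta|^{-k}$ entering $A_F$ and $C_F$. The explicit form of $D\tilde{X}_{\varepsilon,t}$ yields a lower bound of the shape $\Theta\ge c_\varepsilon\int_0^t\sigma^2(r,X_{\varepsilon,r})dr$, where $c_\varepsilon$ is a deterministic positive lower bound on $\Phi_\varepsilon^2$ obtained via Gronwall; assumption (\ref{gy3}) then provides the required bounds on $(E|\Theta|^{-p})^{1/p}$ for $p\le 16$. Substituting everything back into (\ref{ufaq}) and keeping track of the explicit dependence on $t$ and $\beta_t$ leads to (\ref{ukf}). The main technical obstacle is the fourth moment bound on $D\Theta$, which demands careful bookkeeping of all $\varepsilon$-factors produced by differentiating the representation of $D\tilde{X}_{\varepsilon,t}$ and by the linear equation solved by $D^2 X_{\varepsilon,t}$. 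A secondary subtlety is that a naive Gronwall estimate for $E[X_{\varepsilon,t}-x_t]$ only gives $O(\varepsilon)$, so a genuine second-order expansion exploiting the centering of the leading stochastic term is needed to reach the sharper rate $O(\varepsilon^2)$.
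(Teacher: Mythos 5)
Your overall strategy coincides with the paper's: apply Theorem \ref{lm2} to $F=\tilde X_{\varepsilon,t}$ with $\mu=0$, $\sigma^2=\beta_t^2$; use the explicit solution $D_rX_{\varepsilon,t}=\varepsilon\,\sigma(r,X_{\varepsilon,r})\Phi_\varepsilon(r,t)$ of the linearized equation; obtain $|E[\tilde X_{\varepsilon,t}]|=O(\varepsilon)$ by noting that the martingale term vanishes in expectation and solving the resulting linear ODE (this is exactly how Proposition \ref{pro1} reaches the sharper rate, and your remark that naive Gronwall on $E|X_{\varepsilon,t}-x_t|$ only gives $O(\varepsilon)$ is the right diagnosis); get $|{\rm Var}(\tilde X_{\varepsilon,t})-\beta_t^2|=O(\varepsilon)$ by comparison with the deterministic limit; and deduce $D\Theta=O(\varepsilon)$ from $D^2X_{\varepsilon,t}=O(\varepsilon^2)$ as in Proposition \ref{pr3.7}. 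All of that is sound and matches the paper.

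The gap is in your treatment of the negative moments of $\Theta$. You assert a lower bound $\Theta\ge c_\varepsilon\int_0^t\sigma^2(r,X_{\varepsilon,r})\,dr$ with $c_\varepsilon$ a \emph{deterministic} positive lower bound on $\Phi_\varepsilon^2$ ``obtained via Gronwall''. No such deterministic bound exists: one has $\Phi_\varepsilon(r,t)=\exp\bigl(\int_r^t(b'-\tfrac{\varepsilon^2}{2}\sigma'^2)\,du+\varepsilon\int_r^t\sigma'(u,X_{\varepsilon,u})\,dB_u\bigr)$, and the stochastic integral is unbounded below on a set of positive probability, so the essential infimum of $\Phi_\varepsilon$ is $0$; Gronwall's lemma is an upper-bound device and cannot produce a positive pathwise lower bound for a stochastic exponential. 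The paper's Proposition \ref{momenam1} works instead with the \emph{random} lower bound $\Theta\ge e^{-5LT/2}e^{-2\max_{s\le T}M_s}\int_0^t\sigma^2(r,X_{\varepsilon,r})\,dr$ with $M_s=\varepsilon\int_0^s\sigma'(u,X_{\varepsilon,u})\,dB_u$, controls the exponential prefactor via Dubins--Schwarz (bounded quadratic variation) and Fernique's theorem, and separates it from the $\int_0^t\sigma^2\,dr$ factor by H\"older's inequality. That H\"older step is precisely why the hypothesis requires $p_0>16$ strictly and why the conclusion carries $(E|\int_0^t\sigma^2\,dr|^{-p_0})^{p/p_0}$ with $p<p_0$ rather than $p=p_0$; your argument as written would make the condition $p_0>16$ superfluous, which signals that the step rests on a false bound rather than being merely under-explained. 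The remainder of your proof survives once this step is replaced by the random-prefactor/H\"older argument.
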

We can write down $C$ in an explicit form ($\log C$ is a polynomial in $T$ and $L$), but it is not our goal here. The non-degeneracy condition (\ref{gy3}) make Theorem \ref{klf9} not easy to use in practical applications. Hence, it is necessary to obtain the sufficient conditions which are easy to check. We have the following.
\begin{cor}\label{9lg}Suppose the assumptions $(A_1)$-$(A_2).$ We assume, in addition, that $\sigma(0,X_0)\neq 0$ and $|\sigma(t,x)-\sigma(s,x)|\leq L|t-s|^{\delta_1}$ for all $x\in \mathbb{R}$ and $s,t\in[0,T],$ where $L,\delta_1$ are positive real numbers. Then, we have
\begin{equation}\label{wu5f}
I(\tilde{X}_{\varepsilon,t}\|N_t)\leq C\left(\frac{t^4}{\beta_t^4}+\frac{t^2}{\beta_t^4}+1\right)\varepsilon^2\,\,\,\forall\,\varepsilon\in (0,1),t\in(0,T],
\end{equation}
where $C$ is a positive constant not depending on $t$ and $\varepsilon.$
\end{cor}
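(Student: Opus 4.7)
The plan is to apply Theorem \ref{klf9} directly and reduce the problem to estimating the negative moments of the Volterra functional
$$V_{\varepsilon,t}:=\int_0^t\sigma^2(r,X_{\varepsilon,r})\,dr$$
for some exponent $p_0>16$. If we can establish a bound of the form
$$\Big(E\big|V_{\varepsilon,t}\big|^{-p_0}\Big)^{1/p_0}\leq C\,t^{-1}\qquad \forall\,\varepsilon\in(0,1),\ t\in(0,T],$$
then condition (\ref{gy3}) holds and the three terms on the right-hand side of (\ref{ukf}) collapse to $\frac{t^4}{\beta_t^4}$, $\frac{t^4}{\beta_t^4}\cdot t^{-2}=\frac{t^2}{\beta_t^4}$, and $t^4\cdot t^{-4}=1$, respectively, which is exactly the claimed estimate (\ref{wu5f}).

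To establish the negative moment estimate I would invoke the general bound for the negative moment of Volterra functionals announced in the Appendix, and verify its hypotheses in the present setting. The key ingredients are: (i) the nondegeneracy $\sigma(0,X_0)\neq 0$, which gives a strictly positive value $|\sigma(0,X_0)|^2>0$ for the integrand at $r=0$; (ii) the H\"older continuity $|\sigma(t,x)-\sigma(s,x)|\leq L|t-s|^{\delta_1}$ in time combined with the Lipschitz continuity in $x$ from assumption $(A_2)$; and (iii) the $L^p$-modulus of continuity of the SDE solution $r\mapsto X_{\varepsilon,r}$, which by $(A_1)$--$(A_2)$ together with the Burkholder--Davis--Gundy inequality is of order $r^{1/2}$ uniformly in $\varepsilon\in(0,1)$. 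These three facts together force $\sigma^2(r,X_{\varepsilon,r})$ to stay close to $\sigma^2(0,X_0)$ on a short initial interval with overwhelming probability, and the Volterra framework converts this into the required negative-moment bound with the sharp $t^{-1}$ scaling.

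Once the Volterra estimate is in hand, the proof is finished by plain substitution into (\ref{ukf}): the three exponents $\tfrac{2}{p_0}$ and $\tfrac{4}{p_0}$ appearing in that inequality are precisely calibrated so that the extra factors of $t^{-2}$ and $t^{-4}$ cancel against the $t^4$ prefactors, leaving the coefficient $\frac{t^4}{\beta_t^4}+\frac{t^2}{\beta_t^4}+1$.

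The main obstacle I anticipate is obtaining the correct power of $t$ in the negative moment bound rather than merely a finite constant: a loose bound $E|V_{\varepsilon,t}|^{-p_0}\leq C$ would leave unacceptable $t^4$ terms in the final inequality. Extracting the sharp $t^{-p_0}$ rate requires exploiting both the H\"older continuity in $t$ (with exponent $\delta_1$) and the $1/2$-H\"older continuity of $X_{\varepsilon,r}$ in $L^p$ to prove that $V_{\varepsilon,t}\geq \tfrac{1}{2}|\sigma(0,X_0)|^2\,t$ on an event of sufficiently high probability, and then handling the complementary event by a moment bound on $V_{\varepsilon,t}^{-1}$ coming from the Appendix. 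Everything else reduces to routine uniform-in-$\varepsilon$ moment estimates for SDEs.
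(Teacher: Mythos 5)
Your proposal is correct and follows essentially the same route as the paper: verify the hypotheses of Lemma A in the Appendix (using $E|X_{\varepsilon,t}-X_0|^p\leq Ct^{p/2}$, the H\"older continuity of $\sigma$ in $t$, and the Lipschitz continuity in $x$) with $Y_t=X_{\varepsilon,t}$, $h=\sigma$, $k\equiv 1$ to get $E\big[(\int_0^t\sigma^2(r,X_{\varepsilon,r})dr)^{-p_0}\big]\leq Ct^{-p_0}$, and then substitute into (\ref{ukf}). Your arithmetic for how the powers of $t$ cancel matches the paper exactly.
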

\begin{proof}It is easy to see that $E|X_{\varepsilon,t}-X_0|^p\leq Ct^{\frac{p}{2}}$ for all $\varepsilon\in (0,1),t\in[0,T],$ where $C$ is a positive constant not depending on $t$ and $\varepsilon.$ On the other hand, the function $\sigma$ satisfies
$$|\sigma(t,x)-\sigma(s,y)|\leq L(|t-s|^{\delta_1}+|x-y|)\,\,\,\forall x,y\in \mathbb{R},s,t\in[0,T].$$
Hence, we can apply Lemma A (given in Appendix) to $Y_t=X_{\varepsilon,t},$ $h(t,x)=\sigma(t,x)$ and $k(t,s)=1$ and we obtain
$$
E\left[\bigg(\int_0^t\sigma^2(r,X_{\varepsilon,r})dr\bigg)^{-p_0}\right]\leq Ct^{-p_0},\,\,t\in(0,T]
$$
for all $p_0>0.$ So the bound (\ref{wu5f}) follows directly from (\ref{ukf}).
\end{proof}
\begin{rem}Generally, the Berry-Esseen bounds for the rate of convergence are more informative in practice. As an application of Corollary \ref{9lg}, we obtain the following
$$\sup\limits_{x\in \mathbb{R}}\left|P(\tilde{X}_{\varepsilon,t}\leq x)-P(N_t\leq x)\right|\leq \sqrt{I(\tilde{X}_{\varepsilon,t}\|N_t)}\leq C\left(\frac{t^2}{\beta_t^2}+\frac{t}{\beta_t^2}+1\right)\varepsilon\,\,\,\forall\,\varepsilon\in (0,1),t\in(0,T].$$
\end{rem}
The bound (\ref{gy3}) provides us the convergence rate of order $O(\varepsilon^2)$ as $\varepsilon\to 0.$ One may wonder whether this rate is optimal. Interestingly, the answer is affirmative as in the next theorem.
\begin{theorem}\label{k7zlf}Suppose the assumptions $(A_1)$-$(A_2).$ Then, for each $t\in(0,T],$ we have
\begin{equation}\label{mm1}
\lim\limits_{\varepsilon\to 0}\frac{1}{\varepsilon^2}I(\tilde{X}_{\varepsilon,t}\|N_t)\geq \frac{1}{4\beta_t^4} \left(E\left|E\left[\delta\left(V_tDU_t\right)|U_t\right]\right|\right)^2,
\end{equation}
where $\beta_t^2$ is as in Theorem \ref{klf9}, $(U_t)_{t\in[0,T]}$ and $(V_t)_{t\in[0,T]}$ are stochastic processes defined by
\begin{equation}\label{ydlq1}
U_t=\int_0^tb'(s,x_s)U_sds+\int_0^t\sigma(s,x_s)dB_s,\,\,0 \leq t\leq T,
\end{equation}
\begin{equation}\label{ydlq2}
V_t=\int_0^t\left(\frac{1}{2}b''(s,x_s)U_s^2+b'(s,x_s)V_s\right)ds+\int_0^t\sigma'(s,x_s)U_sdB_s,\,\,0 \leq t\leq T.
\end{equation}
\end{theorem}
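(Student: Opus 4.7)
The plan is to lower-bound $I(\tilde X_{\varepsilon,t}\|N_t)$ via a Cauchy--Schwarz inequality applied to the score function, and then to extract the leading $\varepsilon^2$ coefficient using a Taylor expansion based on Lemma \ref{uhuy}. Writing $F_\varepsilon := \tilde X_{\varepsilon,t}$ and $S(F_\varepsilon) := \rho_{F_\varepsilon}(F_\varepsilon) + F_\varepsilon/\beta_t^2$, one has for any $\eta\in L^2(\Omega)$,
\[
I(F_\varepsilon\|N_t)\cdot E[\eta^2] \ge (E[\eta\cdot S(F_\varepsilon)])^2.
\]
A judicious choice of $\eta$ of the form $\mathrm{sign}(W)$, where $W := E[\delta(V_tDU_t)\,|\,U_t]$ (a $\sigma(U_t)$-measurable random variable with $E[\eta^2]\le 1$), is engineered so that $E|W|$ appears in the limit.

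To expand $S(F_\varepsilon)$ in powers of $\varepsilon$ I would apply Lemma \ref{uhuy} with the \emph{deterministic} kernel $u_s := D_sU_t$, giving
\[
S(F_\varepsilon) = E\!\left[\frac{F_\varepsilon}{\beta_t^2} - \delta\!\left(\frac{DU_t}{\Theta_\varepsilon}\right)\,\bigg|\,F_\varepsilon\right],\qquad \Theta_\varepsilon := \langle DF_\varepsilon, DU_t\rangle_{L^2[0,T]},
\]
which is convenient because $\Theta_0 = \beta_t^2$ is a deterministic constant. The Freidlin--Wentzell expansion $\tilde X_{\varepsilon,t} = U_t + \varepsilon V_t + \varepsilon^2 R_\varepsilon$, derivable in every $L^p$ from $(A_1)$--$(A_2)$ via Gronwall applied to the linear SDEs (\ref{ydlq1})--(\ref{ydlq2}) and to the remainder equation, together with its Malliavin-differentiated counterpart, gives $\Theta_\varepsilon = \beta_t^2 + \varepsilon\langle DV_t,DU_t\rangle + O_{L^p}(\varepsilon^2)$. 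Taylor-expanding $\delta(DU_t/\Theta_\varepsilon)$ and applying the divergence product rule $\delta(Gv) = G\delta(v) - \langle DG, v\rangle$ twice---once to expand, once to reassemble using the defining identity $\delta(V_tDU_t) = V_tU_t - \langle DV_t, DU_t\rangle$---I would identify the first-order-in-$\varepsilon$ term of $S(F_\varepsilon)$ as an explicit expression built from $\delta(V_tDU_t)$.

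Passing to the limit $\varepsilon\to 0$ (justified by $L^2$-continuity of the conditional expectation $E[\cdot\,|\,F_\varepsilon] \to E[\cdot\,|\,U_t]$) and using a Gaussian integration by parts on $U_t$ to simplify, the inner product $E[\eta\cdot S(F_\varepsilon)]$ becomes $\varepsilon$ times an explicit quantity which---after a further use of Cauchy--Schwarz and the defining properties of $W$---reduces to $\tfrac{1}{2\beta_t^2}E|W|$. Squaring yields the announced $\frac{1}{4\beta_t^4}(E|W|)^2$. The chief technical obstacle is the uniform-in-$\varepsilon$ control needed to justify the limiting procedure: bounds on $R_\varepsilon$, on the Malliavin derivatives $DF_\varepsilon$ and $D^2F_\varepsilon$, and on the negative moments $E|\Theta_\varepsilon|^{-p}$ (necessary so that $\delta(DU_t/\Theta_\varepsilon)$ is well defined for all small $\varepsilon$) must be established simultaneously. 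These follow from differentiating the SDE (\ref{eq3}) twice in $x$ under $(A_1)$--$(A_2)$, combined with the Volterra-type negative-moment estimate of the Appendix.
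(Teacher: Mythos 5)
Your proposal breaks down at its central step: the identification of the first-order term of $S(F_\varepsilon)$. Carrying out the expansion you describe, with $\Theta_\varepsilon=\langle DF_\varepsilon,DU_t\rangle=\beta_t^2+\varepsilon\Psi+O(\varepsilon^2)$ and $\Psi:=\langle DV_t,DU_t\rangle$, one finds
\begin{equation*}
\delta\Bigl(\frac{DU_t}{\Theta_\varepsilon}\Bigr)=\frac{U_t}{\beta_t^2}-\frac{\varepsilon}{\beta_t^4}\,\delta\bigl(\Psi\,DU_t\bigr)+O(\varepsilon^2),
\qquad
\delta\bigl(\Psi\,DU_t\bigr)=\Psi U_t-\langle D\Psi,DU_t\rangle,
\end{equation*}
so that $S(F_\varepsilon)=\varepsilon\,E[\Xi\mid F_\varepsilon]+O(\varepsilon^2)$ with $\Xi:=\beta_t^{-2}\bigl(V_t+\beta_t^{-2}(\Psi U_t-\langle D\Psi,DU_t\rangle)\bigr)$. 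The divergence that appears carries the multiplier $\Psi$, \emph{not} $V_t$: no use of the product rule turns $\delta(\Psi\,DU_t)$ into $\delta(V_t\,DU_t)=V_tU_t-\Psi$, so the claimed ``reassembly'' is false. This is fatal, not cosmetic. Take $b\equiv0$ and autonomous $\sigma$, so $U_t=\sigma(X_0)B_t$ and $V_t=\tfrac12\sigma'(X_0)\sigma(X_0)(B_t^2-t)$. Then $E[\Xi\mid U_t]=\tfrac{3\sigma'}{2\sigma t}(B_t^2-t)$ lies in the second Wiener chaos (even in $B_t$), while $W=E[\delta(V_tDU_t)\mid U_t]=\tfrac12\sigma^2\sigma'(B_t^3-3tB_t)$ lies in the third (odd in $B_t$). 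Hence, granting all of your limiting machinery, your pairing converges to $E\bigl[\mathrm{sign}(W)\,E[\Xi\mid U_t]\bigr]=0$ by symmetry, and Cauchy--Schwarz yields only $\liminf_{\varepsilon\to0}\varepsilon^{-2}I(\tilde X_{\varepsilon,t}\|N_t)\ge0$, whereas the right-hand side of (\ref{mm1}) is strictly positive whenever $\sigma'(X_0)\neq0$. (If instead you take $\eta=\phi(F_\varepsilon)$, integration by parts does give a rigorous bound, namely $\liminf\varepsilon^{-1}\sqrt{I}\ge E\bigl|E[\Xi\mid U_t]\bigr|$; but this is a genuinely different bound, incomparable with (\ref{mm1}) --- in the same example it is of order $1$ while the square root of the right-hand side of (\ref{mm1}) is of order $\sqrt{t}$ --- so it cannot be used to prove the stated theorem.)

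Two further gaps compound this. First, since your $\eta=\mathrm{sign}(W)$ is $\sigma(U_t)$-measurable rather than $\sigma(F_\varepsilon)$-measurable, $E[\eta\,\rho_{F_\varepsilon}(F_\varepsilon)]$ cannot be evaluated by integration by parts against $p_{F_\varepsilon}$; your substitute --- ``$L^2$-continuity of $E[\cdot\mid F_\varepsilon]\to E[\cdot\mid U_t]$'' --- is not a theorem (conditional expectations are not continuous in the conditioning variable in general), and nothing in the sketch supplies the joint-density regularity that would be needed. Second, your route requires $\Theta_\varepsilon\neq0$ a.s.\ together with negative moments $E|\Theta_\varepsilon|^{-p}$, i.e.\ a non-degeneracy hypothesis such as $\sigma(0,X_0)\neq0$ (Lemma A of the Appendix needs $h(0,X_0)\neq0$), which Theorem \ref{k7zlf} does not assume. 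The paper's proof avoids every one of these issues by never touching the score of $\tilde X_{\varepsilon,t}$: it uses the Pinsker--Stam inequality $\sqrt{I(\tilde X_{\varepsilon,t}\|N_t)}\ge d_{TV}(\tilde X_{\varepsilon,t},N_t)$, computes the \emph{law-level} limit $\lim_{\varepsilon\to0}\varepsilon^{-1}\bigl(E[g(\tilde X_{\varepsilon,t})]-E[g(U_t)]\bigr)=\beta_t^{-2}E[g(U_t)\delta(V_tDU_t)]$ for bounded continuous $g$ (via the expansions (\ref{ccv}), (\ref{cxcv}), (\ref{avn03})), and only then takes $g=\mathrm{sign}\bigl(E[\delta(V_tDU_t)\mid U_t=\cdot]\bigr)$. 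It is the pairing of test functions against the laws, not the scores, that produces $\delta(V_tDU_t)$, and the factor $\tfrac12$ in (\ref{mm1}) is precisely the $\tfrac12$ in the definition of $d_{TV}$.
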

\begin{rem}The stochastic differential equations (\ref{ydlq1}) and (\ref{ydlq2})  can be solved explicitly. We have
$$U_t=\int_0^t\sigma(s,x_s)e^{\int_s^tb'(u,x_u)du}dB_s,\,\,0 \leq t\leq T$$
and
$$V_t=\int_0^t\frac{1}{2}b''(s,x_s)U_s^2e^{\int_s^tb'(u,x_u)du}ds+\int_0^t\sigma'(s,x_s)U_se^{\int_s^tb'(u,x_u)du}dB_s,\,\,0 \leq t\leq T.$$
Furthermore, the random variables $U_t$ and $V_t$ are Malliavin differentiable and their derivatives are given by
$$D_rU_t=\sigma(r,x_r)e^{\int_r^tb'(u,x_u)du},\,\,0 \leq r\leq t\leq T,$$
\begin{multline*}
D_rV_t=\sigma'(r,x_r)U_re^{\int_r^tb'(u,x_u)du}\\
+\int_r^tb''(s,x_s)U_sD_rU_se^{\int_s^tb'(u,x_u)du}ds+
\int_r^t\sigma'(s,x_s)D_rU_se^{\int_s^tb'(u,x_u)du}dB_s,\,\,0 \leq r\leq t\leq T.
\end{multline*}
It is also easy to see that $U_t,V_t\in \mathbb{D}^{k,p}$ for all $k\geq 1,p\geq 2.$
\end{rem}
In the next theorem, for the additive functional of solutions, we also obtain the convergence rate of optimal order $O(\varepsilon^2).$
\begin{theorem}\label{theorem2}Consider the stochastic process $(Y_{\varepsilon,t})_{t\in[0,T]}$  defined by (\ref{eq1}). Define $y_t:=\int_0^{t}f(s,x_s)ds$ and
	$$ \tilde{Y}_{\varepsilon,t}:=\frac{Y_{\varepsilon,t}-y_t}{\varepsilon},\mbox{  }\gamma_t^{2}:=\int_0^t \left(\int_r^tf'(s,x_s)\sigma (r,x_r)e ^{\int_r^s b'(u,x_u)du}ds\right)^2dr,\,\,0 \leq t\leq T.$$
Suppose the assumptions $(A_1)$-$(A_3)$  and that, for some $p_0>16,$
\begin{equation}\label{g4y3}
E\left[\bigg(\int_0^t(t-r)^2\sigma^2(r,X_{\varepsilon,r})dr\bigg)^{-p_0}\right]<\infty\,\,\,\forall\,\varepsilon\in (0,1),t\in(0,T].
\end{equation}
Then, for all $\varepsilon\in (0,1)$ and $t\in(0,T],$ we have
\begin{multline}
I(\tilde{Y}_{\varepsilon,t}\|Z_t)
\leq C\bigg(\frac{t^4}{\gamma_t^4}+\frac{t^{10}}{\gamma_t^4}\bigg(E\bigg|\int_0^t(t-r)^2\sigma^2(r,X_{\varepsilon,r})dr\bigg|^{-p_0}\bigg)^{\frac{2}{p_0}}\\+
t^{10}\bigg(E\bigg|\int_0^t(t-r)^2\sigma^2(r,X_{\varepsilon,r})dr\bigg|^{-p_0}\bigg)^{\frac{4}{p_0}}\bigg)\varepsilon^2,\label{jjdz}
\end{multline}
where $Z_t$ denotes a normal random variable with mean zero and variance $\gamma_t^2$ and $C$ is a positive constant not depending on $t$ and $\varepsilon.$
\end{theorem}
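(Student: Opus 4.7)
The plan is to apply Theorem \ref{lm2} to $F=\tilde Y_{\varepsilon,t}$ with $\mu=0$ and $\sigma^{2}=\gamma_t^{2}$, and to verify that each of the three summands on the right-hand side of (\ref{ufaq}) contributes an $O(\varepsilon^{2})$ term, while the prefactors $A_F,C_F$ can be absorbed into the $t$-dependent factors of (\ref{jjdz}) with the help of the non-degeneracy hypothesis (\ref{g4y3}).

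First I would record explicit formulas for the Malliavin derivatives. Letting $J_{\varepsilon,r,s}$ denote the (a.s.\ strictly positive) first-variation process associated with (\ref{eq3}), so that $D_rX_{\varepsilon,s}=\varepsilon\,\sigma(r,X_{\varepsilon,r})J_{\varepsilon,r,s}$ for $r\leq s\leq t$, the chain rule gives
\[
D_r\tilde Y_{\varepsilon,t}=\sigma(r,X_{\varepsilon,r})\int_r^{t}f'(s,X_{\varepsilon,s})J_{\varepsilon,r,s}\,ds,
\]
and a second differentiation in a new Brownian time $q$ produces a closed formula for $D^{2}_{q,r}\tilde Y_{\varepsilon,t}$ in which at least one extra factor of $\varepsilon$ appears (from differentiating $\sigma(r,X_{\varepsilon,r})$ or $J_{\varepsilon,r,s}$). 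As $\varepsilon\to 0$, $D_r\tilde Y_{\varepsilon,t}$ converges to $\sigma(r,x_r)\int_r^{t}f'(s,x_s)e^{\int_r^{s}b'(u,x_u)du}\,ds$, whose squared $L^{2}[0,T]$-norm is precisely $\gamma_t^{2}$.

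Next I would estimate the three summands. For the mean, a second-order Taylor expansion of $f$ combined with the bounds $|E[X_{\varepsilon,s}-x_s]|=O(\varepsilon^{2})$ (Gronwall applied to the linearized drift difference) and $E|X_{\varepsilon,s}-x_s|^{p}=O(\varepsilon^{p})$ yields $E[\tilde Y_{\varepsilon,t}]=O(\varepsilon)$. For the variance, the Clark-Ocone formula gives $\mathrm{Var}(\tilde Y_{\varepsilon,t})=E[\Theta]$; substituting the explicit form of $\Theta$ and replacing $X_{\varepsilon,\cdot}$ and $J_{\varepsilon,\cdot,\cdot}$ by their $\varepsilon=0$ limits leads to $|E[\Theta]-\gamma_t^{2}|=O(\varepsilon)$. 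For $E\|D\Theta\|^{4}_{L^{2}[0,T]}$, differentiation under the integral yields
\[
D_q\Theta=\int_0^{t}D^{2}_{q,r}F\cdot u_r\,dr+\int_0^{t}D_rF\cdot D_qu_r\,dr,
\]
and since both $D^{2}F$ and $Du_r$ carry an additional factor of $\varepsilon$, H\"older's inequality together with uniform-in-$\varepsilon$ $L^{p}$ bounds on $J_{\varepsilon,r,s}$ and its Malliavin derivatives produces $E\|D\Theta\|^{4}_{L^{2}[0,T]}=O(\varepsilon^{4})$.

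Finally, the coefficients $A_F,C_F$ require a bound on $E|\Theta|^{-p}$ for $p\in\{8,16\}$, and this is where the assumption $\|f'\|_0>0$ is essential. A Jensen-type argument applied to $\phi_r(s):=E[J_{\varepsilon,r,s}|\mathcal F_r]$ (which satisfies $\phi_r(r)=1$ and $|\phi_r'|\leq L\phi_r$) gives $|E[D_rF|\mathcal F_r]|\geq\|f'\|_0\,e^{-LT}|\sigma(r,X_{\varepsilon,r})|(t-r)$, whence
\[
\|u\|^{2}_{L^{2}[0,T]}\geq\|f'\|_0^{2}e^{-2LT}\int_0^{t}(t-r)^{2}\sigma^{2}(r,X_{\varepsilon,r})\,dr,
\]
which combined with the decomposition $\Theta=\|u\|^{2}+R$ (where $E[R]=0$ and $R=O(\varepsilon)$ in $L^{p}$) and hypothesis (\ref{g4y3}) delivers the desired moment bounds on $\Theta^{-p}$. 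The positive moments $E\|u\|^{8}$ and $E\|DF\|^{8}$ are controlled by standard SDE $L^{p}$-estimates, and substituting all this back into (\ref{ufaq}) produces (\ref{jjdz}). I expect the main obstacle to be the $E\|D\Theta\|^{4}_{L^{2}[0,T]}=O(\varepsilon^{4})$ estimate: it demands careful bookkeeping to ensure that each additional Malliavin differentiation really contributes a factor of $\varepsilon$ while keeping the $t$-dependence polynomial (yielding the $t^{10}$ appearing in (\ref{jjdz})), together with uniform $L^{p}$ control of the iterated derivatives of the flow $J_{\varepsilon,\cdot,\cdot}$.
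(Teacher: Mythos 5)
Your overall strategy is the paper's: apply Theorem \ref{lm2} to $F=\tilde Y_{\varepsilon,t}$ with $N=Z_t$, show the mean and the variance discrepancy are $O(\varepsilon)$, show $E\|D\Theta\|_{L^2[0,T]}^4=O(\varepsilon^4)$ via the extra factor of $\varepsilon$ carried by each second Malliavin derivative, and control $A_F,C_F$ through negative moments of $\Theta$ using $\|f'\|_0>0$ and hypothesis (\ref{g4y3}). The derivative formulas, the $O(\varepsilon)$ estimates, and the $O(\varepsilon^4)$ estimate are all in line with Propositions \ref{pr7}--\ref{rsprx}, and contrary to your closing remark, the $t$-power bookkeeping there is routine.

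The genuine gap is in your treatment of $E|\Theta|^{-p}$. You establish the pointwise lower bound $\|u\|^2_{L^2[0,T]}\geq\|f'\|_0^2e^{-2LT}\int_0^t(t-r)^2\sigma^2(r,X_{\varepsilon,r})\,dr$ (which is correct, since $E\left[e^{\int_r^s b'(u,X_{\varepsilon,u})du}Z_{r,s}\,\big|\,\mathcal F_r\right]\geq e^{-LT}$), but you then pass to $\Theta$ via the decomposition $\Theta=\|u\|^2+R$ with $E[R]=0$ and $R=O(\varepsilon)$ in $L^p$. Smallness of $R$ in every $L^p$ cannot yield finiteness of $E|\Theta|^{-p}$: negative moments are governed by the tail $P(|\Theta|\leq 1/y)$ as $y\to\infty$, and the Markov bound $P(|R|\geq 1/y)\leq y^qE|R|^q$ is increasing in $y$, so $\int_1^\infty y^{p-1}P(|R|\geq 1/y)\,dy$ diverges no matter how small $\|R\|_{L^q}$ is. (A toy example: $A\equiv 1$ and $R$ equal to $-1$ on an event of probability $\varepsilon^q$ gives $\|R\|_{L^q}\lesssim\varepsilon$ yet $E|A+R|^{-p}=\infty$.) The fix, which is what the paper does in Proposition \ref{rspr8}, is to bound the \emph{unconditioned} factor $D_r\tilde Y_{\varepsilon,t}$ from below almost surely as well: since $f'\geq\|f'\|_0>0$ and $e^{\int_r^s b'(u,X_{\varepsilon,u})du}Z_{r,s}\geq e^{-3LT/2}e^{M_s-M_r}\geq e^{-3LT/2}e^{-2\max_{t}|M_t|}$ with $M_t=\varepsilon\int_0^t\sigma'(u,X_{\varepsilon,u})dB_u$, one gets the a.s.\ inequality $\Theta\geq\|f'\|_0^2e^{-5LT/2}e^{-2\max_t M_t}\int_0^t(t-r)^2\sigma^2(r,X_{\varepsilon,r})\,dr$, and then Dubins--Schwarz plus Fernique give all exponential moments of $e^{2p\max_t M_t}$, so H\"older's inequality together with (\ref{g4y3}) yields $E|\Theta|^{-p}\leq C\left(E\left|\int_0^t(t-r)^2\sigma^2(r,X_{\varepsilon,r})\,dr\right|^{-p_0}\right)^{p/p_0}$ for $p<p_0$. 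Without this pointwise multiplicative (rather than additive) control of the fluctuation, the coefficients $A_F,C_F$ in (\ref{ufaq}) are not shown to be finite and the proof does not close.
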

\begin{cor}Suppose the assumptions $(A_1)$-$(A_3).$ We assume, in addition, that $\sigma(0,X_0)\neq 0$ and $|\sigma(t,x)-\sigma(s,x)|\leq L|t-s|^{\delta_1}$ for all $x\in \mathbb{R}$ and $s,t\in[0,T],$ where $L,\delta_1$ are positive real numbers. Then, we have
\begin{equation}\label{r3f}
I(\tilde{Y}_{\varepsilon,t}\|Z_t)\leq C\left(\frac{t^4}{\gamma_t^4}+\frac{1}{t^2}\right)\varepsilon^2\,\,\,\forall\,\varepsilon\in (0,1),t\in(0,T],
\end{equation}
where $C$ is a positive constant not depending on $t$ and $\varepsilon.$
\end{cor}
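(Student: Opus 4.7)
The plan is to derive this corollary from Theorem \ref{theorem2} in exactly the same way that Corollary \ref{9lg} is derived from Theorem \ref{klf9}. The only content is to verify the negative-moment assumption (\ref{g4y3}) with a polynomial rate in $t$ that is uniform in $\varepsilon\in(0,1)$, and then to combine the resulting bound with (\ref{jjdz}) so that the last two terms collapse to the single summand $1/t^2$.

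First, under $(A_1)$ a standard Gronwall/BDG argument gives the $L^p$-estimate
$$E|X_{\varepsilon,t}-X_0|^p \le C\,t^{p/2}, \qquad \varepsilon\in(0,1),\ t\in[0,T],$$
with $C$ independent of $\varepsilon$. Combining the assumed time-H\"older condition on $\sigma$ with the Lipschitz-in-$x$ bound that comes from $(A_2)$ yields the joint regularity
$$|\sigma(t,x)-\sigma(s,y)| \le L(|t-s|^{\delta_1}+|x-y|),$$
which places $\sigma$ in the scope of Lemma A in the Appendix. Applying Lemma A to $Y_t=X_{\varepsilon,t}$, $h(t,x)=\sigma(t,x)$ and now to the Volterra kernel $k(t,s)=t-s$ (rather than the constant kernel used in Corollary \ref{9lg}), and using $\sigma(0,X_0)\neq 0$, one obtains for every $p_0>0$
$$E\!\left[\left(\int_0^t (t-r)^2\sigma^2(r,X_{\varepsilon,r})\,dr\right)^{-p_0}\right] \le C\, t^{-3p_0}, \qquad t\in(0,T].$$
The exponent $-3p_0$ reflects the fact that, by continuity and nondegeneracy at the origin, the integral behaves like $\sigma^2(0,X_0)\,t^3/3$ for small $t$.

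Substituting this estimate into (\ref{jjdz}) produces the three terms
$$\frac{t^4}{\gamma_t^4}, \qquad \frac{t^{10}}{\gamma_t^4}\,\bigl(t^{-3p_0}\bigr)^{2/p_0}=\frac{t^{4}}{\gamma_t^4}, \qquad t^{10}\,\bigl(t^{-3p_0}\bigr)^{4/p_0}=t^{-2},$$
whose sum is precisely the right-hand side of (\ref{r3f}). The main delicate point is therefore entirely concentrated in Lemma A: the nondegeneracy $\sigma(0,X_0)\neq 0$, combined with the H\"older regularity of $\sigma$ and the $L^p$-bound for $X_{\varepsilon,t}-X_0$, must translate into the sharp decay $t^{-3p_0}$ with a constant uniform in $\varepsilon\in(0,1)$. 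Once that appendix estimate is in hand, everything else reduces to elementary algebra with the exponents of $t$ and $p_0$ in (\ref{jjdz}).
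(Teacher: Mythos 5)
Your overall plan is the paper's: verify the negative-moment condition \eqref{g4y3} with rate $t^{-3p_0}$, substitute into \eqref{jjdz}, and check that the exponents collapse to $t^4/\gamma_t^4+t^{-2}$ (your final algebra is correct). But the key step is not justified as you present it. Applying Lemma A \emph{directly} on $[0,t]$ with $h(t,x)=\sigma(t,x)$ and the kernel $k(t,s)=(t-s)^2$ (note: it must be $(t-s)^2$, not $t-s$ as you wrote, since Lemma A bounds $\int_0^t k(t,r)h^2(r,Y_r)\,dr$) gives, by the lemma's own conclusion,
\begin{equation*}
E\!\left[\Bigl(\int_0^t (t-r)^2\sigma^2(r,X_{\varepsilon,r})\,dr\Bigr)^{-p_0}\right]
\le C\Bigl(\frac{1}{t\,k(t,0)}\Bigr)^{p_0}\Bigl(1+\Bigl(\frac{t^{\delta_0}}{k(t,0)}\Bigr)^{p}\Bigr)
= C\,t^{-3p_0}\bigl(1+t^{(\delta_0-2)p}\bigr),
\end{equation*}
because $k(t,0)=t^2$. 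Here $\delta_0=\min\{\delta_1,\alpha\delta_2,\delta_3\}\le \alpha\delta_2=1/2$, so the correction factor $t^{(\delta_0-2)p}$ diverges as $t\to 0$ (and $p>p_0/\delta_0$ makes it diverge faster than $t^{-3p_0}$ itself). So the lemma, used this way, does \emph{not} deliver $Ct^{-3p_0}$ with $C$ independent of $t$; the degeneracy of the kernel at $s=t$ ruins the uniformity. Your heuristic that the integral behaves like $\sigma^2(0,X_0)t^3/3$ is correct, but you have not converted it into a proof.

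The missing idea — and the paper's actual argument — is a time rescaling before invoking Lemma A: write
\begin{equation*}
\int_0^{t_0}(t_0-r)^2\sigma^2(r,X_{\varepsilon,r})\,dr=t_0^3\int_0^1(1-r)^2\sigma^2(t_0r,X_{\varepsilon,t_0r})\,dr,
\end{equation*}
and apply Lemma A on the \emph{fixed} interval $[0,1]$ to the rescaled process $Y_t:=X_{\varepsilon,t_0t}$ with $h(t,x)=\sigma(t_0t,x)$ and $k(t,s)=(t-s)^2$. One checks that $E|Y_t-X_0|^p\le Ct^{p/2}$ and that the H\"older constants of $h$ and $k$ are uniform in $t_0$, so the lemma gives a bound $C<\infty$ independent of $t_0$ and $\varepsilon$; the factor $t_0^{-3p_0}$ then comes from the explicit prefactor $t_0^3$, not from the lemma. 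With that substitution your exponent bookkeeping $(t^{-3p_0})^{2/p_0}=t^{-6}$ and $(t^{-3p_0})^{4/p_0}=t^{-12}$ goes through and yields \eqref{r3f}.
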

\begin{proof}For every $t_0\in (0,T],$ we have
$$\int_0^{t_0}(t_0-r)^2\sigma^2(r,X_{\varepsilon,r})dr=t_0^3\int_0^1(1-r)^2\sigma^2(t_0r,X_{\varepsilon,t_0r})dr.$$
We consider the stochastic process $Y_t:=X_{\varepsilon,t_0t},0\leq t\leq 1.$ Then, $Y_0=X_0$ is deterministic and $E|Y_t-X_0|^p=E|X_{\varepsilon,t_0t}-X_0|^p\leq Ct^{\frac{p}{2}}$ for all $\varepsilon\in (0,1),t\in[0,1],$ where $C$ is a positive constant not depending on $t_0,t$ and $\varepsilon.$ In addition, the functions $h(t,x)=\sigma(t_0t,x)$ and $k(t,s)=(t-s)^2$ satisfy
\begin{align*}
&|h(t,x)-h(s,y)|\leq L(|t-s|^{\delta_1}+|x-y|)\,\,\,\forall x,y\in \mathbb{R},s,t\in[0,1],\\
&|k(t,s)-k(t,0)|\leq L|t-s|\,\,\,\forall s,t\in[0,1]
\end{align*}
for some positive constant $L$ not depending on $t_0.$ Thus, for all $p_0>0,$ we can use Lemma A to get
$$E\left[\bigg(\int_0^1(1-r)^2\sigma^2(t_0r,X_{\varepsilon,t_0r})dr\bigg)^{-p_0}\right]\leq C<\infty,$$
where $C$ is a positive constant not depending on $t_0$ and $\varepsilon.$ Consequently,
$$E\left[\bigg(\int_0^{t_0}(t_0-r)^2\sigma^2(r,X_{\varepsilon,r})dr\bigg)^{-p_0}\right]\leq Ct_0^{-3p_0}\,\,\,\forall\,t_0\in (0,T],$$
and hence, the bound (\ref{r3f}) follows directly from (\ref{jjdz}).
\end{proof}
\begin{rem} (i) In the assumption $(A_3),$ the condition $\|f'\|_0:=\inf\limits_{(t,x)}f'(t,x)>0$ can be replaced by $\|f'\|_0:=\sup\limits_{(t,x)\in  [0,T]\times \mathbb{R}}f'(t,x)<0.$

\noindent(ii) Consider the stochastic processes
$$\overline{U}_t =  \int_0^tf'(s, x_s)U_sds,\,\,0\leq t\leq T,$$
$$\overline{V}_t =  \frac{1}{2}\int_0^t(f''(s, x_s)U_s^2+2f'(s,x_s)V_s)ds,\,\,0\leq t\leq T,$$
where $(U_t)_{t\in[0,T]}$ and $(V_t)_{t\in[0,T]}$ are as in Theorem \ref{k7zlf}. The reader can verify that
\begin{equation*}
\lim\limits_{\varepsilon\to 0}\frac{1}{\varepsilon^2}I(\tilde{Y}_{\varepsilon,t}\|Z_t)\geq \frac{1}{4\gamma_t^4} \left(E\left|E\left[\delta\left(\overline{V}_tD\overline{U}_t\right)|\overline{U}_t\right]\right|\right)^2,\,\,t\in(0,T].
\end{equation*}
The proof is similar to that of (\ref{mm1}). So we omit it.
\end{rem}

\subsection{Estimates for Malliavin derivatives}
 Hereafter,  we denote by $C$ a generic constant which may vary at each appearance. Let us collect some fundamental results about the Malliavin differentiability of solutions to the equation (\ref{eq3}).
\begin{proposition}\label{pr1}
Suppose the assumptions $(A_1)$ and $(A_2).$ Then, the equation (\ref{eq3}) has a unique solution $(X_{\varepsilon,t})_{t\in[0,T]}$ satisfying, for each $p\geq 2,$
\begin{align}\label{s4hd}
\sup\limits_{t\in[0,T]} E|X_{\varepsilon,t}|^p\le C,\,\,\,\forall\,\varepsilon\in (0,1),
\end{align}
where $C$ is a positive constant not depending on $\varepsilon.$ Moreover, for each $t\in[0,T],$ the random variable $X_{\varepsilon,t}$ is twice Malliavin differentiable and the derivatives satisfy the following linear equations, for all $0\le r,\theta \le t\le T,$
\begin{align}
D_rX_{\varepsilon,t}= \varepsilon \sigma(r,X_{\varepsilon,r})+\int_r^tb'(s,X_{\varepsilon,s})D_rX_{\varepsilon,s}ds+\varepsilon\int_r^t\sigma'(s,X_{\varepsilon,s})
D_rX_{\varepsilon,s}dB_s\label{dhc1x}
\end{align}
and
\begin{align}
&	D_{\theta} D_rX_{\varepsilon,t}=\varepsilon\sigma'(r,X_{\varepsilon,r})D_{\theta}X_{\varepsilon,r}+\int_{r\vee\theta}^t\left[b''(s,X_{\varepsilon,s})
D_{\theta}X_{\varepsilon,s}D_rX_{\varepsilon,s}+b'(s,X_{\varepsilon,s})D_{\theta}D_rX_{\varepsilon,s}\right]ds\notag\\
&+ \varepsilon\sigma'(\theta, X_{\varepsilon,\theta})D_rX_{\varepsilon,\theta}+\varepsilon\int_{r\vee\theta}^t\left[\sigma''(s,X_{\varepsilon,s})D_{\theta}X_{\varepsilon,s}
D_rX_{\varepsilon,s}+\sigma'(s,X_{\varepsilon,s})D_{\theta}D_rX_{\varepsilon,s}\right]dB_s.\label{dhc1x8}
\end{align}
\end{proposition}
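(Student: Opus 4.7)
The plan is to follow the classical stochastic analysis route: establish existence/uniqueness via Picard iteration, derive moment bounds uniform in $\varepsilon\in(0,1)$ by a standard Burkholder-Davis-Gundy plus Grönwall argument, and obtain the Malliavin derivative equations by applying the chain rule in $\mathbb{D}^{k,p}$ and passing to the limit in the Picard scheme.

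First, existence and uniqueness of a strong solution to (\ref{eq3}) are classical under $(A_1)$-$(A_2)$: assumption $(A_2)$ gives global Lipschitz continuity of $b(t,\cdot)$ and $\sigma(t,\cdot)$ with constant $L$, and $(A_1)$ gives linear growth, so the standard Picard iteration yields a unique solution in $L^2(\Omega\times[0,T])$ that belongs to $L^p$ for every $p\geq 2$.

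For the moment bound (\ref{s4hd}), I would raise (\ref{eq3}) to the $p$-th power, apply Jensen's inequality to the drift integral and BDG to the stochastic integral, and use $\varepsilon\in(0,1)$ so that $\varepsilon^p\leq 1$. Together with the linear growth of $b,\sigma$, this yields
\begin{equation*}
E|X_{\varepsilon,t}|^p\leq C_{p,T}\left(1+|X_0|^p+\int_0^tE|X_{\varepsilon,s}|^p\,ds\right),
\end{equation*}
with $C_{p,T}$ independent of $\varepsilon$; Grönwall's lemma then gives (\ref{s4hd}).

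For the first Malliavin derivative, I would invoke the standard result (Theorem 2.2.1 in \cite{nualartm2}) that, when $b,\sigma$ are $C^1$ with bounded derivatives, the solution belongs to $\mathbb{D}^{1,p}$ for every $p\geq 2$ and satisfies
$E\sup_{t\leq T}|D_r X_{\varepsilon,t}|^p<\infty$ uniformly in $r$. Applying $D_r$ to both sides of (\ref{eq3}) and using the chain rule in Malliavin calculus, together with $D_r\int_0^t\sigma(s,X_{\varepsilon,s})dB_s=\sigma(r,X_{\varepsilon,r})+\int_r^t\sigma'(s,X_{\varepsilon,s})D_r X_{\varepsilon,s}\,dB_s$, produces (\ref{dhc1x}). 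For the second derivative, since $(A_2)$ provides $C^2_b$ regularity, $X_{\varepsilon,t}\in\mathbb{D}^{2,p}$ and one may apply $D_\theta$ to (\ref{dhc1x}); the only subtlety is handling the initial-condition term $\varepsilon\sigma(r,X_{\varepsilon,r})$ on the right side of (\ref{dhc1x}), whose derivative contributes $\varepsilon\sigma'(r,X_{\varepsilon,r})D_\theta X_{\varepsilon,r}$ when $\theta\leq r$, and symmetrically the boundary term $\varepsilon\sigma'(\theta,X_{\varepsilon,\theta})D_r X_{\varepsilon,\theta}$ when $r\leq\theta$; both are captured by the integration limit $r\vee\theta$ in (\ref{dhc1x8}). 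Combining with the chain rule applied to $b'(s,X_{\varepsilon,s})D_r X_{\varepsilon,s}$ and $\sigma'(s,X_{\varepsilon,s})D_r X_{\varepsilon,s}$ yields (\ref{dhc1x8}).

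The main obstacle is not conceptual but technical: one must justify the differentiation of the stochastic integral and the passage to the limit of the Picard iterates in $\mathbb{D}^{2,p}$, which requires verifying that the Picard scheme is Cauchy in the $\|\cdot\|_{2,p}$ norm. This is done by iteratively applying BDG plus Grönwall to the SDEs satisfied by the differences of iterates and their first and second Malliavin derivatives, using the boundedness of $b',b'',\sigma',\sigma''$ from $(A_2)$ and $\varepsilon<1$ to keep constants $\varepsilon$-independent. All the detailed computations are standard, so I would simply refer to Chapter 2 of \cite{nualartm2} for the scheme and content myself with indicating the formal differentiation that produces (\ref{dhc1x}) and (\ref{dhc1x8}).
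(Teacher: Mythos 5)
Your proposal is correct and follows essentially the same route as the paper, which simply cites Theorems 2.2.1 and 2.2.2 of \cite{nualartm2} for this proposition; your sketch is the standard Picard--BDG--Gr\"onwall argument underlying those theorems, and you end by deferring to the same reference. No gaps.
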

\begin{proof} See Theorems 2.2.1 and 2.2.2 in \cite{nualartm2}.
\end{proof}
\begin{proposition}\label{pr5}
Suppose the assumptions $(A_1)$ and $(A_2).$   Then, for each $p \ge 2,$ we have
	 \begin{align}\label{ct1}
	 \sup\limits_{0\le r \le t\le T}E|D_rX_{\varepsilon,t}|^p\le C\varepsilon^p\,\,\,\forall\,\varepsilon\in (0,1),
	\end{align}
and
\begin{align}\label{ct1a}
\sup\limits_{0\le r,\theta \le t\le T}E| D_{\theta}D_rX_{\varepsilon,t}|^p\le C\varepsilon^{2p}\,\,\,\forall\,\varepsilon\in (0,1),
\end{align}
	where $C$ is a positive constant not depending on $\varepsilon.$
\end{proposition}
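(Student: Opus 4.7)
The plan is to establish both bounds by standard SDE moment estimates, combining the Burkholder--Davis--Gundy (BDG) inequality, H\"older's inequality, and Gronwall's lemma applied to the linear equations (\ref{dhc1x}) and (\ref{dhc1x8}). The crucial point is to keep track of the explicit powers of $\varepsilon$ appearing in the forcing terms, because the linear parts driven by $b'(s,X_{\varepsilon,s})$ and $\sigma'(s,X_{\varepsilon,s})$ contribute no additional $\varepsilon$ factors.

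For (\ref{ct1}), I would raise (\ref{dhc1x}) to the $p$-th power, take expectation, and split into three pieces. The initial contribution $\varepsilon\sigma(r,X_{\varepsilon,r})$ yields $\varepsilon^p E|\sigma(r,X_{\varepsilon,r})|^p\le C\varepsilon^p$ by the linear growth of $\sigma$ and the moment estimate (\ref{s4hd}). The drift integral, via H\"older's inequality and $|b'|\le L$, gives $C\int_r^t E|D_rX_{\varepsilon,s}|^p ds$. The stochastic integral $\varepsilon\int_r^t\sigma'(s,X_{\varepsilon,s})D_rX_{\varepsilon,s}dB_s$, after BDG and $|\sigma'|\le L$, contributes $C\varepsilon^p\int_r^t E|D_rX_{\varepsilon,s}|^p ds$, which is absorbed since $\varepsilon<1$. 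Setting $\phi_r(t):=E|D_rX_{\varepsilon,t}|^p$, this produces the integral inequality $\phi_r(t)\le C\varepsilon^p+C\int_r^t\phi_r(s)ds$, and Gronwall's lemma delivers $\phi_r(t)\le C\varepsilon^p e^{CT}$ uniformly in $r,t,\varepsilon$.

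The proof of (\ref{ct1a}) proceeds in the same spirit starting from (\ref{dhc1x8}), but the forcing terms are now products rather than single factors. The key observation is that each forcing term already carries a factor of $\varepsilon^2$ in $L^p$-norm: the boundary-like terms $\varepsilon\sigma'(r,X_{\varepsilon,r})D_\theta X_{\varepsilon,r}$ and $\varepsilon\sigma'(\theta,X_{\varepsilon,\theta})D_rX_{\varepsilon,\theta}$ supply one explicit $\varepsilon$ plus one $\varepsilon$ from the first-derivative factor via (\ref{ct1}); the mixed quadratic terms $b''(s,X_{\varepsilon,s})D_\theta X_{\varepsilon,s}D_rX_{\varepsilon,s}$ and $\varepsilon\sigma''(s,X_{\varepsilon,s})D_\theta X_{\varepsilon,s}D_rX_{\varepsilon,s}$ supply $\varepsilon^2$ through Cauchy--Schwarz in $L^{2p}$ combined with (\ref{ct1}). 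The remaining pieces $b'(s,X_{\varepsilon,s})D_\theta D_rX_{\varepsilon,s}$ and $\varepsilon\sigma'(s,X_{\varepsilon,s})D_\theta D_rX_{\varepsilon,s}$ are linear in the unknown. Applying BDG, H\"older, and $|b'|,|\sigma'|\le L$, one arrives at $\psi_{r,\theta}(t)\le C\varepsilon^{2p}+C\int_{r\vee\theta}^t\psi_{r,\theta}(s)ds$ for $\psi_{r,\theta}(t):=E|D_\theta D_rX_{\varepsilon,t}|^p$, and Gronwall again yields the claimed $\varepsilon^{2p}$ bound.

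No step here is genuinely hard; this is standard SDE analysis. The only mildly delicate point is the bookkeeping of $\varepsilon$-powers in (\ref{dhc1x8}): one must verify that \emph{every} non-linear forcing term carries a factor of at least $\varepsilon^2$ in its $L^p$-norm \emph{before} applying Gronwall, since any forcing term of order $\varepsilon$ or $O(1)$ would ruin the final estimate. H\"older's inequality with conjugate exponents $(2,2)$ in $L^{2p}$ is the natural device to separate the two first-order derivative factors so that (\ref{ct1}) applies to each independently, giving the required $\varepsilon^2$ gain.
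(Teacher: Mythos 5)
Your proposal is correct and follows essentially the same route as the paper: raise the linear equations (\ref{dhc1x}) and (\ref{dhc1x8}) to the $p$-th power, bound the forcing terms via linear growth, the moment estimate (\ref{s4hd}), H\"older, BDG and the boundedness of $b',b'',\sigma',\sigma''$ (using Cauchy--Schwarz together with (\ref{ct1}) to extract $\varepsilon^{2p}$ from the quadratic terms), and close with Gronwall's lemma. The $\varepsilon$-bookkeeping you emphasize is exactly what the paper's proof carries out.
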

\begin{proof}The proof is similar to that of Theorems 2.2.1 and 2.2.2 in \cite{nualartm2}. For each $p\ge2,$ by the fundamental inequality $(|a_1|+|a_2|+|a_3|)^p\leq 3^{p-1}(|a_1|^p+|a_2|^p+|a_3|^p),$ we obtain from (\ref{dhc1x}) that
\begin{multline*}
|D_rX_{\varepsilon,t}|^p\le 3^{p-1}\left|\varepsilon\sigma(r,X_{\varepsilon,r})\right|^p+3^{p-1}\left|\int_r^tb'(s,X_{\varepsilon,s})D_rX_{\varepsilon,s}ds\right|^p\\
			+3^{p-1}\left|\varepsilon\int_r^t\sigma'(s,X_{\varepsilon,s})D_rX_{\varepsilon,s}dB_s\right|^p
\end{multline*}
for all $0\leq r\leq t\leq T.$ From the linear growth property of $\sigma$ and the estimate (\ref{s4hd}), we have
\begin{equation}\label{iif}
\sup\limits_{0\leq r\leq T} E\left|\sigma(r,X_{\varepsilon,r})\right|^p\le C\,\,\,\forall\,\varepsilon\in (0,1),
\end{equation}
where $C$ is a positive constant not depending on $\varepsilon.$ Furthermore, by using the H\"{o}lder and Burkholder-Davis-Gundy inequalities, we deduce
		\begin{align*}
			E\left|\int_r^tb'(s,X_{\varepsilon,s})D_rX_{\varepsilon,s}ds\right|^p&\le L^p(t-r)^{p-1}\int_r^tE|D_rX_{\varepsilon,s}|^pds\\
			& \le L^pT^{p-1} \int_r^tE|D_rX_{\varepsilon,s}|^pds,\,\,0\leq r\leq t\leq T
		\end{align*}
and, for some $C_p>0,$
\begin{align*}			E\left|\varepsilon\int_r^t\sigma'(s,X_{\varepsilon,s})D_rX_{\varepsilon,s}dB_s\right|^p&\le C_p\varepsilon^pE\left(\int_r^t\left|\sigma'(s,X_{\varepsilon,s})
D_rX_{\varepsilon,s}\right|^2ds\right)^{\frac{p}{2}}\\
&\le C_p(t-r)^{\frac{p}{2}-1}\varepsilon^pL^p\int_r^tE|D_rX_{\varepsilon,s}|^pds\\
&\le C_pT^{\frac{p}{2}-1}L^p \varepsilon^p\int_r^tE|D_rX_{\varepsilon,s}|^pds,\,\,0\leq r\leq t\leq T.
		\end{align*}
We therefore obtain, for all $\varepsilon\in (0,1),$
\begin{align*}
	E|D_rX_{\varepsilon,t}|^p\le C\varepsilon^p+C\int_r^tE|D_rX_{\varepsilon,s}|^pds,\,\,0\leq r\leq t\leq T,
\end{align*}
where $C$ is a positive constant not depending on $r,t$ and $\varepsilon.$ Using Gronwall's lemma, we get
$$ E|D_rX_{\varepsilon,t}|^p\le C\varepsilon^pe^{C(t-r)}\le C\varepsilon^p \mbox{  }\forall \mbox{  } 0\le r\le t\le T. $$
This finishes the proof of (\ref{ct1}). The proof of (\ref{ct1a}) can be done similarly. Indeed, we obtain from the equation (\ref{dhc1x8}) that
\begin{align*}
	E| D_{\theta}D_rX_{\varepsilon,t}|^p& \le 4^{p-1}E\left|\varepsilon\sigma'(r,X_{\varepsilon,r})D_{\theta}X_{\varepsilon,r}\right|^{p} +4^{p-1}E\left| \varepsilon\sigma'(\theta, X_{\theta})D_rX_{\varepsilon,\theta}\right|^{p}\\
	&+4^{p-1} E\left|\int_{r\vee	\theta}^t\left[b''(s,X_{\varepsilon,s})D_{\theta}X_{\varepsilon,s}D_rX_{\varepsilon,s}+b'(s,X_{\varepsilon,s})D_{\theta}D_rX_{\varepsilon,s}\right]ds\right|^{p}\\ &+4^{p-1}E\left|\varepsilon\int_{r\vee\theta}^t\left[\sigma''(s,X_{\varepsilon,s})D_{\theta}X_{\varepsilon,s}D_rX_{\varepsilon,s}
+\sigma'(s,X_{\varepsilon,s})D_{\theta}D_rX_{\varepsilon,s}\right]dB_s\right|^{p}
\end{align*}
 for all $ 0\le \theta,r \le t\le T.$ By using the estimate (\ref{ct1}) and the boundedness of $b', b'', \sigma',\sigma''$ we obtain
\begin{align*}
	E\left|\varepsilon\sigma'(r,X_{\varepsilon,r})D_{\theta}X_{\varepsilon,r}\right|^{p}+E\left| \varepsilon\sigma'(\theta, X_{\theta})D_rX_{\varepsilon,\theta}\right|^{p}\le C\varepsilon^{2p},
\end{align*}
\begin{align*} E\bigg|\int_{r\vee\theta}^t\left[b''(s,X_{\varepsilon,s})D_{\theta}X_{\varepsilon,s}D_rX_{\varepsilon,s}+b'(s,X_{\varepsilon,s})D_{\theta}D_rX_{\varepsilon,s}\right]ds\bigg|^{p}\\
	\le C\varepsilon^{2p}+C\int_{r\vee\theta}^t E\left|D_{\theta} D_rX_{\varepsilon,s}\right|^pds,
\end{align*}
and
\begin{align*}
&E\left|\varepsilon\int_{r\vee\theta}^t\left[\sigma''(s,X_{\varepsilon,s})D_{\theta}X_{\varepsilon,s}D_rX_{\varepsilon,s}+\sigma'(s,X_{\varepsilon,s})D_{\theta}D_rX_{\varepsilon,s}\right]dB_s\right|^{p}
\\
&\qquad\qquad \qquad=\varepsilon^{p}E\left|\int_{r\vee\theta}^t\left[\sigma''(s,X_{\varepsilon,s})D_{\theta}X_{\varepsilon,s}D_rX_{\varepsilon,s}+\sigma'(s,X_{\varepsilon,s})D_{\theta}D_rX_{\varepsilon,s}\right]^{2}ds\right|^{\frac{p}{2}}\qquad\qquad\qquad
\\
&\qquad\qquad\qquad\le C\varepsilon^{3p}+C\varepsilon^p\int_{r\vee\theta}^t E\left|D_{\theta} D_rX_{\varepsilon,s}\right|^pds.
\end{align*}
As a consequence, for all $\varepsilon\in (0,1),$
$$E| D_{\theta}D_rX_{\varepsilon,t}|^p\le  C\varepsilon^{2p}+C\int_{r\vee\theta}^t E\left|D_{\theta} D_rX_{\varepsilon,s}\right|^pds,\,\,0\le \theta,r\le t\le T$$
and we obtain (\ref{ct1a}) by using Gronwall's lemma again.

The proof the proposition is complete.
\end{proof}
\subsection{Proof of Theorem \ref{klf9}}
The proof of Theorem \ref{klf9} will be given at the end of this subsection. In order to be able to apply Theorem \ref{lm2}, we need the following technical results.
\begin{lem}\label{pxro1}Suppose the assumptions $(A_1)$-$(A_2).$ Then, for each $p\geq 2,$ we have
\begin{equation}\label{oo1}
E|X_{\varepsilon,t}-x_t|^p\le Ct^{\frac{p}{2}} \varepsilon^p\,\,\,\forall\,\varepsilon\in (0,1),t\in[0,T],
\end{equation}
where $C$ is a positive constant not depending on $t$ and $\varepsilon.$ Moreover, if $h(t,x)$ is a differentiable function in $x$ and  its derivative has polynomial growth, we also have
\begin{equation}\label{ozo1}
E|h(t,X_{\varepsilon,t})-h(t,x_t)|^p\le Ct^{\frac{p}{2}} \varepsilon^p\,\,\,\forall\,\varepsilon\in (0,1),t\in[0,T].
\end{equation}
\end{lem}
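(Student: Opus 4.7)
\textbf{Proof plan for Lemma \ref{pxro1}.}

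The plan is to establish (\ref{oo1}) by a standard Gronwall argument adapted to capture the joint $t^{p/2}\varepsilon^p$ scaling, then deduce (\ref{ozo1}) by a mean value estimate combined with the uniform moment bound from Proposition \ref{pr1}.

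First, I would subtract (\ref{eq4}) from (\ref{eq3}) to write
\begin{equation*}
X_{\varepsilon,t}-x_t=\int_0^t\bigl[b(s,X_{\varepsilon,s})-b(s,x_s)\bigr]ds+\varepsilon\int_0^t\sigma(s,X_{\varepsilon,s})dB_s.
\end{equation*}
Set $\phi(t):=E|X_{\varepsilon,t}-x_t|^p$. For the drift term, Hölder's inequality together with the bound $|b'|\le L$ yields
\begin{equation*}
E\left|\int_0^t\bigl[b(s,X_{\varepsilon,s})-b(s,x_s)\bigr]ds\right|^p\le L^pT^{p-1}\int_0^t\phi(s)ds.
\end{equation*}
For the stochastic term, the Burkholder--Davis--Gundy inequality combined with the linear growth of $\sigma$ and the uniform moment bound (\ref{s4hd}) gives
\begin{equation*}
E\left|\varepsilon\int_0^t\sigma(s,X_{\varepsilon,s})dB_s\right|^p\le C_p\varepsilon^p\,t^{p/2-1}\int_0^tE|\sigma(s,X_{\varepsilon,s})|^p\,ds\le Ct^{p/2}\varepsilon^p,
\end{equation*}
where $C$ does not depend on $t,\varepsilon$. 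Combining the two, I obtain $\phi(t)\le Ct^{p/2}\varepsilon^p+C\int_0^t\phi(s)ds$, and since $t\mapsto Ct^{p/2}\varepsilon^p$ is non-decreasing, the integral form of Gronwall's lemma yields $\phi(t)\le Ct^{p/2}\varepsilon^p e^{CT}$, which is (\ref{oo1}).

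For (\ref{ozo1}), the derivative $h'$ has polynomial growth, so there exist $K,m>0$ with $|h'(t,y)|\le K(1+|y|^m)$. Applying the mean value theorem in the variable $x$ and Cauchy--Schwarz,
\begin{equation*}
E|h(t,X_{\varepsilon,t})-h(t,x_t)|^p\le K^p\bigl(E(1+|X_{\varepsilon,t}|^m+|x_t|^m)^{2p}\bigr)^{1/2}\bigl(E|X_{\varepsilon,t}-x_t|^{2p}\bigr)^{1/2}.
\end{equation*}
The first factor is bounded uniformly in $\varepsilon$ and $t\in[0,T]$ thanks to (\ref{s4hd}) and the fact that the ODE solution $x_t$ is bounded on $[0,T]$. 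The second factor is $\le (Ct^p\varepsilon^{2p})^{1/2}=Ct^{p/2}\varepsilon^p$ by the case just proved, giving (\ref{ozo1}).

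The only real obstacle is keeping track of the precise $t$-scaling: one must route the drift estimate through Hölder so that the square-root gain $t^{1/2}$ comes entirely from the BDG estimate on the noise term, and then verify that Gronwall's lemma propagates the non-decreasing majorant $Ct^{p/2}\varepsilon^p$ without spoiling the $\varepsilon^p$ factor. Everything else is routine.
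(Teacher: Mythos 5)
Your proposal is correct and follows essentially the same route as the paper: the same decomposition of $X_{\varepsilon,t}-x_t$, H\"older for the drift, Burkholder--Davis--Gundy with the linear growth of $\sigma$ and (\ref{s4hd}) for the noise term (which is where the $t^{p/2}\varepsilon^p$ factor originates), Gronwall's lemma, and then a mean value/Taylor argument with Cauchy--Schwarz and the polynomial growth of $h'$ for (\ref{ozo1}). No gaps.
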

\begin{proof} We have
$$X_{\varepsilon,t}-x_{t}=\int_0^t(b(s,X_{\varepsilon,s})-b(s,x_s))ds+\varepsilon\int_0^t\sigma(s,X_{\varepsilon,s})dB_s,\,\,t\in[0,T].$$
Hence, by H\"{o}lder and Burkholder-Davis-Gundy inequalities, we deduce
\begin{align*}
E|X_{\varepsilon,t}-x_t|^p&=E\left|\int_0^t(b(s,X_{\varepsilon,s})-b(s,x_s))ds+\varepsilon\int_0^t\sigma(s,X_{\varepsilon,s})dB_s\right| ^p
\\
&\le  2^{p-1}E\left|\int_0^t(b(s,X_{\varepsilon,s})-b(s,x_s))ds\right|^p+2^{p-1}\varepsilon^pE\left|\int_0^t\sigma(s,X_{\varepsilon,s})dB_s\right|^p
\\
&\le Ct^{p-1}\int_0^tE|b(s,X_{\varepsilon,s})-b(s,x_s)|^pds+Ct^{\frac{p}{2}-1}\varepsilon^p \int_0^tE|\sigma(s,X_{\varepsilon,s})|^pds,\,\,t\in[0,T],
\end{align*}
where $C$ is a positive constant not depending on $t$ and $\varepsilon.$ So by the boundedness of $b'$ and the estimate (\ref{iif}) we obtain
\begin{align*}
E|X_{\varepsilon,t}-x_t|^p&\le C\int_0^tE|X_{\varepsilon,s}-x_s|^pds +Ct^{\frac{p}{2}}\varepsilon^p,\,\,t\in[0,T],
\end{align*}
which, together with Gronwall's lemma, yields
$$ E|X_{\varepsilon,t}-x_t|^p\le Ct^{\frac{p}{2}}\varepsilon^p e^{Ct}\leq Ct^{\frac{p}{2}} \varepsilon^p,\,\,t\in[0,T].$$
It remains to prove (\ref{ozo1}). For each $t\in[0,T],$ using the Taylor's expansion, we have
$$h(t,X_{\varepsilon,t})-h(t,x_t)=h'(t,x_t+\eta _t(X_{\varepsilon,t}-x_t))(X_{\varepsilon,t}-x_t),$$
where $\eta _t$ is a random variable lying between 0 and 1. By the polynomial growth property of $h'$ and the estimate (\ref{s4hd}), we have $\sup\limits_{t\in[0,T]}E|h'(t,x_t+\eta _t(X_{\varepsilon,t}-x_t))|^p\leq C$ for all $\varepsilon\in (0,1),$ where $C$ is a positive constant not depending on $\varepsilon.$ We now use the Cauchy-Schwarz inequality and the estimate (\ref{oo1}) to get
$$E|h(t,X_{\varepsilon,t})-h(t,x_t)|^p\leq \sqrt{E|h'(t,x_t+\eta _t(X_{\varepsilon,t}-x_t))|^{2p}E|X_{\varepsilon,t}-x_t|^{2p}}\leq Ct^{\frac{p}{2}} \varepsilon^p.$$
This finishes the proof of the proposition.
\end{proof}
\begin{proposition}\label{pro1}Suppose the assumptions $(A_1)$-$(A_2).$ Let $(\tilde{X}_{\varepsilon,t})_{t\in[0,T]}$ be as in Theorem \ref{klf9}.
Then, we have
\begin{align}
|E[\tilde{X}_{\varepsilon,t}]|&\leq Ct^2\varepsilon\,\,\,\forall\,\varepsilon\in (0,1),t\in[0,T],\label{oo2}\\
|{\rm Var}(\tilde{X}_{\varepsilon,t})-\beta_t^2|&\leq Ct^{3/2}\varepsilon\,\,\,\forall\,\varepsilon\in (0,1),t\in[0,T],\label{oo3}
\end{align}
where $C$ is a positive constant not depending on $t$ and $\varepsilon.$
\end{proposition}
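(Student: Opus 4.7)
The plan is to prove the two estimates in turn, using a second-order Taylor expansion of $b$ in the spatial variable together with a coupling between $\tilde{X}_{\varepsilon,t}$ and the linearised Gaussian process $U_t$ from (\ref{ydlq1}).

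\textbf{Mean estimate.} Starting from
$$X_{\varepsilon,t}-x_t=\int_0^t[b(s,X_{\varepsilon,s})-b(s,x_s)]\,ds+\varepsilon\int_0^t\sigma(s,X_{\varepsilon,s})\,dB_s,$$
I would write $b(s,X_{\varepsilon,s})-b(s,x_s)=b'(s,x_s)(X_{\varepsilon,s}-x_s)+\tfrac{1}{2}b''(s,\xi_s)(X_{\varepsilon,s}-x_s)^2$, divide by $\varepsilon$, and take expectation so the stochastic integral vanishes. Setting $\phi(t):=E[\tilde{X}_{\varepsilon,t}]$ this produces
$$\phi(t)=\int_0^t b'(s,x_s)\phi(s)\,ds+\frac{\varepsilon}{2}\int_0^t E\bigl[b''(s,\xi_s)\tilde{X}_{\varepsilon,s}^2\bigr]\,ds.$$
Since $|b''|\le L$ and Lemma~\ref{pxro1} yields $E|\tilde{X}_{\varepsilon,s}|^2\le Cs$, the forcing term is $O(\varepsilon t^2)$, and Gronwall gives $|\phi(t)|\le Ct^2\varepsilon$.

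\textbf{Variance estimate.} Let $U_t$ be as in (\ref{ydlq1}). From the explicit form $U_t=\int_0^t\sigma(s,x_s)e^{\int_s^tb'(u,x_u)du}\,dB_s$ one has $E[U_t]=0$, and It\^o's isometry gives ${\rm Var}(U_t)=E[U_t^2]=\beta_t^2$. Subtracting the defining equations for $\tilde{X}_{\varepsilon,\cdot}$ and $U_{\cdot}$ and applying the same Taylor expansion yields
\begin{align*}
\tilde{X}_{\varepsilon,t}-U_t&=\int_0^tb'(s,x_s)(\tilde{X}_{\varepsilon,s}-U_s)\,ds+\frac{\varepsilon}{2}\int_0^tb''(s,\xi_s)\tilde{X}_{\varepsilon,s}^2\,ds\\
&\quad+\int_0^t\bigl[\sigma(s,X_{\varepsilon,s})-\sigma(s,x_s)\bigr]\,dB_s.
\end{align*}
Squaring, applying It\^o's isometry on the martingale term, the Lipschitz bound on $\sigma$ combined with $E|X_{\varepsilon,s}-x_s|^2\le Cs\varepsilon^2$ from Lemma~\ref{pxro1}, the moment bound $E|\tilde{X}_{\varepsilon,s}|^4\le Cs^2$, and Gronwall should produce $E|\tilde{X}_{\varepsilon,t}-U_t|^2\le Ct^2\varepsilon^2$.

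\textbf{Assembling the bound and main difficulty.} I would then invoke the identity
$${\rm Var}(\tilde{X}_{\varepsilon,t})-\beta_t^2=E\bigl[(\tilde{X}_{\varepsilon,t}-U_t)(\tilde{X}_{\varepsilon,t}+U_t)\bigr]-\bigl(E[\tilde{X}_{\varepsilon,t}]\bigr)^2,$$
which uses $E[U_t]=0$. Cauchy-Schwarz combined with $\|\tilde{X}_{\varepsilon,t}\|_{L^2}+\|U_t\|_{L^2}\le C\sqrt{t}$ bounds the cross term by $Ct^{3/2}\varepsilon$, while $(E[\tilde{X}_{\varepsilon,t}])^2=O(t^4\varepsilon^2)=O(t^{3/2}\varepsilon)$ for $\varepsilon\in(0,1)$ and $t\in[0,T]$. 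The only real obstacle is bookkeeping: one must carry the precise $t$-powers through H\"older's and Burkholder-Davis-Gundy inequalities, rather than absorbing them crudely into constants, so that the sharp exponents $t^2$ and $t^{3/2}$ survive to the final bound.
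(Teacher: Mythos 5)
Your proposal is correct, and for the variance bound it takes a genuinely different route from the paper. For the mean estimate you do essentially what the paper does: Taylor-expand $b$ to second order, take expectations to kill the stochastic integral, and treat the resulting linear equation for $E[\tilde{X}_{\varepsilon,t}]$ (the paper solves it explicitly by variation of constants rather than invoking Gronwall, but the forcing term $\frac{1}{2\varepsilon}\int_0^tE[b''(\cdot)(X_{\varepsilon,s}-x_s)^2]ds=O(t^2\varepsilon)$ is identical). For the variance, the paper works entirely through the Clark--Ocone formula: it writes ${\rm Var}(\tilde{X}_{\varepsilon,t})=\frac{1}{\varepsilon^2}E\int_0^t(E[D_rX_{\varepsilon,t}|\mathcal{F}_r])^2dr$, uses the explicit representation $D_rX_{\varepsilon,t}=\varepsilon\sigma(r,X_{\varepsilon,r})e^{\int_r^tb'(u,X_{\varepsilon,u})du}Z_{r,t}$ with the exponential martingale $Z_{r,t}$ satisfying $E[Z_{r,t}|\mathcal{F}_r]=1$, and compares the integrand termwise with that of $\beta_t^2$. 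You instead couple $\tilde{X}_{\varepsilon,t}$ to the Gaussian process $U_t$ (which the paper only introduces later, for Theorem \ref{k7zlf}), note ${\rm Var}(U_t)=\beta_t^2$ by It\^o isometry, and use the elementary identity ${\rm Var}(\tilde{X}_{\varepsilon,t})-\beta_t^2=E[(\tilde{X}_{\varepsilon,t}-U_t)(\tilde{X}_{\varepsilon,t}+U_t)]-(E[\tilde{X}_{\varepsilon,t}])^2$. The key quantitative input you need, $E|\tilde{X}_{\varepsilon,t}-U_t|^2\le Ct^2\varepsilon^2$, does follow as you sketch (the dominant forcing is the martingale term, $\int_0^tE|\sigma(s,X_{\varepsilon,s})-\sigma(s,x_s)|^2ds\le C\varepsilon^2t^2$, and the Taylor remainder contributes only $Ct^4\varepsilon^2$); combined with $\|\tilde{X}_{\varepsilon,t}+U_t\|_{L^2}\le C\sqrt{t}$ this gives exactly $Ct^{3/2}\varepsilon$. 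Note that your $t$-refined estimate is genuinely sharper in $t$ than the paper's own bound (\ref{ccv}), which is stated uniformly in $t$, and that uniform version would only yield $C\sqrt{t}\,\varepsilon$ here, so carrying the $t$-power is indeed essential, as you flag. Your route is more elementary (no Malliavin calculus is needed for this proposition), while the paper's route has the advantage of setting up the representation (\ref{jknm}) and the process $Z_{r,t}$, which are reused in Propositions \ref{momenam1} and in the proof of (\ref{cxcv}).
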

\begin{proof} We first verify the estimate (\ref{oo2}). By using the Taylor's expansion, we obtain
\begin{align}
&\tilde{X}_{\varepsilon,t}=\frac{1}{\varepsilon}\int_0^t(b(s,X_{\varepsilon,s})-b(s,x_s))ds+\int_0^t\sigma(s,X_{\varepsilon,s})dB_s\notag\\
&=\int_0^tb'(s,x_s)\tilde{X}_{\varepsilon,s}ds
+\frac{1}{2\varepsilon}\int_0^tb''(s,x_s+\theta_s(X_{\varepsilon,s}-x_s))(X_{\varepsilon,s}-x_s)^2ds+\int_0^t\sigma(s,X_{\varepsilon,s})dB_s,\label{ydlq}
\end{align}
where, for each $0\leq s\leq t,$ $\theta_s$ is a random variable lying between $0$ and $1.$ Taking the expectation of $\tilde{X}_{\varepsilon,t}$ gives us
\begin{align*}
E[\tilde{X}_{\varepsilon,t}]
&=\int_0^tb'(s,x_s)E[\tilde{X}_{\varepsilon,s}]ds
+\frac{1}{2\varepsilon}\int_0^tE[b''(s,x_s+\theta_s(X_{\varepsilon,s}-x_s))(X_{\varepsilon,s}-x_s)^2]ds
\end{align*}
This is a linear differential equation and its solution is given by
$$E[\tilde{X}_{\varepsilon,t}]=\frac{1}{2\varepsilon}\int_0^te^{\int_s^tb'(u,x_u)du}
E[b''(s,x_s+\theta_s(X_{\varepsilon,s}-x_s))(X_{\varepsilon,s}-x_s)^2]ds.$$
Consequently, we can use the boundedness of the derivatives $b',b''$ and the estimate (\ref{oo1}) to get
\begin{align*}
|E[\tilde{X}_{\varepsilon,t}]|&\leq\frac{Le^{LT}}{2\varepsilon}\int_0^tE|X_{\varepsilon,s}-x_s|^2ds\\
&\leq Ct^2\varepsilon,\,\,t\in[0,T].
\end{align*}
So the estimate (\ref{oo2}) holds. It remains to prove the estimate (\ref{oo3}). Solving the equation (\ref{dhc1x}) we obtain
\begin{align}
D_rX_{\varepsilon,t} &=\varepsilon \sigma(r,X_{\varepsilon,r})\exp\left(\int_r^t\left(b'(u,X_{\varepsilon,u}) -\frac{1}{2}\varepsilon^2\sigma'^2(u,X_{\varepsilon,u})\right)du  +\varepsilon\int_r^t\sigma'(u,X_{\varepsilon,u})dB_u \right)\notag\\
&=\varepsilon \sigma(r,X_{\varepsilon,r})\exp\left(\int_r^tb'(u,X_{\varepsilon,u})du\right)Z_{r,t},\,\,\,0\leq r\leq t\leq T.\label{jknm}
\end{align}
where $Z_{r,t}$ is given by
\begin{equation}\label{alne}
Z_{r,t}:=\exp\left(\varepsilon\int_r^t\sigma'(u,X_{\varepsilon,u})dB_u-\frac{1}{2}\varepsilon^2\int_r^t\sigma'^2(u,X_{\varepsilon,u})du\right),\,\,0\leq r\leq t\leq T.
\end{equation}
Note that, by the It\^o differential formula, $Z_{r,t}$ satisfies
\begin{equation}\label{axlne}
Z_{r,t}=1+\int_r^t\varepsilon\sigma'(s,X_{\varepsilon,s})Z_{r,s}dB_s,\,\,0\leq r\leq t\leq T.
\end{equation}
So $E[Z_{r,t}|\mathcal{F}_r]=1.$ Furthermore, for each $p
\geq 2,$ it is easy to see that
\begin{equation}\label{iif2}
\sup\limits_{0\leq r\leq t\leq T}E|Z_{r,t}|^p\leq C
\end{equation}
for some $C>0$ not depending on $\varepsilon\in (0,1).$ By the Clark-Ocone formula, the It\^o isometry and (\ref{jknm}), we have
\begin{align*}
 {\rm Var}(\tilde{X}_{\varepsilon,t})&=E\left[\int_0^t (E[D_r\tilde{X}_{\varepsilon,t}|\mathcal{F}_r])^2dr\right]=\frac{1}{\varepsilon^2}E\left[\int_0^t (E[D_rX_{\varepsilon,t}|\mathcal{F}_r])^2dr\right]\\
 &=E\left[\int_0^t \sigma^2(r,X_{\varepsilon,r})\left(E\left[e^{\int_r^t b'(u,X_{\varepsilon,u})du  }Z_{r,t}\Big|\mathcal{F}_r\right]\right)^2dr\right],
\end{align*}
and hence,
\begin{align}
&{\rm Var}(\tilde{X}_{\varepsilon,t})-\beta_t^2\notag\\
&=E\left[\int_0^t \sigma^2(r,X_{\varepsilon,r})\left(E\left[e^{\int_r^t b'(u,X_{\varepsilon,u})du}Z_{r,t}\Big|\mathcal{F}_r\right]\right)^2dr\right]-\int_0^t \sigma^2(r,x_r)e^{2\int_r^t b'(u,x_u)du}dr\notag\\
&=E\left[\int_0^t \left(\sigma^2(r,X_{\varepsilon,r})-\sigma^2(r,x_r)\right)\left(E\left[e^{\int_r^t b'(u,X_{\varepsilon,u})du  }Z_{r,t}\Big|\mathcal{F}_r\right]\right)^2dr\right]\notag\\
&\qquad\qquad+E\left[\int_0^t \sigma^2(r,x_r)\left(\left(E\left[e^{\int_r^t b'(u,X_{\varepsilon,u})du}Z_{r,t}\Big|\mathcal{F}_r\right]\right)^2-e^{2\int_r^t b'(u,x_u)du}\right)dr\right].\label{oms}
\end{align}
Consequently, since $b'$ is bounded by $L,$ $E[Z_{r,t}|\mathcal{F}_r]=1$ and $\sup\limits_{0\leq r\leq T}\sigma^2(r,x_r)<\infty,$ we can infer from (\ref{oms}) that
\begin{align}
&|{\rm Var}(\tilde{X}_{\varepsilon,t})-\beta_t^2|\notag\\
&\leq C\int_0^t E|\sigma^2(r,X_{\varepsilon,r})-\sigma^2(r,x_r)|dr+C\int_0^t E\left|E\left[e^{\int_r^t b'(u,X_{\varepsilon,u})du  }Z_{r,t}\Big|\mathcal{F}_r\right]-e^{\int_r^t b'(u,x_u)du}\right|dr\notag\\
&\leq C\int_0^t E|\sigma^2(r,X_{\varepsilon,r})-\sigma^2(r,x_r)|dr+C\int_0^t E\left|\bigg(e^{\int_r^t b'(u,X_{\varepsilon,u})du  }-e^{\int_r^t b'(u,x_u)du}\bigg)Z_{r,t}\right|dr\notag\\
&\leq C\int_0^t E|\sigma^2(r,X_{\varepsilon,r})-\sigma^2(r,x_r)|dr+C\int_0^t \int_r^t E|(b'(u,X_{\varepsilon,u})-b'(u,x_u))Z_{r,t}|dudr\notag\\
&\leq C\int_0^t E|\sigma^2(r,X_{\varepsilon,r})-\sigma^2(r,x_r)|dr+C\int_0^t \int_r^t \sqrt{E|b'(u,X_{\varepsilon,u})-b'(u,x_u)|^2E|Z_{r,t}|^2}dudr,\label{nvn}
\end{align}
where $C$ is a positive constant not depending on $t$ and $\varepsilon.$ Hence, in view of the estimates (\ref{ozo1}) and (\ref{iif2}), we get
\begin{equation}\label{nvn1}
|{\rm Var}(\tilde{X}_{\varepsilon,t})-\beta_t^2| \le Ct^{\frac{3}{2}}\varepsilon + Ct^{\frac{5}{2}}\varepsilon \le Ct^{\frac{3}{2}}\varepsilon .
\end{equation}
So the estimate (\ref{oo3}) is proved. This completes the proof of the proposition.
\end{proof}

\begin{proposition}\label{momenam1}Let $(\tilde{X}_{\varepsilon,t})_{t\in[0,T]}$ be as in Theorem \ref{klf9}. Define
$$\Theta_{\tilde{X}_{\varepsilon,t}}:=\int_0^tD_r\tilde{X}_{\varepsilon,t}E[D_r\tilde{X}_{\varepsilon,t}|\mathcal{F}_r]dr,\,\,t\in [0,T].$$
Then, under the assumption of Theorem \ref{klf9}, we have
\begin{align}\label{mmam1}
 E|\Theta_{\tilde{X}_{\varepsilon,t}}|^{-p}\le C\left(E\bigg|\int_0^t\sigma^2(r,X_{\varepsilon,r})dr\bigg|^{-p_0}\right)^{\frac{p}{p_0}}\,\,\,\forall\,\varepsilon\in (0,1),t \in (0,T],
\end{align}
where $0<p<p_0$ and $C>0$ is a positive constant not depending on $t$ and $\varepsilon.$
\end{proposition}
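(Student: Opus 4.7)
The plan is to extract from the explicit representation (\ref{jknm}) a pointwise lower bound $\Theta_{\tilde X_{\varepsilon,t}}\ge c_0 A_t\exp(\int_0^t M_{r,t}\,d\mu)$ in which $A_t:=\int_0^t\sigma^2(r,X_{\varepsilon,r})dr$ appears exactly to the first power and the remaining factor has good exponential moments, and then to finish with H\"older's inequality chosen so that $(E[A_t^{-p_0}])^{p/p_0}$ appears on the right.

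First I would write $\Phi_r:=\exp(\int_r^t b'(u,X_{\varepsilon,u})du)\in[e^{-LT},e^{LT}]$ by the boundedness of $b'$ in $(A_2)$. From (\ref{jknm}), $D_r\tilde X_{\varepsilon,t}=\sigma(r,X_{\varepsilon,r})\Phi_r Z_{r,t}$ and, since $\sigma(r,X_{\varepsilon,r})$ is $\mathcal F_r$-measurable and $E[Z_{r,t}|\mathcal F_r]=1$, one has $u_r=\sigma(r,X_{\varepsilon,r})E[\Phi_r Z_{r,t}|\mathcal F_r]$ with $E[\Phi_r Z_{r,t}|\mathcal F_r]\in[e^{-LT},e^{LT}]$. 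Therefore the integrand $D_r\tilde X_{\varepsilon,t}\cdot u_r$ is pointwise $\ge e^{-2LT}\sigma^2(r,X_{\varepsilon,r})Z_{r,t}$, giving
$$\Theta_{\tilde X_{\varepsilon,t}}\;\ge\;e^{-2LT}\int_0^t\sigma^2(r,X_{\varepsilon,r})Z_{r,t}\,dr.$$

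The key step is to remove $Z_{r,t}$ without losing a power of $A_t$. Setting $M_{r,t}:=\varepsilon\int_r^t\sigma'(u,X_{\varepsilon,u})dB_u$ and using (\ref{alne}) together with $|\sigma'|\le L$, I have $Z_{r,t}\ge e^{-L^2T/2}e^{M_{r,t}}$. Introducing the $\omega$-dependent probability measure $d\mu(r)=\sigma^2(r,X_{\varepsilon,r})dr/A_t$ on $[0,t]$ and applying Jensen's inequality (convexity of $\exp$) to $r\mapsto M_{r,t}$ yields
$$\int_0^t\sigma^2(r,X_{\varepsilon,r})e^{M_{r,t}}dr=A_t\int_0^t e^{M_{r,t}}d\mu\;\ge\;A_t\exp\!\left(\int_0^t M_{r,t}\,d\mu\right),$$
whence $\Theta_{\tilde X_{\varepsilon,t}}\ge c_0 A_t\exp(\int_0^t M_{r,t}\,d\mu)$ with $c_0=e^{-2LT-L^2T/2}$. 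Raising to the $-p$ and applying H\"older with conjugate exponents $r_1=p_0/p$ and $r_2=p_0/(p_0-p)$ then gives
$$E|\Theta_{\tilde X_{\varepsilon,t}}|^{-p}\;\le\;c_0^{-p}\bigl(E[A_t^{-p_0}]\bigr)^{p/p_0}\bigl(E\exp\bigl(-pr_2\textstyle\int_0^t M_{r,t}\,d\mu\bigr)\bigr)^{(p_0-p)/p_0}.$$
To bound the exponential factor uniformly in $\varepsilon$ and $t$, I use $|\int_0^t M_{r,t}\,d\mu|\le\sup_{r\in[0,t]}|M_{r,t}|\le 2\sup_{s\in[0,T]}|\tilde M_s|$, where $\tilde M_s:=\varepsilon\int_0^s\sigma'(u,X_{\varepsilon,u})dB_u$ has quadratic variation at most $\varepsilon^2L^2T\le L^2T$ for every $\varepsilon\in(0,1)$; the Dubins--Schwarz time change and the Gaussian tail of $\sup_{s\le L^2T}|W_s|$ then bound $E\exp(\lambda\sup_s|\tilde M_s|)$ by a constant depending only on $\lambda,L,T$.

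The principal obstacle will be precisely ensuring the exponent $p/p_0$ rather than $2p/p_0$ on the right-hand side. A naive Cauchy--Schwarz $\int\sigma^2\,dr\le(\int\sigma^2 Z\,dr)^{1/2}(\int\sigma^2 Z^{-1}\,dr)^{1/2}$ only yields $\int\sigma^2 Z\,dr\ge A_t^2/\int\sigma^2 Z^{-1}\,dr$, which costs a factor $A_t^{-2p}$ inside the H\"older split and would force $p_0>32$ to reach the moment $E|\Theta_{\tilde X_{\varepsilon,t}}|^{-16}$ appearing in $C_F$ of Theorem \ref{lm2} when applied in Theorem \ref{klf9}. The Jensen step above, with the random measure $d\mu=\sigma^2dr/A_t$, is what preserves the correct power $A_t^{-p}$ and allows H\"older with $r_1=p_0/p$ to produce exactly $(E[A_t^{-p_0}])^{p/p_0}$, matching the assumption $p_0>16$.
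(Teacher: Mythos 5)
Your proof is correct and follows essentially the same route as the paper: the same explicit representation of $D_r\tilde X_{\varepsilon,t}$ via (\ref{jknm}), the same lower bounds on the drift factors and on $E[\,\cdot\,|\mathcal{F}_r]$ using $E[Z_{r,t}|\mathcal{F}_r]=1$, reduction to $\Theta_{\tilde X_{\varepsilon,t}}\ge c_0 A_t\cdot(\text{a factor with all exponential moments})$, Dubins--Schwarz to control those moments, and H\"older with exponent $p_0/p$. The only difference is cosmetic: where you apply Jensen's inequality with the random probability measure $\sigma^2(r,X_{\varepsilon,r})\,dr/A_t$, the paper simply bounds $e^{M_{r,t}}\ge e^{-2\max_{0\le s\le T}M_s}$ pointwise and pulls this $r$-independent factor out of the integral --- both devices preserve the first power of $A_t$ and lead to the same exponential-moment estimate.
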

\begin{proof}
Since $D_r\tilde{X}_{\varepsilon,t}=\frac{D_rX_{\varepsilon,t} }{\varepsilon},$ it follows from (\ref{jknm}) that
\begin{align*}
\Theta_{\tilde{X}_{\varepsilon,t}}&=\frac{1}{\varepsilon^2}\int_0^tD_rX_{\varepsilon,t}E[D_rX_{\varepsilon,t}|\mathcal{F}_r]dr\\
&=\int_0^t\sigma^2(r,X_{\varepsilon,r})e^{\int_r^tb'(u,X_{\varepsilon,u})du}Z_{t,r}E\left[e^{\int_r^tb'(u,X_{\varepsilon,u})du}Z_{t,r}\big|\mathcal{F}_r\right]dr,
\end{align*}
where $Z_{t,r}$ is defined by (\ref{alne}). Since $b',\sigma'$ are bounded by $L,$ we deduce $e^{\int_r^tb'(u,X_{\varepsilon,u})du}Z_{t,r}\geq e^{-\frac{3LT}{2}}e^{\varepsilon\int_r^t\sigma'(u,X_{\varepsilon,u})dB_u}$ and $E\left[e^{\int_r^tb'(u,X_{\varepsilon,u})du}Z_{t,r}\big|\mathcal{F}_r\right]\geq e^{-LT}E\left[Z_{t,r}\big|\mathcal{F}_r\right]=e^{-LT}.$ Then, we obtain
\begin{align*}
\Theta_{\tilde{X}_{\varepsilon,t}}&\geq e^{-\frac{5LT}{2}}\int_0^t\sigma^2(r,X_{\varepsilon,r})e^{\varepsilon\int_r^t\sigma'(u,X_{\varepsilon,u})dB_u }dr\\
&\geq e^{-\frac{5LT}{2}}e^{M_t-\max\limits_{0\le t\le T}M_t}\int_0^t\sigma^2(r,X_{\varepsilon,r})dr\\
&\geq e^{-\frac{5LT}{2}}e^{-2\max\limits_{0\le t\le T}M_t}\int_0^t\sigma^2(r,X_{\varepsilon,r})dr,
\end{align*}
where $M_t:=\varepsilon\int_0^t\sigma'(u,X_{\varepsilon,u})dB_u,0\le t\le T.$ We observe that $M_t$ is a martingale with the bounded quadratic variation. Indeed, $\langle M \rangle _t=\varepsilon^2\int_0^t\sigma'^2(s,X_{\varepsilon,s})ds\le L^2T$ for all $\varepsilon\in(0,1)$ and $t\in[0,T].$ By Dubin and Schwarz's theorem, there exists a one dimensional Brownian motion $(m_t)_{t\ge 0}$ such that $M_t=m_{\langle M \rangle _t}$. Then we arrive at the following
\begin{align*}
	|\Theta_{\tilde{X}_{\varepsilon,t}}|\ge e^{-\frac{5LT}{2}}e^{-2\max\limits_{0\le t\le L^2T}m_t}\int_0^t\sigma^2(r,X_{\varepsilon,r})dr\,\,\,\forall\,\varepsilon\in (0,1),t \in (0,T].
\end{align*}
Note that, by  Fernique's theorem, we always have $E\left[e^{4q\max\limits_{0\leq t\leq L^2T}m_t}\right]<\infty$ for all $q>0.$ Hence, for $0<p<p_0$, we use H\"older's inequality to deduce the following
\begin{align*}
	E|\Theta_{\tilde{X}_{\varepsilon,t}}|^{-p}&\leq e^{\frac{5pLT}{2}}E\left[e^{2p\max\limits_{0\le t\le T}m_t}\bigg(\int_0^t\sigma^2(r,X_{\varepsilon,r})dr\bigg)^{-p}\right]\\
&\leq C\left(E\bigg|\int_0^t\sigma^2(r,X_{\varepsilon,r})dr\bigg|^{-p_0}\right)^{\frac{p}{p_0}}\,\,\,\forall\,\varepsilon\in (0,1),t \in (0,T],
\end{align*}
where $C>0$ is a positive constant not depending on $t$ and $\varepsilon.$ The proof of the proposition is complete.
\end{proof}

\begin{proposition}\label{pr3.7}Let $(\Theta_{\tilde{X}_{\varepsilon,t}})_{t\in[0,T]}$ be as in Proposition \ref{momenam1}. Suppose the assumptions $(A_1)$ and $(A_2).$ Then, we have
 \begin{align}\label{Dgamma1}
E\|D\Theta_{\tilde{X}_{\varepsilon,t}}\|^4_{L^2[0, T]}\le C \varepsilon^{4}t^6\,\,\,\forall\,\varepsilon\in (0,1),t \in [0,T],
\end{align}
where $C$ is a positive constant not depending on $t$ and $\varepsilon.$
\end{proposition}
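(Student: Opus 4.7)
The plan is to compute $D_\theta\Theta_{\tilde X_{\varepsilon,t}}$ explicitly by differentiating under the integral, then apply Cauchy--Schwarz and the moment bounds from Proposition \ref{pr5} (rescaled by $\varepsilon$, since $D_r\tilde X_{\varepsilon,t}=D_rX_{\varepsilon,t}/\varepsilon$).

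First I would use the product rule together with the identity $D_\theta E[Y|\mathcal{F}_r]=\mathbf{1}_{\{\theta\le r\}}E[D_\theta Y|\mathcal{F}_r]$ (valid for $Y\in \mathbb{D}^{1,2}$, a consequence of the Clark--Ocone formula) to get, for $\theta\in[0,t]$,
\begin{align*}
D_\theta\Theta_{\tilde X_{\varepsilon,t}}
&=\int_0^t D_\theta D_r\tilde X_{\varepsilon,t}\,E[D_r\tilde X_{\varepsilon,t}|\mathcal{F}_r]\,dr
+\int_\theta^t D_r\tilde X_{\varepsilon,t}\,E[D_\theta D_r\tilde X_{\varepsilon,t}|\mathcal{F}_r]\,dr,
\end{align*}
while $D_\theta\Theta_{\tilde X_{\varepsilon,t}}=0$ for $\theta>t$ since $\tilde X_{\varepsilon,t}$ is $\mathcal{F}_t$-measurable.

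Next I would bound $\|D\Theta_{\tilde X_{\varepsilon,t}}\|_{L^2[0,T]}^2=\int_0^t|D_\theta\Theta_{\tilde X_{\varepsilon,t}}|^2\,d\theta$. Squaring the pointwise identity and applying Cauchy--Schwarz in the $dr$-integrals produces four terms of the schematic form $t\int_0^t\int_0^t(D_\theta D_r\tilde X_{\varepsilon,t})^2(E[D_r\tilde X_{\varepsilon,t}|\mathcal F_r])^2\,dr\,d\theta$ and the analogue with the roles reversed. Squaring once more and using Cauchy--Schwarz on the double integral introduces a factor $t^2$, so after taking expectation I will end up controlling
\[
E\|D\Theta_{\tilde X_{\varepsilon,t}}\|_{L^2[0,T]}^4
\le Ct^4\int_0^t\!\!\int_0^t E\!\left[(D_\theta D_r\tilde X_{\varepsilon,t})^4(E[D_r\tilde X_{\varepsilon,t}|\mathcal F_r])^4\right]dr\,d\theta+(\text{symmetric term}).
\]
Then I apply Cauchy--Schwarz inside the expectation, use the conditional Jensen inequality $E|E[\cdot|\mathcal F_r]|^8\le E|\cdot|^8$, and invoke Proposition \ref{pr5}, which after the $1/\varepsilon$ rescaling gives $\sup_{r,t}E|D_r\tilde X_{\varepsilon,t}|^8\le C$ and $\sup_{\theta,r,t}E|D_\theta D_r\tilde X_{\varepsilon,t}|^8\le C\varepsilon^8$. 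The integrand is then bounded by $C\varepsilon^4$, the double integral contributes $t^2$, and combined with the prefactor $t^4$ this yields the claimed $C\varepsilon^4 t^6$.

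The only subtlety I anticipate is keeping the commutation of $D_\theta$ with the conditional expectation correct (to get the indicator $\mathbf{1}_{\{\theta\le r\}}$) and booking the powers of $\varepsilon$ and $t$; everything else is a routine application of Cauchy--Schwarz and the already-established moment estimates.
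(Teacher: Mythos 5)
Your proposal is correct and follows essentially the same route as the paper: differentiate $\Theta_{\tilde{X}_{\varepsilon,t}}$ under the integral via the product rule, apply H\"older/Cauchy--Schwarz twice to extract the factor $t^4$ and reduce to a double integral of fourth moments, then conclude with Cauchy--Schwarz inside the expectation and the bounds of Proposition \ref{pr5} (the paper works with $D_rX_{\varepsilon,t}$ and carries the prefactor $\varepsilon^{-8}$ explicitly rather than rescaling to $\tilde{X}_{\varepsilon,t}$ first, which is immaterial). Your remark about the indicator $\mathbf{1}_{\{\theta\le r\}}$ is a harmless refinement: the paper omits it, and it plays no role since only upper bounds on absolute values are used.
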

\begin{proof}
The Malliavin derivative of $\Theta_{\tilde{X}_{\varepsilon,t}}$ can be computed as follows
\begin{align}\label{dhg}
	D_{\theta}\Theta_{\tilde{X}_{\varepsilon,t}} =\frac{1}{\varepsilon^2}\int_{0}^t\left(D_{\theta} D_rX_{\varepsilon,t} E[D_rX_{\varepsilon,t}|\mathcal{F}_r] + D_rX_{\varepsilon,t}E[D_{\theta}D_rX_{\varepsilon,t}|\mathcal{F}_r]\right)dr.
\end{align}
Hence, we get
\begin{align*}
	\| D\Theta_{\tilde{X}_{\varepsilon,t}}\|^4_{L^2[0, T]}=\frac{1}{\varepsilon^8}\left( \int_0^t \left(\int_{0}^t\left(D_{\theta} D_rX_{\varepsilon,t} E[D_rX_{\varepsilon,t}|\mathcal{F}_r] + D_rX_{\varepsilon,t}E[D_{\theta}D_rX_{\varepsilon,t}|\mathcal{F}_r]\right)dr\right)^2d\theta\right) ^{2}.
\end{align*}
 Using the H\"{o}lder inequality gives us
 \begin{align*}
	E\| D\Theta_{\tilde{X}_{\varepsilon,t}}&\|^4_{L^2[0, T]}\leq \frac{t}{\varepsilon^8}\int_0^t E\bigg|\int_{0}^t\left(D_{\theta} D_rX_{\varepsilon,t} E[D_rX_{\varepsilon,t}|\mathcal{F}_r] + D_rX_{\varepsilon,t}E[D_{\theta}D_rX_{\varepsilon,t}|\mathcal{F}_r]\right)dr\bigg|^4d\theta
\\
&\leq \frac{t^4}{\varepsilon^8}\int_0^t \int_{0}^tE\big|D_{\theta} D_rX_{\varepsilon,t} E[D_rX_{\varepsilon,t}|\mathcal{F}_r] + D_rX_{\varepsilon,t}E[D_{\theta}D_rX_{\varepsilon,t}|\mathcal{F}_r]\big|^4drd\theta\\
& \le \frac{8t^4}{\varepsilon^8}\int_0^t \int_{0}^t \left(E\left|D_{\theta} D_rX_{\varepsilon,t} E[D_rX_{\varepsilon,t}|\mathcal{F}_r]\right|^4+ E\left|D_rX_{\varepsilon,t}E[D_{\theta}D_rX_{\varepsilon,t}|\mathcal{F}_r]\right|^4\right)drd\theta\\
& \le \frac{16t^4}{\varepsilon^8}\int_0^t \int_{0}^t \sqrt{E|D_{\theta} D_rX_{\varepsilon,t}|^8 E|D_rX_{\varepsilon,t}|^8}drd\theta.
\end{align*}
Hence, recalling the estimates (\ref{ct1}) and (\ref{ct1a}), we get
$$E\| D\Theta_{\tilde{X}_{\varepsilon,t}}\|^4_{L^2[0, T]}\le C \varepsilon^{4}t^6\,\,\,\forall\,\varepsilon\in (0,1),t \in [0,T],$$
where $C>0$ is a finite constant not depending on $t$ and $\varepsilon.$ The proof of the proposition is complete.
\end{proof}

\noindent{\it Proof of Theorem \ref{klf9}.}  Put $\tilde{u}_r = E[D_r \tilde{X}_{\varepsilon,t} |\mathcal{F}_r]$ for $0\leq r\leq t\leq T.$ Then, we have
\begin{align*}
	||\tilde{u}||^8 _{L^2[0,T]}= \left(\int_0^t |E[D_r\tilde{X}_{\varepsilon,t}|\mathcal{F}_r]|^2dr\right)^4= \frac{1}{\varepsilon ^8}\left(\int_0^t |E[D_rX_{\varepsilon,t}|\mathcal{F}_r]|^2dr\right)^4.
\end{align*}
Using H\"{o}lder's inequality and the estimate (\ref{ct1}) we have
\begin{align}
E||\tilde{u}||^{8} _{L^2[0,T]}&\le \frac{t^3}{\varepsilon ^8}\int_0^tE\big |E[D_rX_{\varepsilon,t}|\mathcal{F}_r]\big|^8dr\notag
\\
&\le \frac{t^3}{\varepsilon ^8}\int_0^t E|D_rX_{\varepsilon,t}|^8dr\notag
\\
&\le Ct^4\,\,\,\forall\,\varepsilon\in (0,1),t \in [0,T].\label{ct7}
\end{align}
By applying Theorem \ref{lm2} to $F=\tilde{X}_{\varepsilon,t}$ and $N=N_t$ we get
\begin{align*}
I(\tilde{X}_{\varepsilon,t}\|N_t)\leq c\left(\frac{1}{\beta_t^4}(E[\tilde{X}_{\varepsilon,t}])^2+A_{\tilde{X}_{\varepsilon,t}}|{\rm Var}(\tilde{X}_{\varepsilon,t})-\beta_t^2|^2+C_{\tilde{X}_{\varepsilon,t}}\left(E\|D\Theta_{\tilde{X}_{\varepsilon,t}}\|^4\right)^{1/2}\right),
\end{align*}
where $c$ is an absolute constant and
\begin{align*}
A_{\tilde{X}_{\varepsilon,t}}:= \frac{1}{\beta_t^4}\left(E||\tilde{u}||^{8} _{L^2[0,T]}E|\Theta_{\tilde{X}_{\varepsilon,t}}|^{-8}\right)^{1/4},
C_{\tilde{X}_{\varepsilon,t}}:=A_{\tilde{X}_{\varepsilon,t}}+\left(E\|\tilde{u}\|_{L^2[0,T]}^8E|\Theta_{\tilde{X}_{\varepsilon,t}}|^{-16}\right)^{1/4}.
\end{align*}
Using the estimates (\ref{mmam1}) and (\ref{ct7}) we obtain
\begin{align*}
&A_{\tilde{X}_{\varepsilon,t}}\le  \frac{Ct}{\beta_t^4}\left(E\bigg|\int_0^t\sigma^2(r,X_{\varepsilon,r})dr\bigg|^{-p_0}\right)^{\frac{2}{p_0}},\\
&C_{\tilde{X}_{\varepsilon,t}}\le \frac{Ct}{\beta_t^4}\left(E\bigg|\int_0^t\sigma^2(r,X_{\varepsilon,r})dr\bigg|^{-p_0}\right)^{\frac{2}{p_0}}
+Ct\left(E\bigg|\int_0^t\sigma^2(r,X_{\varepsilon,r})dr\bigg|^{-p_0}\right)^{\frac{4}{p_0}}.
\end{align*}
Furthermore, thanks to Propositions \ref{pro1} and \ref{pr3.7}, we have
\begin{align*}
&|E[\tilde{X}_{\varepsilon,t}]|^2\leq Ct^4\varepsilon^2,\\
&|{\rm Var}(\tilde{X}_{\varepsilon,t})-\beta_t^2|^2\leq Ct^3\varepsilon^2,\\
&\left(E\|D\Theta_{\tilde{X}_{\varepsilon,t}}\|^4\right)^{1/2}\leq C t^3\varepsilon^2.
\end{align*}
Combining the above computations gives us
\begin{multline*}
I(\tilde{X}_{\varepsilon,t}\|N_t)\leq C\bigg(\frac{t^4}{\beta_t^4}+\frac{t^4}{\beta_t^4}\bigg(E\bigg|\int_0^t\sigma^2(r,X_{\varepsilon,r})dr\bigg|^{-p_0}\bigg)^{\frac{2}{p_0}}\\+
t^4\bigg(E\bigg|\int_0^t\sigma^2(r,X_{\varepsilon,r})dr\bigg|^{-p_0}\bigg)^{\frac{4}{p_0}}\bigg)\varepsilon^2.\hspace{2cm}
\end{multline*}
So the proof of Theorem \ref{klf9} is complete.\hfill$\square$
\subsection{Proof of Theorem \ref{k7zlf}}
Our idea is to use the following relation between the Fisher information and total variation distances, see \cite{Pinsker1964,Stam1959}:
\begin{equation}\label{mm}
\sqrt{I(\tilde{X}_{\varepsilon,t}\|N_t)}\geq d_{TV}(\tilde{X}_{\varepsilon,t}\|N_t):=\frac{1}{2}\sup\limits_{g}|E[g(\tilde{X}_{\varepsilon,t})]-E[g(N_t)]|,
\end{equation}
where the supremum is running over all measurable functions $g$ bounded by $1.$ Thus our main task is to find a lower bound for $\lim\limits_{\varepsilon\to 0}d_{TV}(\tilde{X}_{\varepsilon,t}\|N_t).$
\begin{proposition}Suppose the assumptions $(A_1)$-$(A_2).$ Then, for each $p \ge 2,$ we have
\begin{equation}\label{ccv}
\sup\limits_{t\in[0,T]}E|\tilde{X}_{\varepsilon,t}-U_t|^p\leq C\varepsilon^p \,\,\,\forall\,\varepsilon\in (0,1),
\end{equation}
\begin{equation}\label{cxcv}
\sup\limits_{t\in[0,T]}\int_0^t E|D_r\tilde{X}_{\varepsilon,t}-D_rU_t|^2dr\leq C\varepsilon^2\,\,\,\forall\,\varepsilon\in (0,1),
\end{equation}
where $C$ is a positive constant not depending on $\varepsilon.$
\end{proposition}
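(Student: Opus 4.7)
The plan is to subtract the two integral equations and apply Gronwall's lemma in each case. For the first bound (\ref{ccv}), I would start from the Taylor expansion (\ref{ydlq}) together with the linear equation (\ref{ydlq1}) for $U_t$. Setting $\Delta_t := \tilde{X}_{\varepsilon,t} - U_t$, one obtains
$$\Delta_t = \int_0^t b'(s,x_s)\Delta_s\,ds + R_t^{(1)} + R_t^{(2)},$$
with $R_t^{(1)} = \frac{1}{2\varepsilon}\int_0^t b''(s,x_s+\theta_s(X_{\varepsilon,s}-x_s))(X_{\varepsilon,s}-x_s)^2\,ds$ and $R_t^{(2)} = \int_0^t(\sigma(s,X_{\varepsilon,s})-\sigma(s,x_s))\,dB_s$. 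Using the boundedness of $b''$, H\"older's inequality, and (\ref{oo1}) applied with exponent $2p$, the key observation is that the quadratic factor $(X_{\varepsilon,s}-x_s)^2$ absorbs the $1/\varepsilon$ prefactor, giving $E|R_t^{(1)}|^p \le C\varepsilon^p$; BDG combined with the Lipschitz property of $\sigma$ and (\ref{oo1}) yields $E|R_t^{(2)}|^p \le C\varepsilon^p$. Since $b'$ is bounded, Gronwall's lemma applied to $t \mapsto E|\Delta_t|^p$ then produces (\ref{ccv}).

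For (\ref{cxcv}), I would carry out the same strategy at the level of Malliavin derivatives. Dividing the equation (\ref{dhc1x}) by $\varepsilon$ gives
$$D_r\tilde{X}_{\varepsilon,t} = \sigma(r,X_{\varepsilon,r}) + \int_r^t b'(s,X_{\varepsilon,s})D_r\tilde{X}_{\varepsilon,s}\,ds + \varepsilon\int_r^t\sigma'(s,X_{\varepsilon,s})D_r\tilde{X}_{\varepsilon,s}\,dB_s,$$
while $D_r U_t = \sigma(r,x_r) + \int_r^t b'(s,x_s)D_r U_s\,ds$, a deterministic quantity uniformly bounded on $\{0\le r\le t\le T\}$. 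Setting $\Lambda_r(t) := D_r\tilde{X}_{\varepsilon,t} - D_r U_t$ and subtracting yields
$$\Lambda_r(t) = [\sigma(r,X_{\varepsilon,r})-\sigma(r,x_r)] + \int_r^t b'(s,X_{\varepsilon,s})\Lambda_r(s)\,ds + \int_r^t [b'(s,X_{\varepsilon,s})-b'(s,x_s)] D_r U_s\,ds + \varepsilon\int_r^t\sigma'(s,X_{\varepsilon,s})D_r\tilde{X}_{\varepsilon,s}\,dB_s.$$
The first and third inhomogeneous terms are each $O(\varepsilon^2)$ in $L^2$ by the Lipschitz continuity of $\sigma$ and $b'$ (coming from the boundedness of $\sigma'$ and $b''$) combined with (\ref{oo1}); the stochastic integral is $O(\varepsilon^2)$ by the It\^o isometry and (\ref{ct1}), which after rescaling gives $E|D_r\tilde{X}_{\varepsilon,s}|^2 \le C$. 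Gronwall's lemma in $t$ for fixed $r$ then produces $E|\Lambda_r(t)|^2 \le C\varepsilon^2$ uniformly in $0\le r\le t\le T$, and integrating over $r\in[0,t]$ yields (\ref{cxcv}).

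Both estimates are therefore reduced to standard Gronwall arguments. The only genuine point to verify is the cancellation of the $1/\varepsilon$ prefactor in $R_t^{(1)}$ against the quadratic Taylor remainder $(X_{\varepsilon,s}-x_s)^2$, which is itself of size $\varepsilon$; all other terms are handled by combining Propositions \ref{pr1} and \ref{pr5} and Lemma \ref{pxro1} with standard stochastic-calculus inequalities, and no new non-degeneracy assumption is needed.
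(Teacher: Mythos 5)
Your argument for (\ref{ccv}) is exactly the paper's: the same decomposition of $\tilde{X}_{\varepsilon,t}-U_t$ obtained from (\ref{ydlq}) and (\ref{ydlq1}), the same absorption of the $1/\varepsilon$ prefactor by $E|X_{\varepsilon,s}-x_s|^{2p}\le Cs^p\varepsilon^{2p}$ from (\ref{oo1}), and Gronwall. For (\ref{cxcv}), however, you take a genuinely different route, and it is correct. The paper works from the explicit exponential representation (\ref{jknm}), writing $D_r\tilde{X}_{\varepsilon,t}=\sigma(r,X_{\varepsilon,r})e^{\int_r^tb'(u,X_{\varepsilon,u})du}Z_{r,t}$ and $D_rU_t=\sigma(r,x_r)e^{\int_r^tb'(u,x_u)du}$, splits the difference into three products, and controls the last one via $\sup_{r\le t}E|Z_{r,t}-1|^2\le C\varepsilon^2$ deduced from the SDE (\ref{axlne}) for the exponential martingale (\ref{alne}); this recycles the machinery already built for Proposition \ref{pro1}. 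You instead divide (\ref{dhc1x}) by $\varepsilon$, subtract the (deterministic) linear ODE for $D_rU_t$, and run Gronwall in $t$ for each fixed $r$; the inhomogeneous terms are controlled by (\ref{oo1}) and the Lipschitz bounds on $\sigma$ and $b'$, and the stochastic integral by It\^o isometry plus the rescaled bound $E|D_r\tilde{X}_{\varepsilon,s}|^2\le C$ from (\ref{ct1}). Your version is more elementary in that it never needs the closed-form solution or the process $Z_{r,t}$, at the cost of a second Gronwall argument; the paper's version buys uniformity of all intermediate estimates directly from the already-established moment bounds on $Z_{r,t}$. Both yield a constant independent of $r$, $t$ and $\varepsilon$, so integrating over $r\in[0,t]$ closes the proof in either case.
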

\begin{proof}Recalling (\ref{ydlq1}) and (\ref{ydlq}) we have
\begin{align}
\tilde{X}_{\varepsilon,t}-U_t=\int_0^tb'(s,x_s)(\tilde{X}_{\varepsilon,s}-U_s)ds
&+\frac{1}{2\varepsilon}\int_0^tb''(s,x_s+\theta_s(X_{\varepsilon,s}-x_s))(X_{\varepsilon,s}-x_s)^2ds\notag\\
&+\int_0^t(\sigma(s,X_{\varepsilon,s})-\sigma(s,x_s))dB_s,\,\,t\in [0,T].\label{vvn}
\end{align}
Hence, under the assumption $(A_2),$ we can use H\"{o}lder and Burkholder-Davis-Gundy inequalities to get
\begin{align*}
E|\tilde{X}_{\varepsilon,t}-U_t|^p\leq C\int_0^tE|\tilde{X}_{\varepsilon,s}-U_s|^pds
&+\frac{C}{\varepsilon^p}\int_0^tE|X_{\varepsilon,s}-x_s|^{2p}ds\\
&+C\int_0^tE|X_{\varepsilon,s}-x_s|^{p}ds,\,\,t\in [0,T]
\end{align*}
for all $\varepsilon\in (0,1)$ and for some $C>0.$ Consequently, it follows from the estimate (\ref{oo1}) that
$$E|\tilde{X}_{\varepsilon,t}-U_t|^p\leq C\varepsilon^p+C\int_0^tE|\tilde{X}_{\varepsilon,s}-U_s|^pds,\,\,t\in [0,T],$$
where $C$ is a positive constant not depending on $t$ and $\varepsilon.$ So we obtain the estimate (\ref{ccv}) by using Gronwall's lemma. Indeed, we have
$$E|\tilde{X}_{\varepsilon,t}-U_t|^p\leq C\varepsilon^p e^{Ct}\leq C\varepsilon^p \,\,\,\forall\,\varepsilon\in (0,1),t \in [0,T].$$
Let us now prove (\ref{cxcv}). We have, for $0\leq r \leq T,$
\begin{align*}
D_r\tilde{X}_{\varepsilon,t}-D_rU_t&=\frac{1}{\varepsilon}D_rX_{\varepsilon,t}-D_rU_t\\
&=\sigma(r,X_{\varepsilon,r})e^{\int_r^tb'(u,X_{\varepsilon,u})du}Z_{r,t}-\sigma(r,x_r)e^{\int_r^tb'(u,x_r)du},
\end{align*}
where $Z_{r,t}$ is defined by (\ref{alne}). We deduce
\begin{align*}
|D_r\tilde{X}_{\varepsilon,t}-D_rU_t|
&\leq |\sigma(r,X_{\varepsilon,r})-\sigma(r,x_r)|e^{\int_r^tb'(u,X_{\varepsilon,u})du}Z_{r,t}\\
&+\sigma(r,x_r)\left|e^{\int_r^tb'(u,X_{\varepsilon,u})du}-e^{\int_r^tb'(u,x_u)du}\right|Z_{r,t}+\sigma(r,x_r)e^{\int_r^tb'(u,x_u)du}|Z_{r,t}-1|
\end{align*}
and
\begin{align*}
\int_0^t E|D_r\tilde{X}_{\varepsilon,t}-D_rU_t|^2dr&
\leq 3\int_0^tE\left[|\sigma(r,X_{\varepsilon,r})-\sigma(r,x_r)|^2e^{2\int_r^tb'(u,X_{\varepsilon,u})du}Z_{r,t}^2\right]dr\\
&+3\int_0^t \sigma^2(r,x_r)E\left[\left|e^{\int_r^tb'(u,X_{\varepsilon,u})du}-e^{\int_r^tb'(u,x_u)du}\right|^2Z_{r,t}^2\right]dr\\&+3\int_0^t \sigma^2(r,x_r)e^{2\int_r^tb'(u,x_u)du}E|Z_{r,t}-1|^2dr.
\end{align*}
Hence, using the same arguments as in the proof (\ref{nvn}) and  (\ref{nvn1}), we get
\begin{align*}
\int_0^t E|D_r\tilde{X}_{\varepsilon,t}-D_rU_t|^2dr&\leq C\varepsilon^2+3\int_0^t \sigma^2(r,x_r)e^{2\int_r^tb'(u,x_u)du}E|Z_{r,t}-1|^2dr.
\end{align*}
On the other hand, it follows from the equation (\ref{axlne}) that
$$\sup\limits_{0\leq r\leq t\leq T}E|Z_{r,t}-1|^2\leq C\varepsilon^2\,\,\,\forall\,\varepsilon\in (0,1).$$
So we obtain
$$\int_0^t E|D_r\tilde{X}_{\varepsilon,t}-D_rU_t|^2dr\leq C\varepsilon^2\,\,\,\forall\,\varepsilon\in (0,1),t \in [0,T],$$
where $C$ is a positive constant not depending on $t$ and $\varepsilon.$ This finishes the proof of the proposition.
\end{proof}
\begin{proposition}Suppose the assumptions $(A_1)$-$(A_2).$ Then, for each $p\geq 2,$ we have
\begin{equation}\label{avn03}
\lim\limits_{\varepsilon\to0}E\bigg|\frac{\tilde{X}_{\varepsilon,t}-U_t}{\varepsilon}-V_t\bigg|^p=0,\,\,t\in[0,T].
\end{equation}
\end{proposition}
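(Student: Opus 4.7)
The plan is to derive an integral equation satisfied by
\[
W_{\varepsilon,t} := \frac{\tilde X_{\varepsilon,t}-U_t}{\varepsilon}-V_t,
\]
then apply H\"older, Burkholder-Davis-Gundy and Gronwall to reduce the problem to the convergence of explicit remainder terms. Starting from (\ref{ydlq}), dividing by $\varepsilon$, using the identity $(X_{\varepsilon,s}-x_s)/\varepsilon=\tilde X_{\varepsilon,s}$, and applying the second-order Taylor expansion
\[
\sigma(s,X_{\varepsilon,s})-\sigma(s,x_s)=\varepsilon\sigma'(s,x_s)\tilde X_{\varepsilon,s}+\tfrac{\varepsilon^2}{2}\sigma''(s,x_s+\phi_s(X_{\varepsilon,s}-x_s))\tilde X_{\varepsilon,s}^2
\]
(with $\phi_s\in[0,1]$ from Taylor's theorem), and finally subtracting (\ref{ydlq1})--(\ref{ydlq2}), yields
\[
W_{\varepsilon,t}=\int_0^t b'(s,x_s)W_{\varepsilon,s}\,ds+R^{(1)}_{\varepsilon,t}+R^{(2)}_{\varepsilon,t}+R^{(3)}_{\varepsilon,t},
\]
where
\begin{align*}
R^{(1)}_{\varepsilon,t}&=\tfrac12\int_0^t\bigl[b''(s,x_s+\theta_s(X_{\varepsilon,s}-x_s))\tilde X_{\varepsilon,s}^2-b''(s,x_s)U_s^2\bigr]ds,\\
R^{(2)}_{\varepsilon,t}&=\int_0^t\sigma'(s,x_s)(\tilde X_{\varepsilon,s}-U_s)\,dB_s,\\
R^{(3)}_{\varepsilon,t}&=\tfrac{\varepsilon}{2}\int_0^t\sigma''(s,x_s+\phi_s(X_{\varepsilon,s}-x_s))\tilde X_{\varepsilon,s}^2\,dB_s.
\end{align*}

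H\"older applied to the Lebesgue integrals and BDG applied to the It\^o integrals, followed by Gronwall's inequality, reduce the claim to showing $E|R^{(i)}_{\varepsilon,t}|^p\to 0$ for $i=1,2,3$ as $\varepsilon\to 0$. The term $R^{(2)}_{\varepsilon,t}$ is $O(\varepsilon^p)$ directly from (\ref{ccv}) (via $\sup_{s\le T}E|\tilde X_{\varepsilon,s}-U_s|^p\le C\varepsilon^p$), and $R^{(3)}_{\varepsilon,t}$ is $O(\varepsilon^p)$ by the explicit $\varepsilon$-factor, the boundedness of $\sigma''$ from $(A_2)$, and uniformly bounded moments of $\tilde X_{\varepsilon,s}$ (which follow from Proposition \ref{pro1} and Lemma \ref{pxro1}).

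The genuine obstacle is $R^{(1)}_{\varepsilon,t}$, which I would split as
\begin{align*}
&b''(s,x_s+\theta_s(X_{\varepsilon,s}-x_s))\tilde X_{\varepsilon,s}^2-b''(s,x_s)U_s^2\\
&\quad=\bigl[b''(s,x_s+\theta_s(X_{\varepsilon,s}-x_s))-b''(s,x_s)\bigr]\tilde X_{\varepsilon,s}^2+b''(s,x_s)(\tilde X_{\varepsilon,s}-U_s)(\tilde X_{\varepsilon,s}+U_s).
\end{align*}
The second piece is handled by Cauchy-Schwarz, the bound $\|b''\|_\infty\le L$ and (\ref{ccv}), and hence contributes $O(\varepsilon^p)$. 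For the first piece, continuity and boundedness of $b''$ (from $(A_2)$) together with the $L^p$-convergence $X_{\varepsilon,s}\to x_s$ supplied by (\ref{oo1}) imply that $b''(s,x_s+\theta_s(X_{\varepsilon,s}-x_s))-b''(s,x_s)\to 0$ in probability and, since it is uniformly bounded, in any $L^q$ by bounded convergence. Combined via H\"older with the uniform-in-$\varepsilon$ $L^{2q'}$-bound on $\tilde X_{\varepsilon,s}$, this yields convergence to zero of the $p$-th moment for each fixed $s$, and since the integrand in $s$ is dominated by a constant, dominated convergence in $s$ gives $E|R^{(1)}_{\varepsilon,t}|^p\to 0$. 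Inserting all three estimates into Gronwall closes the argument.
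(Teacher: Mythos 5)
Your proof is correct and follows essentially the same strategy as the paper: derive a closed integral equation for $W_{\varepsilon,t}=(\tilde X_{\varepsilon,t}-U_t)/\varepsilon-V_t$ from (\ref{ydlq}), (\ref{ydlq1}) and (\ref{ydlq2}), apply H\"older/Burkholder-Davis-Gundy and Gronwall, and show the remainders vanish; your treatment of the $b''$ remainder (splitting into a difference-of-squares piece controlled via Cauchy--Schwarz and (\ref{ccv}), and a piece handled by boundedness of $b''$ plus dominated convergence) is exactly the paper's. The one place you deviate is the diffusion term: the paper expands $\sigma(s,X_{\varepsilon,s})-\sigma(s,x_s)$ only to first (mean-value) order, obtaining the single remainder $\int_0^t\big(\sigma'(s,x_s+\eta_s(X_{\varepsilon,s}-x_s))\tilde X_{\varepsilon,s}-\sigma'(s,x_s)U_s\big)dB_s$, which it then kills by the same split-plus-dominated-convergence device used for the $b''$ term; you expand to second order, producing your $R^{(2)}_{\varepsilon,t}$ and $R^{(3)}_{\varepsilon,t}$, both of which come with the explicit rate $O(\varepsilon^p)$ from (\ref{ccv}) and from the visible factor of $\varepsilon$ together with the uniform moment bounds on $\tilde X_{\varepsilon,s}$ implied by (\ref{oo1}). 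Your version is marginally sharper on that term (it is quantitative where the paper's is not), though the final statement is unchanged since the $b''$ remainder still rests on dominated convergence and carries no rate; and since $(A_2)$ already grants twice continuous differentiability of $\sigma$, your second-order expansion requires no extra hypotheses.
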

\begin{proof} We rewrite (\ref{vvn}) as follows
\begin{align*}
\tilde{X}_{\varepsilon,t}-U_t=\int_0^tb'(s,x_s)(\tilde{X}_{\varepsilon,s}-U_s)ds
&+\frac{1}{2\varepsilon}\int_0^tb''(s,x_s+\theta_s(X_{\varepsilon,s}-x_s))(X_{\varepsilon,s}-x_s)^2ds\notag\\
&+\int_0^t\sigma'(s,x_s+\theta_s(X_{\varepsilon,s}-x_s))(X_{\varepsilon,s}-x_s)dB_s,\,\,t\in [0,T].
\end{align*}
where, for each $0\leq s\leq t,$ $\theta_s,\eta_s$ are random variables lying between $0$ and $1.$ Hence, by the definition (\ref{ydlq2}) of $V_t,$ we obtain
\begin{align*}
\frac{\tilde{X}_{\varepsilon,t}-U_t}{\varepsilon}-V_t&=\int_0^tb'(s,x_s)\left(\frac{\tilde{X}_{\varepsilon,s}-U_s}{\varepsilon}-V_s\right)ds\\
&+\frac{1}{2}\int_0^t\big(b''(s,x_s+\theta_s(X_{\varepsilon,s}-x_s))\tilde{X}_{\varepsilon,s}^2-b''(s,x_s)U_s^2\big)ds\\
&+\int_0^t\big(\sigma'(s,x_s+\eta_s(X_{\varepsilon,s}-x_s))\tilde{X}_{\varepsilon,s}-\sigma'(s,x_s)U_s\big)dB_s,\,\,t\in [0,T].
\end{align*}
As a consequence, we deduce
$$E\bigg|\frac{\tilde{X}_{\varepsilon,t}-U_t}{\varepsilon}-V_t\bigg|^p\leq C\int_0^tE\bigg|\frac{\tilde{X}_{\varepsilon,s}-U_s}{\varepsilon}-V_s\bigg|^pds+CK_{\varepsilon},\,\,t\in [0,T]$$
 for some $C>0$ and for all $\varepsilon\in (0,1),$ where $K_{\varepsilon}$ is given by
\begin{align*}
K_{\varepsilon}:=\int_0^TE\big|b''(s,x_s&+\theta_s(X_{\varepsilon,s}-x_s))\tilde{X}_{\varepsilon,s}^2-b''(s,x_s)U_s^2\big|^pds\\
&+\int_0^TE\big|\sigma'(s,x_s+\eta_s(X_{\varepsilon,s}-x_s))\tilde{X}_{\varepsilon,s}-\sigma'(s,x_s)U_s\big|^pds.
\end{align*}
An application of Gronwall's lemma gives us
\begin{equation}\label{avn03z}
E\bigg|\frac{\tilde{X}_{\varepsilon,t}-U_t}{\varepsilon}-V_t\bigg|^p\leq CK_{\varepsilon}e^{Ct}\leq CK_{\varepsilon}\,\,\,\forall\,\varepsilon\in (0,1),t \in [0,T].
\end{equation}
We observe that
\begin{align*}
\int_0^T&E\big|b''(s,x_s+\theta_s(X_{\varepsilon,s}-x_s))\tilde{X}_{\varepsilon,s}^2-b''(s,x_s)U_s^2\big|^pds\\
&\leq 2^{p-1}\int_0^TE\big|b''(s,x_s+\theta_s(X_{\varepsilon,s}-x_s))(\tilde{X}_{\varepsilon,s}^2-U_s^2)\big|^pds\\
&+2^{p-1}\int_0^TE\big|(b''(s,x_s+\theta_s(X_{\varepsilon,s}-x_s))-b''(s,x_s))U_s^2\big|^pds\\
&\leq 2^{p-1}L^p\int_0^T\sqrt{E|\tilde{X}_{\varepsilon,s}-U_s|^{2p}E|\tilde{X}_{\varepsilon,s}+U_s|^{2p}}ds\\
&+2^{p-1}\int_0^T\sqrt{E|b''(s,x_s+\theta_s(X_{\varepsilon,s}-x_s))-b''(s,x_s)|^{2p}E|U_s|^{4p}}ds.
\end{align*}
Furthermore, $E|U_s|^{q}<\infty$ for all $q>1.$ Hence, by the estimate (\ref{ccv}) and the dominated convergence theorem, we get
$$\int_0^TE\big|b''(s,x_s+\theta_s(X_{\varepsilon,s}-x_s))\tilde{X}_{\varepsilon,s}^2-b''(s,x_s)U_s^2\big|^pds\to 0\,\,\text{as}\,\,\varepsilon\to0.$$
Similarly, we also have
$$\int_0^TE\big|\sigma'(s,x_s+\eta_s(X_{\varepsilon,s}-x_s))\tilde{X}_{\varepsilon,s}-\sigma'(s,x_s)U_s\big|^pds\to 0\,\,\text{as}\,\,\varepsilon\to0.$$
Those imply that $K_{\varepsilon}\to$ as $\varepsilon\to0.$ So the desired relation (\ref{avn03}) follows from (\ref{avn03z}). The proof of the proposition is complete.
\end{proof}

\begin{proposition}Suppose the assumptions $(A_1)$-$(A_2).$ Then, for any continuous and bounded function $g$ and for each $t\in (0,T],$ we have
\begin{equation}\label{mm0}
\lim\limits_{\varepsilon\to0}\frac{E[g(\tilde{X}_{\varepsilon,t})]-E[g(U_t)]}{\varepsilon}=\frac{1}{\beta_t^2}E[g(U_t)\delta(V_tDU_t)].
\end{equation}
\end{proposition}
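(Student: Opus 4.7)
The plan rests on the first-order expansion $\tilde{X}_{\varepsilon,t}=U_t+\varepsilon V_t+\varepsilon r_\varepsilon$ with $r_\varepsilon\to 0$ in every $L^p$, established as (\ref{avn03}), combined with Malliavin integration by parts that exploits the fact that $U_t$ is a Wiener integral of a deterministic kernel. In particular, $D_rU_t=\sigma(r,x_r)e^{\int_r^t b'(u,x_u)du}$ is deterministic and
\[
\|DU_t\|_{L^2[0,T]}^2=\int_0^t\sigma^2(r,x_r)e^{2\int_r^t b'(u,x_u)du}\,dr=\beta_t^2.
\]
I would first prove (\ref{mm0}) for $g\in C_b^1$ with bounded continuous derivative, then recognize the limit as the claimed right-hand side via duality, and finally extend to all continuous bounded $g$ by a mollification argument.

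For $g\in C_b^1$, the mean value theorem gives
\[
\frac{g(\tilde{X}_{\varepsilon,t})-g(U_t)}{\varepsilon}=g'(\xi_\varepsilon)\,W_{\varepsilon,t},\qquad W_{\varepsilon,t}:=\frac{\tilde{X}_{\varepsilon,t}-U_t}{\varepsilon},
\]
for some random $\xi_\varepsilon$ lying between $U_t$ and $\tilde{X}_{\varepsilon,t}$. By (\ref{ccv}), $\xi_\varepsilon\to U_t$ in $L^p$, so by continuity of $g'$ we have $g'(\xi_\varepsilon)\to g'(U_t)$ in probability; the bound $|g'(\xi_\varepsilon)|\le\|g'\|_\infty$ together with the $L^p$-convergence $W_{\varepsilon,t}\to V_t$ from (\ref{avn03}) yields uniform integrability of the product. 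Hence
\[
\lim_{\varepsilon\to 0}\frac{E[g(\tilde{X}_{\varepsilon,t})]-E[g(U_t)]}{\varepsilon}=E[g'(U_t)V_t].
\]

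Since $DU_t$ is deterministic, the chain rule gives $D(g(U_t))=g'(U_t)DU_t$, so
\[
\langle D(g(U_t)),\,V_t\,DU_t\rangle_{L^2[0,T]}=g'(U_t)V_t\,\|DU_t\|_{L^2[0,T]}^2=\beta_t^2\,g'(U_t)V_t.
\]
Applying the duality (\ref{ct3}) to the process $u_r=V_tD_rU_t/\beta_t^2$, which belongs to $\mathrm{Dom}\,\delta$ by the Malliavin smoothness of $V_t$ recorded in the remark following Theorem \ref{k7zlf}, then gives
\[
E[g'(U_t)V_t]=\frac{1}{\beta_t^2}E\bigl[g(U_t)\delta(V_tDU_t)\bigr],
\]
which is precisely the right-hand side of (\ref{mm0}) and finishes the proof when $g\in C_b^1$.

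Extending from $C_b^1$ to arbitrary continuous bounded $g$ is the main technical hurdle. My plan is to approximate $g$ by its mollifications $g_n\in C_b^\infty$ with $\|g_n\|_\infty\le\|g\|_\infty$ and $g_n\to g$ uniformly on compacts. The right-hand side of (\ref{mm0}) then passes to the limit for free by dominated convergence, since $\delta(V_tDU_t)\in L^1$. The left-hand side requires the uniform-in-$\varepsilon$ control
\[
\lim_{n\to\infty}\sup_{\varepsilon\in(0,1)}\frac{|E[(g-g_n)(\tilde{X}_{\varepsilon,t})]-E[(g-g_n)(U_t)]|}{\varepsilon}=0,
\]
which amounts to an $O(\varepsilon)$ bound on the total variation distance between $\tilde{X}_{\varepsilon,t}$ and $U_t$. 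This is the delicate step, and I would close it by invoking the Malliavin density formula (\ref{Fmla2}) to write the density of $\tilde{X}_{\varepsilon,t}$ explicitly, comparing it with the Gaussian density of $U_t$ via the Malliavin moment bounds of Proposition \ref{pr5} and the smoothness of $V_t$, to control the normalized density difference in $L^1$ at rate $\varepsilon$.
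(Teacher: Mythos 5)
Your argument for $g\in C_b^1$ is correct and is a legitimate alternative to the paper's route: the mean value theorem plus the $L^p$ convergences (\ref{ccv}) and (\ref{avn03}) give $E[g'(U_t)V_t]$ as the limit, and the duality step converting this to $\beta_t^{-2}E[g(U_t)\delta(V_tDU_t)]$ is sound since $DU_t$ is deterministic and $V_t\in\mathbb{D}^{1,2}$. Had the proposition only claimed the result for $C^1_b$ test functions, you would be done (and, incidentally, that weaker version would still suffice for the application in Theorem \ref{k7zlf}, since the approximation by smooth $g$ there can be performed after taking the limit in $\varepsilon$).

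The gap is the extension to merely continuous bounded $g$, which is where all the difficulty of the statement lives. The uniform control you require, $\sup_{\varepsilon}\varepsilon^{-1}|E[(g-g_n)(\tilde{X}_{\varepsilon,t})]-E[(g-g_n)(U_t)]|\to 0$, is not something you can ``close'' as sketched. First, an $O(\varepsilon)$ bound on $d_{TV}(\tilde{X}_{\varepsilon,t},U_t)$ with a constant uniform in $\varepsilon$ is itself a substantial theorem --- essentially an $L^1$ comparison of densities at rate $\varepsilon$ --- and the density formula (\ref{Fmla2}) requires non-degeneracy and negative-moment hypotheses (as in (\ref{gy3}) or Proposition \ref{momenam1}) that are \emph{not} among the assumptions $(A_1)$--$(A_2)$ of this proposition. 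Second, even granting such a TV bound, your mollification only gives $g_n\to g$ uniformly on compacts, so $\|g-g_n\|_\infty$ need not tend to zero and the product $\|g-g_n\|_\infty\cdot d_{TV}$ does not close the estimate without a separate, $o(\varepsilon)$-uniform tail argument. The paper sidesteps all of this with the exact Wiener-space identity (formula (3.2) of \cite{DungNT2022})
\begin{equation*}
E[g(\tilde{X}_{\varepsilon,t})]-E[g(U_t)]=E\left[\int_{U_t}^{\tilde{X}_{\varepsilon,t}} g(z)\,dz\;\delta\Bigl(\tfrac{DU_t}{\|DU_t\|^2}\Bigr)\right]-E\left[\tfrac{g(\tilde{X}_{\varepsilon,t})\,\langle D\tilde{X}_{\varepsilon,t}-DU_t,\,DU_t\rangle}{\|DU_t\|^2}\right],
\end{equation*}
which involves $g$ only through an antiderivative and a composition, never through $g'$; the limit is then computed termwise from (\ref{ccv}), (\ref{cxcv}), (\ref{avn03}) and dominated convergence, directly for every continuous bounded $g$. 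Either adopt that identity or restrict your claim to $C_b^1$ and rework the deduction of Theorem \ref{k7zlf} accordingly; as written, the final step of your proposal is an unproven program rather than a proof.
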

\begin{proof}For simplicity, we write $\langle.,. \rangle$ instead of $\langle.,. \rangle_{L^2[0,T]}$ and $\|.\|$ instead of $\|.\|_{L^2[0,T]}.$ Fix $t\in (0,T],$ by using the formula (3.2) in \cite{DungNT2022}, we get
\begin{align*}
E[g(\tilde{X}_{\varepsilon,t})]-E[g(U_t)]&=E\left[\int_{U_t}^{\tilde{X}_{\varepsilon,t}} g(z)dz\delta\bigg(\frac{DU_t}{\|DU_t\|^2}\bigg)\right]-E\left[\frac{g(\tilde{X}_{\varepsilon,t}) \langle D\tilde{X}_{\varepsilon,t}-DU_t, DU_t \rangle}{\|DU_t\|^2}\right].
\end{align*}
We observe that $\|DU_t\|^2=\beta_t^2$ and $\delta\left(\frac{DU_t}{\|DU_t\|^2}\right)=U_t/\beta_t^2.$ So we obtain
\begin{align*}
E[g(\tilde{X}_{\varepsilon,t})]-E[g(U_t)]&=\frac{1}{\beta_t^2}E\left[U_t\int_{U_t}^{\tilde{X}_{\varepsilon,t}} g(z)dz\right]-\frac{1}{\beta_t^2}E\left[g(\tilde{X}_{\varepsilon,t}) \langle D\tilde{X}_{\varepsilon,t}-DU_t, DU_t \rangle\right].
\end{align*}
Then, for $\varepsilon\neq 0,$
\begin{align}
&\frac{E[g(\tilde{X}_{\varepsilon,t})]-E[g(U_t)]}{\varepsilon}-\frac{1}{\beta_t^2}E\left[g(U_t)V_tU_t\right]+\frac{1}{\beta_t^2}E\left[g(U_t)\langle DV_t, DU_t \rangle\right]\notag\\
&=\frac{1}{\beta_t^2}E\left[\bigg(\frac{1}{\varepsilon}\int_{U_t}^{\tilde{X}_{\varepsilon,t}} g(z)dz-g(U_t)V_t\bigg)U_t\right]-\frac{1}{\beta_t^2}E\left[(g(\tilde{X}_{\varepsilon,t}) -g(U_t))\bigg\langle \frac{D\tilde{X}_{\varepsilon,t}-DU_t}{\varepsilon}, DU_t\bigg\rangle\right]\notag\\
&-\frac{1}{\beta_t^2}E\left[g(U_t)\bigg\langle \frac{D\tilde{X}_{\varepsilon,t}-DU_t}{\varepsilon}-DV_t, DU_t\bigg\rangle\right],\,\,0<t\leq T.\label{jaq}
\end{align}
We observe that
\begin{align*}
\frac{1}{\varepsilon}&\int_{U_t}^{\tilde{X}_{\varepsilon,t}} g(z)dz-g(U_t)V_t=\frac{\tilde{X}_{\varepsilon,t}-U_t}{\varepsilon}\int_0^1 g(U_t+z(\tilde{X}_{\varepsilon,t}-U_t))dz-g(U_t)V_t\\
&=\bigg(\frac{\tilde{X}_{\varepsilon,t}-U_t}{\varepsilon}-V_t\bigg)\int_0^1 g(U_t+z(\tilde{X}_{\varepsilon,t}-U_t))dz+V_t\int_0^1 (g(U_t+z(\tilde{X}_{\varepsilon,t}-U_t))-g(U_t))dz,
\end{align*}
and hence,
\begin{align*}
E\bigg|\bigg(\frac{1}{\varepsilon}\int_{U_t}^{\tilde{X}_{\varepsilon,t}} g(z)dz-g(U_t)V_t\bigg)&U_t\bigg|\leq \|g\|_\infty E\left|\bigg(\frac{\tilde{X}_{\varepsilon,t}-U_t}{\varepsilon}-V_t\bigg)U_t\right|\\
&+E\left|V_tU_t\int_0^1 (g(U_t+z(\tilde{X}_{\varepsilon,t}-U_t))-g(U_t))dz\right|.
\end{align*}
Because the random variables $U_t$ and $V_t$ belong to $L^2(\Omega,)$ we have
$$\lim\limits_{\varepsilon\to 0}E\left|\bigg(\frac{\tilde{X}_{\varepsilon,t}-U_t}{\varepsilon}-V_t\bigg)U_t\right|=0\,\,\,\text{by the estimate (\ref{avn03}),}$$
and
$$\lim\limits_{\varepsilon\to 0}E\left|V_tU_t\int_0^1 (g(U_t+z(\tilde{X}_{\varepsilon,t}-U_t))-g(U_t))dz\right|=0$$
by the dominated convergence theorem. So it holds that
\begin{equation}\label{jaq1}\lim\limits_{\varepsilon\to 0}E\bigg[\bigg(\frac{1}{\varepsilon}\int_{U_t}^{\tilde{X}_{\varepsilon,t}} g(z)dz-g(U_t)V_t\bigg)U_t\bigg]=0.
\end{equation}
On the other hand, we have
\begin{align*}
E&\left[(g(\tilde{X}_{\varepsilon,t}) -g(U_t))\bigg\langle \frac{D\tilde{X}_{\varepsilon,t}-DU_t}{\varepsilon}, DU_t\bigg\rangle\right]\leq \frac{1}{\beta_t} E\left[\frac{|g(\tilde{X}_{\varepsilon,t}) -g(U_t)|\| D\tilde{X}_{\varepsilon,t}-DU_t\|}{\varepsilon}\right]\\
&\leq \frac{1}{\beta_t}(E|g(\tilde{X}_{\varepsilon,t}) -g(U_t)|^2)^{\frac{1}{2}}\bigg(\frac{E\| D\tilde{X}_{\varepsilon,t}-DU_t\|^2}{\varepsilon^2}\bigg)^{\frac{1}{2}}.
\end{align*}
Once again, by (\ref{cxcv}) and the dominated convergence theorem, we derive
\begin{equation}\label{jaq2}
\lim\limits_{\varepsilon\to 0}E\left[(g(\tilde{X}_{\varepsilon,t}) -g(U_t))\bigg\langle \frac{D\tilde{X}_{\varepsilon,t}-DU_t}{\varepsilon}, DU_t\bigg\rangle\right]=0.
\end{equation}
In view of Lemma 1.2.3 in \cite{nualartm2}, it follows from (\ref{cxcv}) and (\ref{avn03}) that
\begin{equation}\label{jaq3}
\lim\limits_{\varepsilon\to 0}E\left[g(U_t)\bigg\langle \frac{D\tilde{X}_{\varepsilon,t}-DU_t}{\varepsilon}-DV_t, DU_t\bigg\rangle\right]=0.
\end{equation}
Combining (\ref{jaq})-(\ref{jaq3}) yields
$$\lim\limits_{\varepsilon\to 0}\frac{E[g(\tilde{X}_{\varepsilon,t})]-E[g(U_t)]}{\varepsilon}=\frac{1}{\beta_t^2}E\left[g(U_t)V_tU_t\right]
-\frac{1}{\beta_t^2}E\left[g(U_t)\langle DV_t, DU_t \rangle\right].$$
Then we obtain (\ref{mm0}) by using the duality relationships (\ref{ct2}). This finishes the proof of the proposition.
\end{proof}

\noindent{\it Proof of Theorem \ref{k7zlf}.} We first note that, for each $t\in(0,T],$ $U_t$ is also a normal random variable with mean zero and variance $\beta_t^2.$ This means that $I(\tilde{X}_{\varepsilon,t}\|N_t)=I(\tilde{X}_{\varepsilon,t}\|U_t).$ Hence, it follows from the relations (\ref{mm}) and (\ref{mm0}) that
\begin{align*}
\lim\limits_{\varepsilon\to0}\sqrt{I(\tilde{X}_{\varepsilon,t}\|N_t)}\geq \frac{1}{2\beta_t^2}\left|E\left[g(U_t)\delta(V_tDU_t)\right]\right|
=\frac{1}{2\beta_t^2}\left|E\left[g(U_t)E[\delta(V_tDU_t)|U_t]\right]\right|
\end{align*}
for any continuous function $g$ bounded by $1.$ Then, by the routine approximation argument, we can choose to use $g(x)={\rm sign}\left(E\left[\delta\left(V_tDU_t\right)\big|U_t=x\right]\right)$ and we obtain (\ref{mm1}).

The proof of Theorem \ref{k7zlf} is complete.  \hfill$\square$
\subsection{Proof of Theorem \ref{theorem2}}
\noindent We recall that
$$Y_{\varepsilon,t} = \int_0^t  f(s, X_{\varepsilon,s})ds,\,\,t\in[0,T].$$
By the chain rule for Malliavin derivatives, we have
\begin{align}
	D_rY_{\varepsilon,t}=& \int_r^tf'(s,X_{\varepsilon,s})D_rX_{\varepsilon,s}ds,\label{dhc1}
\\	D_{\theta} D_rY_{\varepsilon,t} =& \int_{\theta \vee r}^t\left[f''(s, X_{\varepsilon,s})D_{\theta}X_{\varepsilon,s}D_rX_{\varepsilon,s} + f'(s, X_{\varepsilon,s})D_{\theta}D_	rX_{\varepsilon,s}\right]ds\label{dhc2}
\end{align}
for all $0\leq r,\theta\leq t\le T,$ where $f'(t,x)=\frac{\partial f(t,x)}{\partial{x}},\mbox{  }f''(t,x)=\frac{\partial ^2f(t,x)}{\partial x^2}.$

For the proof of Theorem \ref{theorem2} we will need the following Propositions \ref{pr7}-\ref{rsprx}.
\begin{proposition}\label{pr7}
Suppose the assumptions $(A_1)$-$(A_3).$ Then, we have
\begin{align}
\sup\limits_{0\le r \le t}E|D_rY_{\varepsilon,t}|^p&\le Ct^p\varepsilon^p\,\,\,\forall\,\varepsilon\in (0,1),t\in[0,T], \label{df}\\
\sup\limits_{0\le r,\theta \le t}E|D_{\theta} D_rY_{\varepsilon,t}|^p&\le Ct^p\varepsilon^{2p}\,\,\,\forall\,\varepsilon\in (0,1),t\in[0,T],\label{ddf}
\end{align}
where $C$ is a positive constant not depending on $t$ and $\varepsilon.$
\end{proposition}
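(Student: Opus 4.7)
The plan is to derive both bounds directly from the explicit formulas \eqref{dhc1}--\eqref{dhc2} for $D_rY_{\varepsilon,t}$ and $D_\theta D_rY_{\varepsilon,t}$, combined with H\"older's inequality in time, the moment estimate \eqref{s4hd} on $X_{\varepsilon,s}$, the polynomial growth of $f',f''$ from assumption $(A_3)$, and the derivative estimates \eqref{ct1}--\eqref{ct1a} from Proposition \ref{pr5}. No Gronwall argument is required, because the representations of $DY_{\varepsilon,t}$ and $D^2 Y_{\varepsilon,t}$ are explicit integrals rather than implicit equations.

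For the first bound \eqref{df}, I apply H\"older's inequality to \eqref{dhc1} to get
\[
|D_rY_{\varepsilon,t}|^p\le (t-r)^{p-1}\int_r^t |f'(s,X_{\varepsilon,s})|^p|D_rX_{\varepsilon,s}|^p\,ds.
\]
Taking expectation and using Cauchy--Schwarz inside, the factor $\bigl(E|f'(s,X_{\varepsilon,s})|^{2p}\bigr)^{1/2}$ is uniformly bounded in $(s,\varepsilon)$ by the polynomial growth of $f'$ combined with \eqref{s4hd}, while $\bigl(E|D_rX_{\varepsilon,s}|^{2p}\bigr)^{1/2}\le C\varepsilon^p$ by \eqref{ct1}. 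Integrating over $[r,t]$ gives the required $Ct^p\varepsilon^p$.

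For the second bound \eqref{ddf}, I split the right-hand side of \eqref{dhc2} into the $f''$-term and the $f'$-term and use the elementary inequality $(a+b)^p\le 2^{p-1}(a^p+b^p)$. Each term is then treated by H\"older in time exactly as above. In the $f''$-term, the integrand is estimated by generalized H\"older (three factors), combined with \eqref{s4hd}, polynomial growth of $f''$, and two applications of \eqref{ct1}, yielding a bound of the form $Ct^p\varepsilon^{2p}$. In the $f'$-term, a single Cauchy--Schwarz together with \eqref{s4hd}, polynomial growth of $f'$, and \eqref{ct1a} (which gives $(E|D_\theta D_rX_{\varepsilon,s}|^{2p})^{1/2}\le C\varepsilon^{2p}$) gives the same order $Ct^p\varepsilon^{2p}$.

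There is no substantive obstacle here: the arguments are standard, and all inputs have been prepared in Propositions \ref{pr1} and \ref{pr5}. The only small care needed is the handling of the polynomial growth of $f',f''$: since \eqref{s4hd} controls arbitrary moments of $X_{\varepsilon,s}$ uniformly in $\varepsilon\in(0,1)$ and $s\in[0,T]$, any fixed polynomial moment of $f^{(k)}(s,X_{\varepsilon,s})$ ($k=1,2$) is uniformly bounded, and this allows the Cauchy--Schwarz splits above to go through cleanly.
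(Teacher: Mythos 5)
Your argument is correct and follows essentially the same route as the paper's proof: H\"older's inequality in time applied to the explicit representations of $D_rY_{\varepsilon,t}$ and $D_\theta D_rY_{\varepsilon,t}$, Cauchy--Schwarz (respectively the three-factor H\"older inequality for the $f''$-term), uniform moment bounds on $f'(s,X_{\varepsilon,s})$ and $f''(s,X_{\varepsilon,s})$ coming from polynomial growth together with (\ref{s4hd}), and finally the estimates (\ref{ct1}) and (\ref{ct1a}). Nothing further is needed.
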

\begin{proof} We first note that, by the polynomial growth property of $f',f''$ and the estimate (\ref{s4hd}), we have
\begin{equation}\label{oxzo1}
\sup\limits_{0\leq s\leq T}E|f'(s,X_{\varepsilon,s})|^{2p}+\sup\limits_{0\leq s\leq T}E|f''(s,X_{\varepsilon,s})|^{2p}\leq C
\end{equation}
 for all $\varepsilon\in (0,1),$ where $C$ is a positive constant not depending on $\varepsilon.$

By using H\"older's inequality, it follows from (\ref{dhc1}) that
	\begin{align*}
		E|D_rY_{\varepsilon,t}|^p &=E\left|\int_r^t f'(s,X_{\varepsilon,s})D_rX_{\varepsilon,s}ds \right|^{p}\\
		& \le  (t-r)^{p-1} \int_r^tE|f'(s,X_{\varepsilon,s})D_rX_{\varepsilon,s}|^{p}ds\\
& \le  t^{p-1} \int_r^t\sqrt{E|f'(s,X_{\varepsilon,s})|^{2p}E|D_rX_{\varepsilon,s}|^{2p}}ds\\
& \le  C t^{p-1}\int_r^t\sqrt{E|D_rX_{\varepsilon,s}|^{2p}}ds,
	\end{align*}
which, together with the estimate (\ref{ct1}),  implies (\ref{df}).	The estimate (\ref{ddf}) can be proved similarly. Indeed, we obtain from (\ref{dhc2}) that
\begin{align*}
		E|D_{\theta}& D_rY_{\varepsilon,t}|^p \le  2^{p-1} \left(E\left| \int_{\theta \vee r}^t f''(s, X_{\varepsilon,s})D_{\theta}X_{\varepsilon,s}D_rX_{\varepsilon,s}ds\right|^p+ E\left|\int_{\theta \vee r}^tf'(s, X_{\varepsilon,s})D_{\theta}D_rX_{\varepsilon,s}ds\right|^p\right)\\
&\leq 2^{p-1}t^{p-1} \left(\int_{\theta \vee r}^t E|f''(s, X_{\varepsilon,s})D_{\theta}X_{\varepsilon,s}D_rX_{\varepsilon,s}|^pds+\int_{\theta \vee r}^tE|f'(s, X_{\varepsilon,s})D_{\theta}D_rX_{\varepsilon,s}|^pds\right)\\
&\leq 2^{p-1}t^{p-1} \bigg(\int_{\theta \vee r}^t \sqrt[3]{E|f''(s, X_{\varepsilon,s})|^{3p}E|D_{\theta}X_{\varepsilon,s}|^{3p}E|D_rX_{\varepsilon,s}|^{3p}}ds\\
&\hspace{5cm}+\int_{\theta \vee r}^t\sqrt{E|f'(s, X_{\varepsilon,s})|^{2p}E|D_{\theta}D_rX_{\varepsilon,s}|^{2p}}ds\bigg)\\
&\leq Ct^{p-1} \bigg(\int_{\theta \vee r}^t \sqrt[3]{E|D_{\theta}X_{\varepsilon,s}|^{3p}E|D_rX_{\varepsilon,s}|^{3p}}ds+\int_{\theta \vee r}^t\sqrt{E|D_{\theta}D_rX_{\varepsilon,s}|^{2p}}ds\bigg).
	\end{align*}
So we obtain (\ref{ddf}) by using the estimates (\ref{ct1}) and (\ref{ct1a}). The proof of the proposition is complete.
\end{proof}

\begin{proposition}\label{pr8}Suppose the assumptions $(A_1)$-$(A_3).$ Let $(\tilde{Y}_{\varepsilon,t})_{t\in[0,T]}$ be as in Theorem \ref{theorem2}.
Then, we have
	\begin{align}
		|E[\tilde{Y}_{\varepsilon,t}]|&\le Ct^2\varepsilon\,\,\,\forall\,\varepsilon\in (0,1),t\in[0,T],\label{yy1}\\
		|{\rm Var}(\tilde{Y}_{\varepsilon,t})-\gamma_t^2|&\leq Ct^{7/2}\varepsilon\,\,\,\forall\,\varepsilon\in (0,1),t\in[0,T],\label{yy2}
	\end{align}
where $C$ is a positive constant not depending on $t$ and $\varepsilon.$
\end{proposition}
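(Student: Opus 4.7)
The plan is to mirror the proof of Proposition \ref{pro1}, splitting the argument into the bounds on the expectation and on the variance.

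For (\ref{yy1}), I would first apply a second-order Taylor expansion of $f(s,X_{\varepsilon,s})$ around $f(s,x_s)$ to write
\begin{equation*}
\tilde{Y}_{\varepsilon,t}=\int_0^t f'(s,x_s)\tilde{X}_{\varepsilon,s}ds+\frac{1}{2\varepsilon}\int_0^t f''(s,x_s+\eta_s(X_{\varepsilon,s}-x_s))(X_{\varepsilon,s}-x_s)^2 ds,
\end{equation*}
where $\eta_s\in(0,1)$ is random. Taking expectation, the first integral is handled by (\ref{oo2}), which gives $|E[\tilde{X}_{\varepsilon,s}]|\le Cs^2\varepsilon$; since $x_s$ is bounded on $[0,T]$, $|f'(s,x_s)|$ is bounded and this term contributes at most $Ct^3\varepsilon$. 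The second integral is controlled via Cauchy--Schwarz together with (\ref{oxzo1}) (uniform $L^p$ bound on $f''(s,X_{\varepsilon,s})$) and (\ref{oo1}) applied with $p=4$, giving an overall contribution of order $\varepsilon^{-1}\cdot t\cdot s\varepsilon^2=Ct^2\varepsilon$. Absorbing the $t^3\varepsilon$ term into $Ct^2\varepsilon$ (since $t\le T$) yields the claimed bound.

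For (\ref{yy2}), I would use the Clark--Ocone representation of the variance,
\begin{equation*}
\operatorname{Var}(\tilde{Y}_{\varepsilon,t})=\int_0^t E\bigl[(E[D_r\tilde{Y}_{\varepsilon,t}|\mathcal{F}_r])^2\bigr]dr,
\end{equation*}
and compute, via (\ref{dhc1}) and the explicit representation (\ref{jknm}),
\begin{equation*}
E[D_r\tilde{Y}_{\varepsilon,t}|\mathcal{F}_r]=\sigma(r,X_{\varepsilon,r})A_r,\quad A_r:=E\bigg[\int_r^t f'(s,X_{\varepsilon,s})e^{\int_r^s b'(u,X_{\varepsilon,u})du}Z_{r,s}ds\Big|\mathcal{F}_r\bigg].
\end{equation*}
Setting $B_r:=\int_r^t f'(s,x_s)e^{\int_r^s b'(u,x_u)du}ds$, so that $\gamma_t^2=\int_0^t\sigma^2(r,x_r)B_r^2 dr$, I would decompose analogously to (\ref{oms}):
\begin{equation*}
\operatorname{Var}(\tilde{Y}_{\varepsilon,t})-\gamma_t^2=\int_0^t E\bigl[(\sigma^2(r,X_{\varepsilon,r})-\sigma^2(r,x_r))A_r^2\bigr]dr+\int_0^t \sigma^2(r,x_r)E[A_r^2-B_r^2]dr.
\end{equation*}
The first integral I would estimate by Cauchy--Schwarz: (\ref{ozo1}) applied to $h=\sigma^2$ (whose derivative has polynomial growth since $\sigma$ has linear growth) gives $E|\sigma^2(r,X_{\varepsilon,r})-\sigma^2(r,x_r)|^2\le Cr\varepsilon^2$, while the $L^4$ bound on $A_r$, obtained from (\ref{iif2}) and (\ref{oxzo1}) via Jensen and H\"older, yields $E|A_r|^4\le C(t-r)^4$. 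The resulting integrand is $\sqrt{r}(t-r)^2\varepsilon$, whose integral in $r$ over $[0,t]$ is of order $t^{7/2}\varepsilon$.

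For the second integral I would write $A_r^2-B_r^2=(A_r-B_r)(A_r+B_r)$ and apply Cauchy--Schwarz. The estimate $E|A_r+B_r|^2\le C(t-r)^2$ is straightforward. For $E|A_r-B_r|^2$, I would use Jensen's inequality to move the conditional expectation outside and then split, exactly as in the derivation of (\ref{nvn}), into three pieces controlled respectively by $E|f'(s,X_{\varepsilon,s})-f'(s,x_s)|^2$ (via (\ref{ozo1})), the $L^2$ norm of the exponential difference (via the boundedness of $b'$, (\ref{ozo1}) applied to $b'$, and (\ref{iif2}) for $Z_{r,s}$), and $E|Z_{r,s}-1|^2\le C(s-r)\varepsilon^2$ (from (\ref{axlne}) and the It\^o isometry). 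Each piece is $O(s\varepsilon^2)$ after the $s$-integration, leading to $E|A_r-B_r|^2\le C(t-r)^2 t\varepsilon^2$ and thus $E|A_r^2-B_r^2|\le C(t-r)^2 t^{1/2}\varepsilon$; integrating against $\sigma^2(r,x_r)$ over $[0,t]$ again produces a term of order $t^{7/2}\varepsilon$.

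The main obstacle, I expect, will be tracking the precise powers of $t$ through the nested integrations in the variance estimate: the $(t-r)^2$ factor from $A_r+B_r$ contributes $t^3$ under the outer $dr$ integral, and combining with the $\sqrt{s}\varepsilon$-rate from $X_{\varepsilon,s}-x_s$ supplies the remaining $t^{1/2}\varepsilon$, producing precisely the claimed $Ct^{7/2}\varepsilon$.
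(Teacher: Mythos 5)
Your proposal is correct, and it is worth comparing the two parts separately. For the variance bound (\ref{yy2}) you follow essentially the paper's route: Clark--Ocone, then a difference-times-sum decomposition with the same three-way splitting of the difference (the $f'$ increment, the exponential increment in $b'$, and $Z_{r,s}-1$); the only cosmetic difference is that you factor $\sigma(r,X_{\varepsilon,r})$ out first, mirroring (\ref{oms}), whereas the paper keeps it inside its $H_{t,r}$ and bounds $E|H_{t,r}|^2\le Ct^3\varepsilon^2$, $E|G_{t,r}|^2\le Ct^2$ directly. Both yield $Ct^{7/2}\varepsilon$. For the mean bound (\ref{yy1}) your route is genuinely different and in fact more robust: the paper simply writes $|E[\tilde{Y}_{\varepsilon,t}]|\le \frac{1}{\varepsilon}\int_0^t\sqrt{E|f(s,X_{\varepsilon,s})-f(s,x_s)|^2}\,ds$ and invokes (\ref{ozo1}), but since (\ref{ozo1}) gives $\sqrt{E|\cdot|^2}\le C\sqrt{s}\,\varepsilon$, that chain only produces $Ct^{3/2}$ with \emph{no} factor of $\varepsilon$ --- the $L^1$-via-$L^2$ bound discards the cancellation in the mean. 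Your second-order Taylor expansion recovers that cancellation: the linear term is controlled by $|E[\tilde{X}_{\varepsilon,s}]|\le Cs^2\varepsilon$ from (\ref{oo2}) (contributing $Ct^3\varepsilon$) and the quadratic remainder by Cauchy--Schwarz with (\ref{oo1}) at $p=4$ (contributing $Ct^2\varepsilon$), which is exactly the mechanism used for $\tilde{X}_{\varepsilon,t}$ in Proposition \ref{pro1} and which actually delivers the stated $Ct^2\varepsilon$. The only small point to make explicit is that the intermediate evaluation point $x_s+\eta_s(X_{\varepsilon,s}-x_s)$ in the remainder is handled by the polynomial growth of $f''$ together with (\ref{s4hd}), exactly as in the proof of (\ref{ozo1}).
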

\begin{proof} We recall that
$$  \tilde{Y}_{\varepsilon,t}=\frac{1}{\varepsilon}\int_0^{t}(f(s,X_{\varepsilon,s})-f(s,x_s))ds,\,\,t\in [0,T].$$
Hence, in view of the estimate (\ref{ozo1}), we obtain
	\begin{align*}
		|E[\tilde{Y}_{\varepsilon,t}]|&\leq \frac{1}{\varepsilon}\int_0^{t}E|f(s,X_{\varepsilon,s})-f(s,x_s)|ds\\
&\leq \frac{1}{\varepsilon}\int_0^{t}\sqrt{E|f(s,X_{\varepsilon,s})-f(s,x_s)|^2}ds\\
&\leq Ct^2\varepsilon.
	\end{align*}
This completes the proof of (\ref{yy1}). Let us prove the estimate (\ref{yy2}).	By the Clark-Ocone formula and the It\^o isometry we have
	\begin{align*}
		{\rm Var}(\tilde{Y}_{\varepsilon,t})&=E\left[\int_0^T E[D_r\tilde{Y}_{\varepsilon,t}|\mathcal{F}_r]dB_r\right]^2=\frac{1}{\varepsilon^2}E\left[\int_0^T(E[D_rY_{\varepsilon,t}|\mathcal{F}_r])^2dr\right]\\
		&=\frac{1}{\varepsilon^2}E\left[\int_0^t \left(E\left[\int_r^tf'(s,X_{\varepsilon,s})D_rX_{\varepsilon,s}ds\Big|\mathcal{F}_r\right]\right)^2dr\right].
	\end{align*}
	Then, we obtain
	\begin{align}
		&{\rm Var}(\tilde{Y}_{\varepsilon,t})-\gamma_t^2\notag\\
		&=\frac{1}{\varepsilon^2}E\left[\int_0^t \left(\int_r^tE\left[f'(s,X_{\varepsilon,s})D_rX_{\varepsilon,s}\big|\mathcal{F}_r\right]ds\right)^2dr\right]-\int_0^t \left(\int_r^tf'(s,x_s)\sigma (r,x_r)e ^{\int_r^s b'(u,x_u)du}ds\right)^2dr\notag\\
		&=\int_0^t E[H_{t,r}G_{t,r}]dr,\label{ant1}
	\end{align}
	where, for $0\leq r\leq t\leq T,$ $H_{t,r}$ and $G_{t,r}$ are defined by
	$$H_{t,r}:=\frac{1}{\varepsilon}\int_r^tE\left[f'(s,X_{\varepsilon,s})D_rX_{\varepsilon,s}\big|\mathcal{F}_r\right]ds-
\int_r^tf'(s,x_s)\sigma (r,x_r)e ^{\int_r^s b'(u,x_u)du}ds,$$
	$$G_{t,r}:=\frac{1}{\varepsilon}\int_r^tE\left[f'(s,X_{\varepsilon,s})D_rX_{\varepsilon,s}\big|\mathcal{F}_r\right]ds+
\int_r^tf'(s,x_s)\sigma (r,x_r)e ^{\int_r^s b'(u,x_u)du}ds.$$
Recalling the representation (\ref{jknm})	we have
\begin{align*} H_{t,r}&:=(\sigma(r,X_{\varepsilon,r})-\sigma(r,x_r))\int_r^tE\left[f'(s,X_{\varepsilon,s})e^{\int_r^sb'(u,X_{\varepsilon,u})}Z_{r,s}\big|\mathcal{F}_r\right]ds\\
		&+\sigma(r,x_r)\int_r^tE\left[(f'(s,X_{\varepsilon,s})-f'(s,x_s))e ^{\int_r^s b'(u,x_u)du}Z_{r,s}\big|\mathcal{F}_r\right]ds\\
		&+\sigma(r,x_r)\int_r^tf'(s,x_s)E\left[\big(e^{\int_r^sb'(u,X_{\varepsilon,u})}-e ^{\int_r^s b'(u,x_u)du}\big)Z_{r,s}\big|\mathcal{F}_r\right]ds
	\end{align*}
and hence, note that $b'$ is bounded by $L$ and $\sup\limits_{0\leq r\leq T}|\sigma(r,x_r)|+\sup\limits_{0\leq r\leq T}|f'(r,x_r)|<\infty,$  we deduce
\begin{align*} &|H_{t,r}|\leq C|\sigma(r,X_{\varepsilon,r})-\sigma(r,x_r)|\int_r^tE\left[|f'(s,X_{\varepsilon,s})|Z_{r,s}\big|\mathcal{F}_r\right]ds\\
		&+C\int_r^tE\left[|f'(s,X_{\varepsilon,s})-f'(s,x_s)|Z_{r,s}\big|\mathcal{F}_r\right]ds+C\int_r^tE\left[\int_r^s|b'(u,X_{\varepsilon,u})- b'(u,x_u)|duZ_{r,s}\big|\mathcal{F}_r\right]ds.
\end{align*}
By using the Cauchy-Schwarz inequality we get
\begin{align*} |H_{t,r}|^2&\leq Ct|\sigma(r,X_{\varepsilon,r})-\sigma(r,x_r)|^2\int_r^tE\left[|f'(s,X_{\varepsilon,s})|^2Z_{r,s}^2\big|\mathcal{F}_r\right]ds\\
		&+Ct\int_r^tE\left[|f'(s,X_{\varepsilon,s})-f'(s,x_s)|^2Z_{r,s}^2\big|\mathcal{F}_r\right]ds\\
&+Ct^2\int_r^tE\left[\int_r^s|b'(u,X_{\varepsilon,u})- b'(u,x_u)|^2duZ_{r,s}^2\big|\mathcal{F}_r\right]ds
\end{align*}
and
\begin{align*} &E|H_{t,r}|^2\leq Ct^{\frac{3}{2}}\sqrt{E|\sigma(r,X_{\varepsilon,r})-\sigma(r,x_r)|^4\int_r^tE\left[|f'(s,X_{\varepsilon,s})|^4Z_{r,s}^4\right]ds}\\
		&+Ct\int_r^tE\left[|f'(s,X_{\varepsilon,s})-f'(s,x_s)|^2Z_{r,s}^2\right]ds+Ct^2\int_r^t\int_r^sE\left[|b'(u,X_{\varepsilon,u})- b'(u,x_u)|^2Z_{r,s}^2\right]duds\\
&\leq Ct^{\frac{3}{2}}\sqrt{E|\sigma(r,X_{\varepsilon,r})-\sigma(r,x_r)|^4\int_r^t\sqrt{E|f'(s,X_{\varepsilon,s})|^8E|Z_{r,s}|^8}ds}\\
		&+Ct\int_r^t\sqrt{E|f'(s,X_{\varepsilon,s})-f'(s,x_s)|^4E|Z_{r,s}|^4}ds\\
&+Ct^2\int_r^t\int_r^s\sqrt{E|b'(u,X_{\varepsilon,u})- b'(u,x_u)|^4E|Z_{r,s}|^4}duds.
\end{align*}
In view of the estimates (\ref{ozo1}), (\ref{iif2}) and (\ref{oxzo1}), we therefore obtain
\begin{equation}\label{ant2}
E|H_{t,r}|^2\leq Ct^3\varepsilon^2\,\,\,\forall\,\varepsilon\in (0,1),0\leq r\leq t\leq T,
\end{equation}
where $C$ is a positive constant not depending on $t$ and $\varepsilon.$ On the other hand, we have
\begin{align}
E|G_{t,r}|^2&=E\bigg|H_{t,r}+2\int_r^tf'(s,x_s)\sigma (r,x_r)e ^{\int_r^s b'(u,x_u)du}ds\bigg|^2\notag\\
&\leq 2E|H_{t,r}|^2+8\bigg|2\int_r^tf'(s,x_s)\sigma (r,x_r)e ^{\int_r^s b'(u,x_u)du}ds\bigg|^2\notag\\
&\leq C t^2\,\,\,\forall\,\varepsilon\in (0,1),0\leq r\leq t\leq T.\label{ant3}
\end{align}
Combining (\ref{ant1}), (\ref{ant2}) and (\ref{ant3}) gives us
\begin{align*}
		|{\rm Var}(\tilde{Y}_{\varepsilon,t})-\sigma_t^2|&= \int_0^tE|H_{t,r}G_{t,r}|dr\\
		&\leq \int_0^t\sqrt{E|H_{t,r}|^2E|G_{t,r}|^2}dr\\
&\le C t^{\frac{7}{2}}\varepsilon\,\,\,\forall\,\varepsilon\in (0,1),t\in[0,T].
\end{align*}
	This finishes the proof of (\ref{yy2}). The proof of the proposition is complete.
\end{proof}
\begin{proposition}\label{rspr8}Let $(\tilde{Y}_{\varepsilon,t})_{t\in[0,T]}$ be as in Theorem \ref{theorem2}. Define
$$\Theta_{\tilde{Y}_{\varepsilon,t}}:=\int_0^tD_r\tilde{Y}_{\varepsilon,t}E[D_r\tilde{Y}_{\varepsilon,t}|\mathcal{F}_r]dr,\,\,t\in [0,T]. $$
Then, under the assumption of Theorem \ref{theorem2}, we have
\begin{align}\label{momenam}
E|\Theta_{\tilde{Y}_{\varepsilon,t}}|^{-p}\le C\left(E\bigg|\int_0^t(t-r)^2\sigma^2(r,X_{\varepsilon,r})dr\bigg|^{-p_0}\right)^{\frac{p}{p_0}}\,\,\,\forall\,\varepsilon\in (0,1),t \in (0,T],
\end{align}
where $0<p<p_0$ and $C>0$ is a positive constant not depending on $t$ and $\varepsilon.$

\end{proposition}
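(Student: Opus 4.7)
The plan is to mirror the argument of Proposition \ref{momenam1}, replacing the lower bound $\int_0^t\sigma^2(r,X_{\varepsilon,r})dr$ by a weighted version with weight $(t-r)^2$, the extra factor $(t-r)^2$ coming from the two time integrals (one direct, one conditional) that are now hidden inside $D_r\tilde{Y}_{\varepsilon,t}$.

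First I would make $D_r\tilde{Y}_{\varepsilon,t}$ explicit. Using the chain rule formula (\ref{dhc1}) together with the exponential representation (\ref{jknm}) for $D_rX_{\varepsilon,s}$, I get
\begin{equation*}
D_r\tilde{Y}_{\varepsilon,t}=\tfrac{1}{\varepsilon}D_rY_{\varepsilon,t}=\sigma(r,X_{\varepsilon,r})\int_r^t f'(s,X_{\varepsilon,s})\,e^{\int_r^sb'(u,X_{\varepsilon,u})du}Z_{r,s}\,ds,
\end{equation*}
where $Z_{r,s}$ is as in (\ref{alne}). Since $\sigma(r,X_{\varepsilon,r})$ is $\mathcal{F}_r$-measurable and, thanks to assumption $(A_3)$, the integrand is strictly positive, the product $D_r\tilde{Y}_{\varepsilon,t}\,E[D_r\tilde{Y}_{\varepsilon,t}|\mathcal{F}_r]$ factors as $\sigma^2(r,X_{\varepsilon,r})\,A_r\,B_r$ with
\begin{equation*}
A_r=\int_r^t f'(s,X_{\varepsilon,s})e^{\int_r^sb'(u,X_{\varepsilon,u})du}Z_{r,s}\,ds,\qquad B_r=\int_r^t E\!\left[f'(s,X_{\varepsilon,s})e^{\int_r^sb'(u,X_{\varepsilon,u})du}Z_{r,s}\big|\mathcal{F}_r\right]ds,
\end{equation*}
and both $A_r,B_r\geq 0$. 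In particular $\Theta_{\tilde{Y}_{\varepsilon,t}}\geq 0$ and the absolute value can be dropped.

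Next I would lower-bound $A_r$ and $B_r$ separately. For $B_r$, using $f'\geq\|f'\|_0>0$ from $(A_3)$, $b'\geq -L$ from $(A_2)$, and the crucial fact that $E[Z_{r,s}|\mathcal{F}_r]=1$, I obtain $B_r\geq \|f'\|_0 e^{-LT}(t-r)$. For $A_r$, the same bounds give $A_r\geq \|f'\|_0 e^{-LT}\int_r^t Z_{r,s}\,ds$; and using $\sigma'^2\leq L^2$ together with $\varepsilon\in(0,1)$,
\begin{equation*}
Z_{r,s}\geq e^{-L^2T/2}\exp\!\bigl(\varepsilon\!\int_r^s\!\sigma'(u,X_{\varepsilon,u})dB_u\bigr)\geq e^{-L^2T/2}\exp\bigl(-2\max_{0\le u\le T}|M_u|\bigr),
\end{equation*}
where $M_u:=\varepsilon\int_0^u\sigma'(v,X_{\varepsilon,v})dB_v$. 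Hence $\int_r^t Z_{r,s}\,ds\geq e^{-L^2T/2}(t-r)e^{-2\max_{u\le T}|M_u|}$.

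Multiplying the two bounds and integrating against $\sigma^2(r,X_{\varepsilon,r})\,dr$, I arrive at
\begin{equation*}
\Theta_{\tilde{Y}_{\varepsilon,t}}\geq c_0\,e^{-2\max_{0\le u\le T}|M_u|}\int_0^t (t-r)^2\sigma^2(r,X_{\varepsilon,r})\,dr,
\end{equation*}
with $c_0=\|f'\|_0^2 e^{-2LT-L^2T/2}$. Since $\langle M\rangle_T\leq L^2T$ uniformly in $\varepsilon$, the Dubins--Schwarz theorem produces a standard Brownian motion $m$ with $\max_{u\le T}|M_u|\leq \max_{0\le s\le L^2T}|m_s|$, and by Fernique's theorem $E\bigl[\exp(q\max_{0\le s\le L^2T}|m_s|)\bigr]<\infty$ for every $q>0$. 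A Hölder split with conjugate exponents $p_0/p$ and $p_0/(p_0-p)$ then yields
\begin{equation*}
E|\Theta_{\tilde{Y}_{\varepsilon,t}}|^{-p}\leq c_0^{-p}\Bigl(E\bigl[e^{2p\frac{p_0}{p_0-p}\max_{0\le s\le L^2T}|m_s|}\bigr]\Bigr)^{\!\frac{p_0-p}{p_0}}\!\Bigl(E\bigl|\textstyle\int_0^t(t-r)^2\sigma^2(r,X_{\varepsilon,r})dr\bigr|^{-p_0}\Bigr)^{\!\frac{p}{p_0}},
\end{equation*}
which is the required bound, the prefactor being a constant independent of $t$ and $\varepsilon$.

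The only delicate point, and where I expect the main obstacle to hide, is ensuring that the pointwise lower bound for the stochastic exponential $Z_{r,s}$ is made uniform in the two parameters $r\le s\le T$ (so that the same $\max_{u\le T}|M_u|$ controls all of them), and that this uniformity is preserved when one takes the conditional expectation in $B_r$ --- which is why I use the identity $E[Z_{r,s}|\mathcal{F}_r]=1$ directly on $B_r$ rather than trying to lower-bound $Z_{r,s}$ inside the conditional expectation.
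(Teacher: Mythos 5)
Your proposal is correct and follows essentially the same route as the paper: the paper likewise factors out $\sigma^2(r,X_{\varepsilon,r})$ via the explicit representation of $D_rX_{\varepsilon,s}$, uses $f'\ge\|f'\|_0>0$ and $E[Z_{r,s}|\mathcal{F}_r]=1$ to extract the $(t-r)^2$ weight, bounds the stochastic exponential below by $e^{-2\max M}$ via Dubins--Schwarz and Fernique, and concludes with the same H\"older split with exponents $p_0/p$ and $p_0/(p_0-p)$. Your use of $\max_u|M_u|$ rather than $\max_u M_u$ is in fact slightly more careful than the paper's, and your explicit treatment of the conditional expectation in $B_r$ is exactly the point the paper leaves to ``the same arguments as in Proposition \ref{momenam1}.''
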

\begin{proof}We have
\begin{align*}
	\Theta_{\tilde{Y}_{\varepsilon,t}}&=\frac{1}{\varepsilon^2}\int_0^tD_rY_{\varepsilon,t}E[D_rY_{\varepsilon,t}|\mathcal{F}_r]dr\\
& = \frac{1}{\varepsilon^2}\int_0^t \bigg(\int_r^tf'(s, X_{\varepsilon,s})D_rX_{\varepsilon,s}ds\bigg)\bigg(\int_r^tE\left[f'(s, X_{\varepsilon,s})D_rX_{\varepsilon,s}|\mathcal{F}_r\right]ds\bigg)dr.
\end{align*}
Note that $\|f'\|_0:=\inf\limits_{(t,x)}f'(t,x)>0.$ Hence, by using the same arguments as in the proof of Proposition \ref{momenam1}, we obtain
\begin{align*}
	\Theta_{\tilde{Y}_{\varepsilon,t}}& \geq  \|f'\|^2_0 e^{-\frac{5LT}{2}}e^{-2\max\limits_{0\le t\le T}M_t}\int_0^t(t-r)^2\sigma^2(r,X_{\varepsilon,r})dr
\end{align*}
and, for $0<p<p_0,$ we get
\begin{align*}
	E|\Theta_{\tilde{Y}_{\varepsilon,t}}|^{-p}
&\leq C\left(E\bigg|\int_0^t(t-r)^2\sigma^2(r,X_{\varepsilon,r})dr\bigg|^{-p_0}\right)^{\frac{p}{p_0}}\,\,\,\forall\,\varepsilon\in (0,1),t \in (0,T],
\end{align*}
where $C>0$ is a positive constant not depending on $t$ and $\varepsilon.$ The proof of the proposition is complete.
\end{proof}
\begin{proposition}\label{rsprx}Let $(\Theta_{\tilde{Y}_{\varepsilon,t}})_{t\in[0,T]}$ be as in Proposition \ref{rspr8}. Suppose the assumptions $(A_1)$-$(A_3).$ Then, we have
\begin{align}\label{Dgamma}
	E\|D\Theta_{\tilde{Y}_{\varepsilon,t}}\|_{L^2[0, T]}^4 \le C t^{14}\varepsilon^4\,\,\,\forall\,\varepsilon\in (0,1),t \in [0,T],
\end{align}
where $C$ is a positive constant not depending on $t$ and $\varepsilon.$
\end{proposition}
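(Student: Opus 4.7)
My plan is to follow the same template as the proof of Proposition \ref{pr3.7} (the analogous result for $\tilde{X}_{\varepsilon,t}$), but substituting the moment bounds for the first and second Malliavin derivatives of $Y_{\varepsilon,t}$ provided by Proposition \ref{pr7} in place of those for $X_{\varepsilon,t}$. The final exponent $t^{14}\varepsilon^4$ is consistent with the fact that each factor of $D_rY_{\varepsilon,t}$ now contributes an extra $t$ (and $\varepsilon$) compared with the $X$-case, while second derivatives contribute $t\varepsilon^2$ instead of $\varepsilon^2$; routine bookkeeping should yield precisely the stated bound.

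The first step is to compute $D_{\theta}\Theta_{\tilde{Y}_{\varepsilon,t}}$ by differentiating inside the integral and swapping $D_{\theta}$ with the conditional expectation, obtaining
\begin{equation*}
D_{\theta}\Theta_{\tilde{Y}_{\varepsilon,t}}=\frac{1}{\varepsilon^2}\int_0^t\!\Bigl(D_{\theta}D_rY_{\varepsilon,t}\,E[D_rY_{\varepsilon,t}|\mathcal{F}_r]+D_rY_{\varepsilon,t}\,E[D_{\theta}D_rY_{\varepsilon,t}|\mathcal{F}_r]\Bigr)\,dr,
\end{equation*}
exactly parallel to (\ref{dhg}). Since $D_{\theta}Y_{\varepsilon,t}=0$ for $\theta>t$, we only integrate $\theta$ over $[0,t]$.

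Next, I apply the Hölder inequality twice. First, inside the squared $L^2$-norm, bound
$|D_{\theta}\Theta_{\tilde{Y}_{\varepsilon,t}}|^{4}\leq t^{3}\int_{0}^{t}|\cdot|^{4}\,dr$;
then
$\|D\Theta_{\tilde{Y}_{\varepsilon,t}}\|_{L^{2}[0,T]}^{4}=\bigl(\int_{0}^{t}|D_{\theta}\Theta_{\tilde{Y}_{\varepsilon,t}}|^{2}\,d\theta\bigr)^{2}\leq t\int_{0}^{t}|D_{\theta}\Theta_{\tilde{Y}_{\varepsilon,t}}|^{4}\,d\theta$.
Combining these gives
\begin{equation*}
E\|D\Theta_{\tilde{Y}_{\varepsilon,t}}\|_{L^{2}[0,T]}^{4}\leq \frac{C\,t^{4}}{\varepsilon^{8}}\int_{0}^{t}\!\!\int_{0}^{t}\Bigl(E\bigl|D_{\theta}D_rY_{\varepsilon,t}\,E[D_rY_{\varepsilon,t}|\mathcal{F}_r]\bigr|^{4}+E\bigl|D_rY_{\varepsilon,t}\,E[D_{\theta}D_rY_{\varepsilon,t}|\mathcal{F}_r]\bigr|^{4}\Bigr)\,dr\,d\theta.
\end{equation*}

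Then, by conditional Jensen ($E|E[Z|\mathcal{F}_r]|^{8}\leq E|Z|^{8}$) and Cauchy--Schwarz, each integrand is bounded by $\sqrt{E|D_{\theta}D_rY_{\varepsilon,t}|^{8}\,E|D_rY_{\varepsilon,t}|^{8}}$. Plugging in the estimates from Proposition \ref{pr7}, namely $E|D_rY_{\varepsilon,t}|^{8}\leq Ct^{8}\varepsilon^{8}$ and $E|D_{\theta}D_rY_{\varepsilon,t}|^{8}\leq Ct^{8}\varepsilon^{16}$, the square root is bounded by $Ct^{8}\varepsilon^{12}$. The double integral over $[0,t]^{2}$ contributes a factor $t^{2}$, giving
$E\|D\Theta_{\tilde{Y}_{\varepsilon,t}}\|_{L^{2}[0,T]}^{4}\leq C\,t^{4}\cdot t^{2}\cdot t^{8}\varepsilon^{12}/\varepsilon^{8}=Ct^{14}\varepsilon^{4}$, which is the desired bound.

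There is no real conceptual obstacle here; the only thing to be careful about is the bookkeeping of the factors $t$ and $\varepsilon$ (in particular the factor $1/\varepsilon^{8}$ coming from squaring $\Theta_{\tilde{Y}_{\varepsilon,t}}=\Theta_{Y_{\varepsilon,t}}/\varepsilon^{2}$, and keeping the integration over $\theta$ confined to $[0,t]$ so that powers of $t$ come out as $t^{14}$ rather than $t^{12}T^{2}$), and checking that the conditional Jensen/Cauchy--Schwarz chain does not lose any power of $\varepsilon$. This is the same mechanism as in Proposition \ref{pr3.7}, so the proof can be modeled almost verbatim on that argument.
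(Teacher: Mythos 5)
Your proposal is correct and follows essentially the same argument as the paper: the identical formula for $D_\theta\Theta_{\tilde{Y}_{\varepsilon,t}}$, the same two applications of H\"older's inequality producing the factor $t^4/\varepsilon^8$, the same conditional Jensen plus Cauchy--Schwarz step yielding $\sqrt{E|D_\theta D_rY_{\varepsilon,t}|^8\,E|D_rY_{\varepsilon,t}|^8}$, and the same substitution of the bounds from Proposition \ref{pr7}. The bookkeeping $t^4\cdot t^2\cdot t^8\varepsilon^{12}/\varepsilon^8=t^{14}\varepsilon^4$ matches the paper's computation exactly.
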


\begin{proof} The Malliavin derivative of $\Theta_{\tilde{Y}_{\varepsilon,t}}$ can be computed by
\begin{align*}
	D_{\theta}\Theta_{\tilde{Y}_{\varepsilon,t}} =\frac{1}{\varepsilon^2}\int_{0}^t\left(D_{\theta} D_rY_{\varepsilon,t} E[D_rY_{\varepsilon,t}|\mathcal{F}_r] + D_rY_{\varepsilon,t}E[D_{\theta}D_rY_{\varepsilon,t}|\mathcal{F}_r]\right)dr,\,\,0\leq \theta\leq t.
\end{align*}
Hence,
\begin{align*}
	\|D\Theta_{\tilde{Y}_{\varepsilon,t}}\|^2_{L^2[0, T]}&= \int_0^T|D_{\theta}\Theta_{\tilde{Y}_{\varepsilon,t}}|^2 d\theta\\
	&=\int_0^t\frac{1}{\varepsilon^4} \left(\int_{0}^t\left(D_{\theta} D_rY_{\varepsilon,t} E[D_rY_{\varepsilon,t}|\mathcal{F}_r] + D_rY_{\varepsilon,t}E[D_{\theta}D_rY_{\varepsilon,t}|\mathcal{F}_r]\right)dr\right)^2d\theta.
\end{align*}
Using the H\"{o}lder's inequality, we obtain
\begin{align*}
	E\|D\Theta_{\tilde{Y}_{\varepsilon,t}}\|_{L^2[0, T]}^4&\le \frac{t}{\varepsilon^8}\int_0^{t}E\left|\int_{0}^t\left(D_{\theta} D_rY_{\varepsilon,t} E[D_rY_{\varepsilon,t}|\mathcal{F}_r] + D_rY_{\varepsilon,t}E[D_{\theta}D_rY_{\varepsilon,t}|\mathcal{F}_r]\right)dr\right|^4d\theta
	\\
	&\le \frac{t^4}{\varepsilon^8}\int_0^{t}\int_{0}^tE\left|D_{\theta} D_rY_{\varepsilon,t} E[D_rY_{\varepsilon,t}|\mathcal{F}_r] + D_rY_{\varepsilon,t}E[D_{\theta}D_rY_{\varepsilon,t}|\mathcal{F}_r]\right|^4drd\theta\\
	&\le \frac{8t^4}{\varepsilon^8}\int_0^{t}\int_{0}^t \left(E\left|D_{\theta} D_rY_{\varepsilon,t} E[D_rY_{\varepsilon,t}|\mathcal{F}_r]\right|^4+ E\left|D_rY_{\varepsilon,t}E[D_{\theta}D_rY_{\varepsilon,t}|\mathcal{F}_r]\right|^4\right)drd\theta\\
&\le \frac{16t^4}{\varepsilon^8}\int_0^{t}\int_{0}^t \sqrt{E|D_{\theta} D_rY_{\varepsilon,t}|^8 E|D_rY_{\varepsilon,t}|^8}drd\theta.
\end{align*}
This, combined with (\ref{df}) and (\ref{ddf}), gives us the estimate (\ref{Dgamma}). The proof of the proposition is complete.
\end{proof}

\noindent{\it Proof of Theorem \ref{theorem2}.}  Set $\tilde{u}_r = E[D_r\tilde{Y}_{\varepsilon,t}|\mathcal{F}_r]$ for $0\leq r\leq t\leq T.$ We have
\begin{align*}
	\| \tilde{u}\|^8_{L^2[0,T]} = \left[\int_0^T \big|E[D_r\tilde{Y}_{\varepsilon,t}|\mathcal{F}_r]\big|^2dr\right]^4= \frac{1}{\varepsilon^8}\left[\int_0^t \big|E[D_rY_{\varepsilon,t}|\mathcal{F}_r]\big|^2dr\right]^4.
\end{align*}
Using H\"{o}lder's inequality and the estimate (\ref{df}) we deduce
\begin{align}
	E\| \tilde{u}\|_{L^2[0,T]}^8&\le  \frac{t^3}{\varepsilon^8} \int_0^tE\big |E[D_rY_{\varepsilon,t}|\mathcal{F}_r]\big|^{8}dr\notag\\
	&\le  \frac{t^3}{\varepsilon^8}\int_0^tE[|D_rY_{\varepsilon,t}|^8]dr \le Ct^{12}.\label{xDgamma}
\end{align}
We now apply Theorem \ref{lm2} to $F=\tilde{Y}_{\varepsilon,t}$ and $N=Z_t$ to get
\begin{align*}
	I(\tilde{Y}_{\varepsilon,t}\|N_t)\leq c\left(\frac{1}{\gamma_t^4}(E[\tilde{Y}_{\varepsilon,t}])^2+A_{\tilde{Y}_{\varepsilon,t}}|{\rm Var}(\tilde{Y}_{\varepsilon,t})-\gamma_t^2|^2+C_{\tilde{Y}_{\varepsilon,t}}\left(E\|D\Theta_{\tilde{Y}_{\varepsilon,t}}\|_{L^2[0, T]}^4\right)^{1/2}\right),
\end{align*}
where $c$ is an absolute constant and
$$
	A_{\tilde{Y}_{\varepsilon,t}}:= \frac{1}{\gamma_t^4}\left(E\|\tilde{u}\|^{8} _{L^2[0,T]}E|\Theta_{\tilde{Y}_{\varepsilon,t}}|^{-8}\right)^{1/4},
	C_{\tilde{Y}_{\varepsilon,t}}:=A_{\tilde{Y}_{\varepsilon,t}}+\left(E\|\tilde{u}\|_{L^2[0,T]}^8E|\Theta_{\tilde{Y}_{\varepsilon,t}}|^{-16}\right)^{1/4}.
$$
It follows from the estimates (\ref{momenam}) and (\ref{xDgamma}) that
\begin{align*}
&A_{\tilde{X}_{\varepsilon,t}}\le  \frac{Ct^3}{\gamma_t^4}\left(E\bigg|\int_0^t(t-r)^2\sigma^2(r,X_{\varepsilon,r})dr\bigg|^{-p_0}\right)^{\frac{2}{p_0}},\\
&C_{\tilde{X}_{\varepsilon,t}}\le \frac{Ct^3}{\gamma_t^4}\left(E\bigg|\int_0^t(t-r)^2\sigma^2(r,X_{\varepsilon,r})dr\bigg|^{-p_0}\right)^{\frac{2}{p_0}}
+Ct^3\left(E\bigg|\int_0^t(t-r)^2\sigma^2(r,X_{\varepsilon,r})dr\bigg|^{-p_0}\right)^{\frac{4}{p_0}}.
\end{align*}
Furthermore, thanks to Propositions \ref{pr8} and \ref{rsprx}, we have
\begin{align*}
&|E[\tilde{Y}_{\varepsilon,t}]|^2\leq Ct^4\varepsilon^2,\\
&|{\rm Var}(\tilde{Y}_{\varepsilon,t})-\beta_t^2|^2\leq Ct^7\varepsilon^2,\\
&\left(E\|D\Theta_{\tilde{Y}_{\varepsilon,t}}\|^4\right)^{1/2}\leq C t^{7}\varepsilon^2.
\end{align*}
Combining the above computations gives us
\begin{multline*}
I(\tilde{Y}_{\varepsilon,t}\|Z_t)\leq C\bigg(\frac{t^4}{\gamma_t^4}+\frac{t^{10}}{\gamma_t^4}\bigg(E\bigg|\int_0^t(t-r)^2\sigma^2(r,X_{\varepsilon,r})dr\bigg|^{-p_0}\bigg)^{\frac{2}{p_0}}\\+
t^{10}\bigg(E\bigg|\int_0^t(t-r)^2\sigma^2(r,X_{\varepsilon,r})dr\bigg|^{-p_0}\bigg)^{\frac{4}{p_0}}\bigg)\varepsilon^2.
\end{multline*}
So the proof of Theorem \ref{theorem2} is complete.\hfill$\square$

\section*{Appendix. Negative moment of Volterra functionals}

\noindent{\bf Lemma A.} {\it Let $(Y_t)_{t\in[0,T]}$ be a stochastic process such that $Y_0$ is deterministic and $E|Y_t-Y_0|^p\leq Ct^{p\alpha}$ for some $C>0,\alpha>0$ and for all $t\in [0,T],p>1.$ Let $h:[0,T]\times \mathbb{R}\to \mathbb{R}$ and $k:[0,T]^2\to \mathbb{R}_+$ be continuous functions such that $h(0,X_0)\neq 0,$ $k(t,0)\neq 0$ for all $t\in(0,T]$ and
\begin{align}
&|h(t,x)-h(s,y)|\leq L(|t-s|^{\delta_1}+|x-y|^{\delta_2})\,\,\,\forall x,y\in \mathbb{R},s,t\in[0,T]\\\label{7gey3}
&|k(t,s)-k(t,0)|\leq L|t-s|^{\delta_3}\,\,\,\forall s,t\in[0,T]
\end{align}
for some positive constants $L,\delta_1,\delta_2$ and $\delta_3.$ Then, for  $\delta_0:=\min\{\delta_1,\alpha\delta_2,\delta_3\}$  and for all $p_0>0,$ we have
\begin{equation}\label{7gy3}
E\left[\bigg(\int_0^tk(t,r)h^2(r,Y_r)dr\bigg)^{-p_0}\right]\leq C\bigg(\frac{1}{tk(t,0)}\bigg)^{p_0}\left(1+ \bigg(\frac{t^{\delta_0}}{k(t,0)}\bigg)^{p}\right)
\end{equation}
for all $t\in(0,T]$ and $p>\frac{p_0}{\delta_0},$ where $C$ is a positive constant not depending on $t.$}

\begin{proof}Fixed $t\in(0,T].$ Put $y_0:=\frac{2}{th^2(0,Y_0)}.$ Then, for any $y\geq y_0,$ we have $\eta_y:=\frac{2}{tyh^2(0,Y_0)}\in (0,1).$ Hence,
\begin{align*}
\frac{1}{k(t,0)}\int_0^tk(t,r)&h^2(r,Y_r)dr\geq \frac{1}{k(t,0)}\int_0^{\eta_yt}k(t,r)h^2(r,Y_r)dr\\
&= h^2(0,Y_0)\eta_yt+\frac{1}{k(t,0)}\int_0^{\eta_yt}(k(t,r)h^2(r,Y_r)-k(t,0)h^2(0,Y_0))dr\\
&\geq \frac{2}{y}-\frac{1}{k(t,0)}\int_0^{\eta_yt}|k(t,r)h^2(r,Y_r)-k(t,0)h^2(0,Y_0)|dr.
\end{align*}
Consequently, by Markov inequality, we obtain
\begin{align*}
P\bigg(\frac{1}{k(t,0)}&\int_0^tk(t,r)h^2(r,Y_r)dr\leq \frac{1}{y}\bigg)\\
&\leq P\left(\frac{1}{k(t,0)}\int_0^{\eta_yt}|k(t,r)h^2(r,Y_r)-k(t,0)h^2(0,Y_0)|dr\geq \frac{1}{y}\right)\\
&\leq \frac{y^p}{k^p(t,0)}E\left|\int_0^{\eta_yt}|k(t,r)h^2(r,Y_r)-k(t,0)h^2(0,Y_0)|dr\right|^p\,\,\,\forall\,p>1.
\end{align*}
We now use H\"older's inequality to get
\begin{align*}
P&\bigg(\frac{1}{k(t,0)}\int_0^tk(t,r)h^2(r,Y_r)dr\leq \frac{1}{y}\bigg)\\
&\leq \frac{y^p(\eta_yt)^{p-1}}{k^p(t,0)}\int_0^{\eta_yt}E|k(t,r)h^2(r,Y_r)-k(t,0)h^2(0,Y_0)|^pdr\\
&\leq \frac{2^{p-1}y^p(\eta_yt)^{p-1}}{k^p(t,0)}\int_0^{\eta_yt}\left(k^p(t,r)E|h^2(r,Y_r)-h^2(0,Y_0)|^p+h^{2p}(0,Y_0)|k(t,r)-k(t,0)|^p\right)dr\\
&\leq \frac{Cy^p(\eta_yt)^{p-1}}{k^p(t,0)}\int_0^{\eta_yt}\left(E|h^2(r,Y_r)-h^2(0,Y_0)|^p+r^{p\delta_3}\right)dr\,\,\,\forall\,p>1,
\end{align*}
where $C$ is a positive constant not depending on $t.$ We observe that
\begin{align*}
E|h(r,Y_r)-h(0,Y_0)|^{2p}&\leq 2^{2p-1}L^{2p}(r^{2p\delta_1}+E|Y_r-Y_0|^{2p\delta_2})\\
&\leq C(r^{2p\delta_1}+r^{2p\alpha\delta_2})
\end{align*}
for all $r\in[0,T].$  We also have
$$E|h(r,Y_r)+h(0,Y_0)|^{2p}\leq E|h(r,Y_r)-h(0,Y_0)|^{2p}+|2h(0,Y_0)|^{2p}\leq C$$
for all $r\in[0,T].$ So it holds that
\begin{align*}
E|h^2(r,Y_r)-h^2(0,Y_0)|^p&\leq \sqrt{E|h(r,Y_r)+h(0,Y_0)|^{2p}E|h(r,Y_r)-h(0,Y_0)|^{2p}}\\
&\leq C(r^{2p\delta_1}+r^{2p\alpha\delta_2}),\,\,r\in[0,T].
\end{align*}
Combining the above estimates yields
\begin{align*}
P\left(\frac{1}{k(t,0)}\int_0^tk(t,r)h^2(r,Y_r)dr\leq \frac{1}{y}\right)\leq \frac{Cy^p(\eta_yt)^{(1+\delta_0)p}}{k^p(t,0)}\leq \frac{Cy^{-p\delta_0}}{k^p(t,0)}\,\,\,\forall\,p>1,
\end{align*}
Then, for all $p>\frac{p_0}{\delta_0},$ we obtain
\begin{align*}
E\bigg[\bigg(\int_0^tk(t,r)&h^2(r,Y_r)dr\bigg)^{-p_0}\bigg]=\frac{1}{k^{p_0}(t,0)}E\left[\bigg(\frac{1}{k(t,0)}\int_0^tk(t,r)h^2(r,Y_r)dr\bigg)^{-p_0}\right]\\
&=\frac{1}{k^{p_0}(t,0)}\int_0^\infty p_0y^{p_0-1}P\left(\frac{1}{k(t,0)}\int_0^tk(t,r)h^2(r,Y_r)dr\leq \frac{1}{y}\right)dy\\
&\leq \frac{1}{k^{p_0}(t,0)}\left(\int_0^{y_0} p_0y^{p_0-1}dy+ \int_{y_0}^\infty p_0y^{p_0-1}P\left(\int_0^t\frac{1}{k(t,0)}h^2(r,Y_r)dr\leq \frac{1}{y}\right)dy\right)\\
&\leq \frac{1}{k^{p_0}(t,0)}\left(y_0^{p_0}+ \frac{C}{k^p(t,0)}\int_{y_0}^\infty p_0y^{p_0-1}y^{-p\delta_0}dy\right)\,\,\,\forall\,p>1.
\end{align*}
We now choose to use $p=1+\frac{p_0}{\delta_0}.$ This choice gives us the following
$$E\left[\bigg(\int_0^tk(t,r)h^2(r,Y_r)dr\bigg)^{-p_0}\right]\leq C\bigg(\frac{1}{tk(t,0)}\bigg)^{p_0}\left(1+ \bigg(\frac{t^{\delta_0}}{k(t,0)}\bigg)^{p}\right).$$
So we obtain the desired estimate (\ref{7gy3}) because $y_0:=\frac{2}{th^2(0,Y_0)}.$
\end{proof}

\noindent {\bf Acknowledgments.}  The authors would like to thank the anonymous referees for valuable comments which led to the improvement of this work.

\noindent {\bf Declaration of Competing Interest.} The authors declare that they have no known competing financial interests or personal relationships that could have appeared to influence the work reported in this paper.


\begin{thebibliography}{99}

\bibitem{Bobkov2014} S. G. Bobkov, G. P. Chistyakov, F. G\"otze, Berry-Esseen bounds in the entropic central limit theorem. {\it Probab. Theory Related Fields} 159 (2014), no. 3-4, 435--478.









\bibitem{Dung2015} N. T. Dung, N. Privault, G. L. Torrisi, Gaussian estimates for the solutions of some one-dimensional stochastic equations. {\it Potential Anal.} 43 (2015), no. 2, 289--311.

\bibitem{DungNT2022}N. T. Dung, T. C. Son, Lipschitz continuity in the Hurst index of the solutions of fractional stochastic volterra integro-differential equations. {\it Stoch. Anal. Appl.} 41 (2023), no. 4, 693--712.

\bibitem{Freidlin2012} M. I. Freidlin, A. D.  Wentzell, {\it Random perturbations of dynamical systems.} Translated from the 1979 Russian original by Joseph Szücs. Third edition. Grundlehren der mathematischen Wissenschaften [Fundamental Principles of Mathematical Sciences], 260. Springer, Heidelberg, 2012.



\bibitem{B-J} O. Johnson, A. R. Barron, Fisher information inequalities and the central  limit theorem. {\it Probab. Theory Related Fields,} 129 (2004), no. 3, 391--409.
\bibitem{Johnson2004} O. Johnson, {\it Information theory and the central limit theorem.} Imperial College Press, London, 2004.
\bibitem{Johnson2020} O. Johnson,  Maximal correlation and the rate of Fisher information convergence in the central limit theorem. {\it IEEE Trans. Inform. Theory} 66 (2020), no. 8, 4992--5002.



\bibitem{Kutoyants1994} Y. Kutoyants, {\it Identification of dynamical systems with small noise.} Mathematics and its Applications, 300. Kluwer Academic Publishers Group, Dordrecht, 1994.


\bibitem{Linnik1959}  J. V. Linnik, An information-theoretic proof of the central limit theorem with Lindeberg conditions. (Russian) {\it Theor. Probability Appl.} 4 (1959), 288--299.	
\bibitem{Nourdin1} I. Nourdin, D. Nualart, Fisher information and the fourth moment theorem. {\it Ann. Inst. Henri Poincar\'e Probab. Stat.} 52 (2016), no. 2, 849--867.
\bibitem{Nourdin2009} I. Nourdin, G. Peccati, G. Reinert, Second order Poincar\'e inequalities and CLTs on Wiener space. {\it J. Funct. Anal.} 257 (2009), no. 2, 593--609.




\bibitem{nualartm2} D. Nualart, {\em The Malliavin calculus and related topics}. Probability and its Applications. Springer-Verlag, Berlin, second
	edition, 2006.

\bibitem{Shimizu1} R. Shimizu, On Fisher's amount of information for location family. {\it In A Modern Course on Statistical Distribution in Scientific Work 3.} 305-312. D. Reidel Publishing Company, Dordrecht, 1975.

\bibitem{Pinsker1964} M. S. Pinsker,  {\it Information and information stability of random variables and processes.} Translated and edited by Amiel Feinstein Holden-Day, Inc., San Francisco, Calif.-London-Amsterdam 1964.

\bibitem{Spiliopoulos2014} K. Spiliopoulos, Fluctuation analysis and short time asymptotics for multiple scales diffusion processes. {\it Stoch. Dyn.} 14 (2014), no. 3, 1350026, 22 pp.





\bibitem{Stam1959} A. J. Stam,  Some inequalities satisfied by the quantities of information of Fisher and Shannon. {\it Information and Control} 2 (1959), 101--112.
\bibitem{Suo2018}Y. Suo, J. Tao, W. Zhang,  Moderate deviation and central limit theorem for stochastic differential delay equations with polynomial growth. {\it Front. Math. China} 13 (2018), no. 4, 913--933.

	\end{thebibliography}
\end{document}